\newcommand{\Rmnum}[1]{\expandafter\@slowromancap\romannumeral #1@}
\def\Xint#1{\mathchoice
{\XXint\displaystyle\textstyle{#1}}%
{\XXint\textstyle\scriptstyle{#1}}%
{\XXint\scriptstyle\scriptscriptstyle{#1}}%
{\XXint\scriptscriptstyle\scriptscriptstyle{#1}}%
\!\int}
\def\XXint#1#2#3{{\setbox0=\hbox{$#1{#2#3}{\int}$}
\vcenter{\hbox{$#2#3$}}\kern-.5\wd0}}
\def\dashint{\Xint-}
\newcommand{\compC}{\mathbb{C}}
\newcommand{\realR}{\mathbb{R}}
\newcommand{\intZ}{\mathbb{Z}}
\newcommand{\bigO}{\mathcal{O}}
\newcommand{\Prob}{\mathbb{P}}
\newcommand{\K}{\mathcal{K}}
\newcommand{\dettwo}{\det\nolimits_2}
\newcommand{\Peche}{P\'{e}che\'{e}}
\newcommand{\Weylchamber}[1][N]{\mathbb{W}_{#1}}
\newcommand{\prodprime}{\sideset{}{'}\prod}
\newcommand{\qbinom}[2]{\genfrac{[}{]}{0pt}{}{#1}{#2}_q}
\DeclareMathOperator{\tr}{tr}
\DeclareMathOperator{\loc}{loc}
\DeclareMathOperator{\res}{res}
\DeclareMathOperator{\step}{step}
\DeclareMathOperator{\qTASEP}{\ensuremath{q}-TASEP}
\DeclareMathOperator{\GUE}{GUE}
\DeclareMathOperator{\BBP}{BBP}
\def\Xint#1{\mathchoice
{\XXint\displaystyle\textstyle{#1}}%
{\XXint\textstyle\scriptstyle{#1}}%
{\XXint\scriptstyle\scriptscriptstyle{#1}}%
{\XXint\scriptscriptstyle\scriptscriptstyle{#1}}%
\!\int}
\def\XXint#1#2#3{{\setbox0=\hbox{$#1{#2#3}{\int}$}
\vcenter{\hbox{$#2#3$}}\kern-.5\wd0}}
\def\dashint{\Xint-}
\newtheorem{theorem}{Theorem}[section]
\newtheorem{corollary}[theorem]{Corollary}
\newtheorem{proposition}[theorem]{Proposition}
\theoremstyle{remark}
\newtheorem{remark}{Remark}[section]
\theoremstyle{definition}
\begin{document}
\title{Distributions of a particle's position and their asymptotics in the $q$-deformed totally asymmetric zero range process with site dependent jumping rates}

\author{Eunghyun Lee\thanks{Department of mathematics, Nazarbayev University, Astana, 010000 Kazakhstan, \url{eunghyun.lee@nu.edu.kz}, Supported by the Social Policy Grant of Nazarbayev University.}  \and Dong Wang\thanks{Department of Mathematics, National University of Singapore, 119076 Singapore, \url{matwd@nus.edu.sg}. Support partially by the Singapore AcRF Tier 1 grant R-146-000-217-112.}}
%\date{}
\maketitle

\begin{abstract}
\noindent  In this paper we study the probability distribution of the position of a tagged particle in the $q$-deformed Totally Asymmetric Zero Range Process ($q$-TAZRP) with site dependent jumping rates. For a finite particle system, it is derived from the transition probability previously obtained by Wang and Waugh. We also provide the probability distribution formula for a tagged particle in the $q$-TAZRP with the so-called step initial condition in which infinitely many particles occupy one single site and all other sites are unoccupied. For the $q$-TAZRP with step initial condition, we provide a Fredholm determinant representation for the probability distribution function of the position of a tagged particle, and moreover we obtain the limiting distribution function as the time goes to infinity. Our asymptotic result for $q$-TAZRP with step initial condition is comparable to the limiting distribution function obtained by Tracy and Widom for the $k$-th leftmost particle in the asymmetric simple exclusion process with step initial condition (Theorem 2 in Commun. Math. Phys. 290, 129--154 (2009)).
\end{abstract}

\section{Introduction}

The Zero Range Process (ZRP) is the system of identical particles on a countable set $S$ whose dynamics is described by the following rules (if the sites are homogeneous): if a site $x\in S$ is occupied by $k$ particles, then one of $k$ particles leaves $x$ after an exponential waiting time with rate $g(k)$. The particle leaving $x$ chooses a target site $y$ with probability $p(x,y)$ and then it immediately moves to $y$.  As a special case of the ZRP, the system of $N$ particles on $S=\mathbb{Z}$ with $p(x,x+1) = 1$ and $g(k) = (1-q^{k})(1-q)$ with $q\in (0,1)$, is an \textit{integrable} model originated from the $q$-boson model introduced by Sasamoto and Wadati \cite{Sasamoto-Wadati98}. In this paper, we will call this special case of the ZRP the (spatial-homogeneous) $q$-deformed Totally Asymmetric Zero Range Process ($q$-TAZRP). The integrability of the $q$-TAZRP was studied by Povolotsky \cite{Povolotsky04} and its transition probability was obtained by Korhonen and Lee \cite{Korhonen-Lee14}. The $q$-TAZRP is also a special case of the zero-range chipping model introduced by Povolotsky \cite{Povolotsky13} in which multiple particles are allowed to jump at the same time and a jumping rate depends on the number of particles jumping as well as the  number of particles at the departing site. The $q$-TAZRP can be generalized to let the jumping rates depend on not only the number of particles at the departure site, but also the site $x$ itself, so that the jumping rate at $x$ occupied by $k$ particles is given by
\begin{equation} \label{eq-rate}
  g (k; x) = a_x(1-q^{k}).
\end{equation}
The finite system with the rate function \eqref{eq-rate} is well defined if $a_x > 0$, and we assume that all $a_x$ are in a compact subset of $(0,\infty)$. The transition probability of this model with finitely many particles was obtained by Wang and Waugh \cite{Wang-Waugh16} as an extension of Korhonen and Lee's work \cite{Korhonen-Lee14}. We note that the transition probability can also be derived as a degeneration of the higher spin stochastic six vertex model studied by Borodin and Petrov \cite{Borodin-Petrov16}. Although the $q$-TAZRP with spatial-inhomogeneous jumping rate (that we refer to simply as $q$-TAZRP throughout the paper) is not much studied in literature, its dual model, the $q$-deformed Totally Asymmetric Simple Exclusion Process ($q$-TASEP) with particle-dependent jumping rate, has been extensively studied in \cite{Barraquand15},\cite{Borodin-Corwin-Sasamoto14},  and \cite{Ferrari-Veto15}.

The aim of this paper is two-fold: First for an arbitrary initial condition in the $N$-particle $q$-TAZRP, we find the explicit distribution function for the position of a tagged particle (as the $n$-th rightmost or leftmost particle) at time $t > 0$ if the jumping rates are given by \eqref{eq-rate}. In \cite{Wang-Waugh16} the transition probability for  an $N$-particle system was obtained from which we derive the distribution of a tagged particle's position as a  marginal probability. The result is given in Theorem \ref{main-theorem-1}. We also find the distribution function for a tagged particle's position under the so-called step initial condition which is such that initially infinitely many particles occupy a single site and all other sites are empty, as shown in Theorem \ref{theorem-step}. Under the step initial condition, the distribution of a tagged particle's position is represented by a contour integral of the Fredholm determinant of a certain trace class operator. We also study the asymptotic behaviour of this Fredholm determinant representation when $t$ goes to infinity.  This result is given in Theorem \ref{thm:asymp}. We note that although the distribution of the $m$-th rightmost particle at time $t$ in the limit that both $m$ and $t$ go to infinity can be obtained by the known results for the $q$-TASEP, as discussed in Appendix \ref{sec:TW_limit}, the limiting distribution when $m$ is fixed is new. Since our limiting distribution agrees with that for the Asymmetric Simple Exclusion Process (ASEP) (\cite[Theorem 2]{Tracy-Widom09}), these results may describe a new universal behaviour of interacting particle systems.

\subsection{Definition of the Model} \label{subsec:defn_of_model}

First, we consider a system that consists of $N$ identical particles on $\mathbb{Z}$.  We label particles $1,\dotsc,N$ from the rightmost particle. We assume that if more than one particle occupy the same position, they are ordered vertically such that the label of a particle at a higher position is smaller than the label of a particle at a lower position. We interpret the particle labelled $m$ as the $m$-th rightmost particle. We denote by $x_m(t)$ the position of the $m$-th rightmost particle at time $t$, and $t$ may be omitted if it is not necessary to specify the time. By abuse of notation, we also use $x_m$ to denote the $m$-th particle itself. The state of an $N$-particle system is represented by
\begin{equation} \label{state-1}
  X=(x_1, \dotsc, x_N) \in \Weylchamber := \{(i_1,\dotsc, i_N) \in \mathbb{Z}^N \mid i_1 \geq i_2 \geq \dotsb \geq i_N \}
\end{equation}
and a state at time $t$ is denoted by $X(t) = (x_1(t),  \dotsc, x_N(t))$. Alternatively, a state can be specified by the number of particles at each site. We denote the number of particles at site $k$ (at time $t$) by $n_k$ (or $n_k(t)$).  Hence, a state $X(t)$ is equivalently expressed as $(n_k(t))_{k \in \intZ}$, which satisfies $\sum^{\infty}_{k = -\infty} n_k(t) = N$.

The rules of the model are a specialization of the rules for the general ZRP explained in the beginning of this paper. But since we label the otherwise identical particles, the rules can be described more concretely. We fix $q \in (0,1)$. If a site $x$ is occupied by $k$ particles, then the particle at the top of the particles (the particle itself if a site is occupied by only one particle) jumps to the bottom of the particles at $x+1$ (if there are other particles at $x + 1$) after an exponential time with rate $a_x(1-q^{k})$ where $a_x>0$ for all $x \in \mathbb{Z}$.  In this procedure only the particle at the top may jump after an exponential time and other particles do not respond to the exponential clock. If the local state at $x$ is changed, that is, if the number of particles at $x$ is changed from $k$ to $k'$, then the exponential clock is reset to have the new rate $a_x(1-q^{k'})$ and the particle at the top jumps after a new exponential time.  Also, we assume that all waiting times are independent. We denote by $\Prob_Y$ the probability measure with an initial state $X(0) = Y = (y_1,\dotsc, y_N)$ of the process.

We also consider an infinite system as a limiting case of $N$-particle system. An infinite particle $q$-TAZRP is well defined by \cite[Section 2]{Holley70} if we let the transition rules there be specified as $P(x, y)$ in \cite{Holley70} is equal to $\delta_{x + 1, y}$ and $\varphi(m)$ in \cite{Holley70} depends on $x$ in the way $\varphi(m; x) = a_x(1 - q^m)/m$. (In \cite{Holley70} the ZRP is assumed to be spatial-homogeneous, but it is straightforward to generalize the proof to our spatial-inhomogeneous case.) In plain words, if site $x$ is occupied by $m < \infty$ particles, a particle jumps out of site $x$ at rate $a_x(1 - q^m)$, which is the $\Gamma(m)$ defined in \cite[Section 2]{Holley70}, a particle jumps out of site $x$ to site $x + 1$, and If site $x$ is occupied by infinitely many particles, the rate for one particle to jump out of $x$ to $x + 1$ is naturally defined, corresponding to the $\Gamma(\infty)$ defined in \cite[Section 2]{Holley70}, as $a_x(1 - q^{\infty}) = a_x$. In all $q$-TAZRP models considered in our paper, the infinite particles are labelled by consecutive integers. We assume that at any time $t \geq 0$, among all particles at site $x$, particles are stacked vertically with $x_m$ above $x_n$ if $m < n$, and further assume that the particle on the top (that with the smallest label) exists. Furthermore, we assume that this top particle jumps out of $x$ to $x + 1$, and stays below all existing particles there. Hence at all time $t \geq 0$, $x_m(t) \geq x_n(t)$ if $m < n$. The $q$-TAZRP with step initial condition, which is the infinite particle model analyzed in this paper, is defined in the following subsection.

\subsection{Relationship between $q$-TASEP and $q$-TAZRP, and the height function} \label{subsec:TASEP_and_TAZRP}

In general, a ZRP on $\intZ$ has a particle-spacing duality with a simple exclusion process on $\intZ$ \cite{Spitzer70}. In particular, the $q$-TAZRP considered in our paper is dual to the $q$-TASEP introduced by Borodin and Corwin \cite{Borodin-Corwin13}. The intuitive meaning of this duality is simple: we interpret the number of particles at site $k$ in the $q$-TAZRP as the spacing between the $k$-th and $(k - 1)$-th particles in the $q$-TASEP. Below we describe the duality in precise terms and focus on the construction of the $q$-TASEP from the $q$-TAZRP. The inverse construction can be done in the same way, and we omit it since it is not relevant in our paper.

It is better to consider the duality between $q$-TAZRP and $q$-TASEP with infinitely many particles, rather than the finite-particle system defined above in our paper. Suppose that initially no site is occupied by infinitely many particles and  there are infinitely many particles to the left and to the right of $x$, respectively, for each $x \in \mathbb{Z}$. In this case it is impossible to say the ``rightmost'' particle and the ``leftmost'' particle, and in probability $1$ no site is occupied by infinitely many particles at any finite time $t \geq 0$. Let us label the particles at the initial time $t = 0$ as follows. We find the closest occupied site on $(-\infty,0]$ to the origin, and call a particle at this site $x_1$. If there are more than one particle at this site, since we assume that they are vertically stacked and the top particle exists, we call the particle at the top $x_1$. By the ordering method introduced in section \ref{subsec:defn_of_model} we label the other particles at the same position as that of $x_1$ and the particles to the left of $x_1$ as $x_2, x_3, \dotsc$ consecutively, and label the particles to the right of $x_1$ as $x_0, x_{-1}, \dotsc$ consecutively. Hence, at any time $t \geq 0$, we express a state $X(t)=  (\dotsc, x_{-1}(t), x_0(t), x_1(t), x_2(t), x_3(t), \dotsc)$ as a bi-infinite sequence whose terms are in weakly decreasing order, and $x_{n}(t) \to \mp\infty$ as $n \to \pm\infty$. 

There is an alternative unique parametrization $(n_k(t))_{k \in \intZ}$ with $n_k(t) = \text{\#(particles}$ at site $k$ at $\text{time $t$)} < \infty$ and $\sum^{\infty}_{k = -\infty} n_k(t) = \infty$. Let $X(t)$ and $(n_k(t))_{k \in \intZ}$  denote the state of the particles in the $q$-TAZRP described above. Then we define a bi-infinite strictly decreasing sequence $Y(t) = (y_k(t))_{k \in \intZ}$ recursively by
\begin{equation}\label{305-227}
  \begin{split}
    y_{k - 1}(t) - y_k(t) = n_k(t) + 1, \quad y_0(0) = k_0 \quad \text{for some} \quad k_0 \in \intZ,
\end{split}
\end{equation}
and for $t > 0$
\begin{equation} \label{eq:pos_y_0(t)}
  y_0(t) = {k}_0 + \text{$\#$(particles in the $q$-TAZRP $X(t)$ that move from $0$ to $1$ during time $(0, t)$)}.
\end{equation}
If we interpret $y_k(t)$ as the position of the particle labelled $k$ in an infinite particle system on $\intZ$ at time $t$, the dynamics of this particle system can be described as follows: Each particle has an exponential clock that rings independently such that the clock for the  particle labelled $k$ has rate
\begin{equation*}
a_{k} (1 - q^{n_{k}(t)}) = a_k (1 - q^{y_{k - 1}(t) - y_k(t) - 1}).
\end{equation*}
 When the clock rings, the particle moves one step to the right, see Figure \ref{fig:ZRP_SEP_infinite}. (If the right neighbour site is blocked by another particle, we have that the exponential clock rings at rate $0$, and the particle does not move.) This is exactly the $q$-TASEP model defined in \cite{Borodin-Corwin-Sasamoto14} with infinitely many particles without the leftmost paritcle or the rightmost particle. (In \cite{Borodin-Corwin-Sasamoto14}, the rightmost particle exists, though the particle number can be infinity.)

\begin{figure}[htb]
  \centering
  \includegraphics{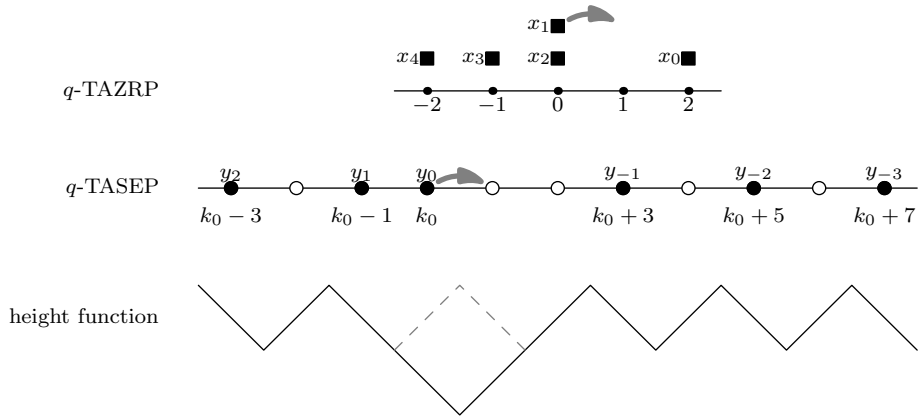}
  \caption{A snapshot of the $q$-TAZRP and the corresponding $q$-TASEP at time $0$. At the bottom is the graph of the height function associated to the $q$-TASEP. In the $q$-TAZRP, the particle on the top of the stack at position $0$ is about to jump, and the corresponding particle $y_0$ jumps in the $q$-TASEP. The jump of $y_0$ changes the height function by flipping the lower $\vee$ corner to the dashed $\wedge$ wedge. Hence if we imagine the height function is for a surface, it grows upward when the $q$-TASEP evolves.}
  \label{fig:ZRP_SEP_infinite}
\end{figure}

If the $q$-TAZRP model has infinitely many particles and the rightmost particle, which is denoted by $x_1(t)$, exists, then $X(t) = (x_k(t))_{k \in \intZ_+} = (x_1(t), x_2(t), \dotsc)$ is uniquely described by $(n_k(t))_{k \in \intZ}$. We also assume that $n_k(t) < \infty$ for all $k \in \intZ$ and $t \geq 0$  as before. The corresponding $q$-TASEP is described by \eqref{305-227} and \eqref{eq:pos_y_0(t)}, and has the feature that sites of $\mathbb{Z}$ far to the right are densely packed with particles at any given time $t$, since $n_k(t) = 0$ for $k \gg 0$. Moreover, if there are  $N < \infty$ particles  in the $q$-TAZRP so that we may denote by $x_1(t)$ and by $x_N(t)$ the positions of the rightmost particle and the leftmost particle, respectively, then again the corresponding $q$-TASEP is described by \eqref{305-227} and \eqref{eq:pos_y_0(t)}. In this case the corresponding $q$-TASEP has infinitely many particles, and sites of $\mathbb{Z}$ far to either side are densely packed with particles at any given time $t$.

We note that in all the forms of the correspondence between $q$-TAZRP and $q$-TASEP described in the above, the  number of particles in the $q$-TASEP is infinite, and we cannot specify  the rightmost particle and the leftmost  particle. But, if in the $q$-TAZRP a unique site, say site $0$, is occupied by infinitely many particles and all sites to its left are empty, then the corresponding $q$-TASEP has the rightmost particle. This correspondence is realized by the $q$-TAZRP with step initial condition, and we explain it below.

Suppose $Y(t)$ gives a $q$-TASEP model. Let the so-called height function $h(x; t)$ associated to $Y(t)$ be a continuous and piecewise linear function in $x$ such that it takes integer values at $k + 1/2$ for $k \in \intZ$, and linearly interpolate between $k - 1/2$ and $k + 1/2$. $h(x; t)$ is defined uniquely by $Y(t)$ that
\begin{equation}
  h(-1/2; t) = \#(\text{particles that passes from $-1$ to $0$ between time $0$ and $t$ in the $q$-TASEP}),
\end{equation}
and for any integer $k$ we have
\begin{equation}
  h(k + 1/2) - h(k - 1/2) =
  \begin{cases}
    -1 & \text{if site $k$ is occupied at time $t$ in the $q$-TASEP}, \\
    1 & \text{if site $k$ is empty at time $t$ in the $q$-TASEP}.
  \end{cases}
\end{equation}
The graph $y = h(x; t)$ in the $xy$ plane is a Dyck path, see Figure \ref{fig:ZRP_SEP_infinite}. It is clear that the Dyck path has a 1-1 correspondence with the $q$-TASEP model, and as time $t$ increases, the Dyck path monotonically moves upward. Actually the move of the Dyck path is in the KPZ universality class, and has been studied by \cite{Borodin-Corwin13}, \cite{Borodin-Corwin-Sasamoto14}, \cite{Ferrari-Veto15}, \cite{Barraquand15} and \cite{Imamura-Sasamoto17}.

A special state of the $q$-TASEP, which can be realized only as an initial condition at time $t = 0$, is that infinitely many particles are labelled as $y_0, y_1, y_2, \dotsc$ such that $y_{i+1} = y_i-1$. The Dyck path corresponding to this configuration of $q$-TASEP has the shape like a step, so this configuration is called the \emph{step initial condition} for $q$-TASEP. Since this configuration of $q$-TASEP is constructed by the configuration of $q$-TAZRP that consists particles $x_1, x_2, x_3, \dotsc$ with $x_k = 0$ for all $k$, we call this configuration also the \emph{step initial condition} for $q$-TAZRP, especially if it is at the initial time $t = 0$.

In the $q$-TASEP with the step initial condition, the generating function $\mathbb{E}(q^{y_m(t)})$ of the $m$-th rightmost particle's position where the probability distribution of $y_m(t)$ is encoded was obtained in  \cite{Borodin-Corwin13} and \cite{Borodin-Corwin-Sasamoto14}. Furthermore, it was shown that an $e_q$-Laplace transform of $q^{y_m(t)}$ is the Fredholm determinant of an integral operator so that the distribution of $y_m(t)$ is obtained from the inversion formula of the $e_q$-Laplace transform. In the $q$-TAZRP, we directly obtain the probability distribution of $x_m(t)$ based on Wang and Waugh's transition probability \cite{Wang-Waugh16} as in Tracy and Widom's approach in \cite{Tracy-Widom08} and \cite{Tracy-Widom08a}.
	
In the end of this subsection we consider the relation between the distribution of particles in the $q$-TAZRP with step initial condition $x_k(0) = 0$ for all $k \geq 1$, and the distribution of particles in the corresponding $q$-TASEP with step initial condition $y_k(0) = -k$ for all $k \geq 0$. At any time $t > 0$, we have that for all integers $x \geq 0$,
\begin{equation}
  \sum^{\infty}_{k = x + 1} n_k(t) = y_x(t) + x.
\end{equation}
Since $x_m(t) > x$ is equivalent to $\sum^{\infty}_{k = x + 1} n_k(t) \geq m$, we have that $x_m(t) > x$ if and only if $y_x(t) + x \geq m$. We conclude that
\begin{equation} \label{eq:relation_q_TASEP_q_TAZRP_prob_distr}
  \Prob_{0^{\infty}}(x_m(t) > x) = \Prob^{\qTASEP}_{\step}(y_x(t) \geq m - x),
\end{equation}
where the two sides are the probabilities that $x_m(t) > x$ and $y_x(t) \geq m - x$ respectively in the $q$-TAZRP/$q$-TASEP with step initial condition given above.

\subsection{Main results}

First we state the result on the distribution function for a single particle $x_n(t)$ in the $N$-particle $q$-TAZRP, under the initial condition that $ Y = (y_1, \dotsc, y_N)$. This result is based on transition probability $P_Y(X;t)$ obtained by Wang and Waugh \cite{Wang-Waugh16}, which is in turn a generalization of the result by Korhonen and Lee \cite{Korhonen-Lee14} in the homogeneous case. We first define notations to be used later in this paper.

First, throughout this paper, we let
\begin{equation}\label{437-34}
  b_x = (1 - q)a_x
\end{equation}
for all $x \in \intZ$. For a state $X = (x_1, \dotsc, x_N)$ expressed in terms of $(n_k)_{k \in \intZ}$, let
\begin{equation} \label{eq:defn_W(X)}
  W(X) = \prod_{k \in \intZ}[n_k]_q!,
\end{equation}
where $[m]_q!$ is the $q$-deformed factorial, defined by
\begin{equation}
  [m]_q! = [1]_q[2]_q\cdots [m]_q = \prod^m_{k = 1} \frac{1-q^{k}}{1-q}.
\end{equation}
Note that all but finitely many $n_k$ are $0$, and the factor $[0]_q! = 1$, so the infinite product in \eqref{eq:defn_W(X)} is well defined.

For a permutation $\sigma \in S_N$, an \emph{inversion} of $\sigma$ is an ordered pair $(\sigma(i),\sigma(j))$ such that $i<j$ and $\sigma(i)>\sigma(j)$. Let
 \begin{equation}
 S(w_{\alpha},w_{\beta}) = -\frac{qw_{\beta} - w_{\alpha}}{qw_{\alpha} - w_{\beta}}
 \end{equation}
where $w_{\alpha}$ and $w_{\beta}$ are complex variabls and
\begin{equation} \label{A-sigma}
  A_{\sigma}(w_1,\dotsc, w_N) = \prod_{\text{$(\beta,\alpha)$ is an inversion of $\sigma$}} S(w_{\alpha},w_{\beta}).
\end{equation}
We define $A_{\sigma}(w_1,\dotsc, w_N) = 1$ if $\sigma$ is the identity permutation. For notational simplicity we define $\prod^{'}$ as an extension of $\prod$. If $f(k)$ is defined for all $k \in \intZ$, then
\begin{equation}
  \sideset{}{'}\prod_{k=m}^n f(k) =
  \begin{cases}
    \displaystyle\prod_{k=m}^nf(k) & \text{if $n \geq m$},\\
    1 & \text{if $n = m - 1$},\\
    \displaystyle\prod_{k=n+1}^{m-1}\frac{1}{f(k)} & \text{if $n < m - 1$}.
  \end{cases}
\end{equation}
The integral sign $\dashint$ is a shorthand for $\frac{1}{2 \pi i} \oint$.
\begin{proposition}\cite{Wang-Waugh16}\label{Wang-Waugh}
  Given the notations in the above, the transition probability of the $q$-TAZRP with rates \eqref{eq-rate} is given by
  \begin{multline}
    P_Y(X;t) = \frac{1}{W(X)} \prod^N_{k=1} \frac{-1}{b_{x_k}} \sum_{\sigma \in S_N} \dashint_{C} dw_1 \cdots \dashint_{C} dw_N A_{\sigma}(w_1, \dotsc, w_N) \\
    \times \prod^N_{j=1} \left[ \prodprime^{x_j}_{k=y_{\sigma(j)}} \left( \frac{b_k}{b_k - w_{\sigma(j)}} \right) e^{-w_jt} \right],
  \end{multline}
  where $C$ is a counter-clockwise circle centered at $0$ with sufficiently large radius enclosing all singularities $b_k$.
\end{proposition}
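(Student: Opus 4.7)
The plan is to verify that the right-hand side of Proposition \ref{Wang-Waugh} satisfies (a) the forward Kolmogorov equation of the $q$-TAZRP for $t > 0$ --- whose gain and loss terms involve the site rates $a_k(1 - q^{n_k})$ as described in Section \ref{subsec:defn_of_model} --- and (b) the initial condition $P_Y(X; 0) = \delta_{X, Y}$, adapting the Bethe-ansatz strategy of Tracy--Widom and Korhonen--Lee to the spatially inhomogeneous rates \eqref{eq-rate}.

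A direct computation shows that the one-particle building block
\begin{equation*}
g_y(x, t; w) := \frac{-1}{b_x} \prodprime_{k = y}^{x} \frac{b_k}{b_k - w} \, e^{-wt}
\end{equation*}
satisfies the single-site master equation $\partial_t g = b_{x-1} g(x-1, t; w) - b_x g(x, t; w)$ for every $x \in \intZ$, the $\prodprime$ convention making the identity valid regardless of the sign of $x - y$. Consequently, on the interior of $\Weylchamber$ where all coordinates $x_j$ are distinct and the forward equation decouples into $N$ independent single-site equations, the candidate
\begin{equation*}
\sum_{\sigma \in S_N} A_\sigma(w) \prod_{j=1}^N g_{y_{\sigma(j)}}(x_j, t; w_{\sigma(j)}),
\end{equation*}
integrated over the $w_j$-contours, solves the bulk forward equation for \emph{any} choice of weights $A_\sigma(w)$.

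The main obstacle is reconciling the ansatz with the boundary: when $n_k \geq 2$, the genuine TAZRP rate $a_k(1 - q^{n_k})$ is strictly smaller than the sum $n_k \cdot b_k$ of $n_k$ independent single-particle rates, and the deficit must be absorbed by the permutation sum. Following the Bethe-ansatz template, the cancellation at a pairwise coincidence $x_j = x_{j+1}$ reduces to the two-body relation
\begin{equation*}
(qw_\alpha - w_\beta)\, u_{\ldots \alpha \beta \ldots} + (qw_\beta - w_\alpha)\, u_{\ldots \beta \alpha \ldots} = 0,
\end{equation*}
which forces $A_{\sigma'}/A_\sigma = S(w_\alpha, w_\beta)$ whenever $\sigma'$ differs from $\sigma$ by an adjacent transposition of values $\alpha, \beta$; propagating along a reduced word yields the inversion product \eqref{A-sigma}. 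The delicate step --- and the principal hurdle --- is showing that this pairwise relation, combined with the normalization $W(X)^{-1}$, reproduces $a_k(1 - q^{n_k})$ for arbitrary pile-up sizes $n_k \geq 3$. This rests on a symmetrization identity collapsing $\sum_{\tau \in S_{n_k}} A_\tau$, taken over permutations acting among the $n_k$ indices at site $k$, to a clean $q$-factorial matched precisely by the $[n_k]_q!$ in $W(X)$.

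For (b), set $t = 0$. Each factor $g_{y_{\sigma(j)}}(x_j, 0; w_{\sigma(j)})$ has poles in $w_{\sigma(j)}$ only at those $b_k$ with $k$ between $y_{\sigma(j)}$ and $x_j$, all enclosed by $C$. Evaluating the multiple residues inductively --- say, peeling the $w_j$-integrals from $j = N$ down --- and using that $\sigma$ is a bijection, one finds that nonzero contributions can arise only when $X = Y$ as elements of $\Weylchamber$. On this locus, a further computation using the $A_\sigma$-structure together with $W(X)^{-1}$ collapses the surviving sum over permutations to exactly $1$, yielding $\delta_{X, Y}$. Combining (a) and (b) completes the verification.
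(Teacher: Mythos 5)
The paper does not prove Proposition~\ref{Wang-Waugh} at all: it is quoted verbatim from \cite{Wang-Waugh16}, so there is no internal proof to compare against. Your outline is the strategy actually used in that reference (and in \cite{Korhonen-Lee14} for the homogeneous case): verify the free evolution equation in the bulk, impose boundary conditions at coincident coordinates to recover the true generator with rates $a_k(1-q^{n_k})$, and check the initial condition by residues. Your one fully executed step is correct: with the $\prodprime$ convention one indeed has $\prodprime_{k=y}^{x} = \prodprime_{k=y}^{x-1}\cdot\frac{b_x}{b_x-w}$ for every $x,y$, from which $\partial_t g = b_{x-1}g(x-1,t;w) - b_x g(x,t;w)$ follows, and the quoted two-body relation does produce exactly the $S(w_\alpha,w_\beta)$ of \eqref{A-sigma}.

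As a proof, however, this is a road map rather than an argument: all three points where the work actually lies are named but not carried out. (i) You never write down the boundary conditions that convert the master equation with loss rate $a_k(1-q^{n_k})$ and the corresponding gain terms into the free equation; for a ZRP these are imposed on the extension of $u$ off the Weyl chamber and are not the same as in ASEP, so ``adapting the Tracy--Widom template'' is not automatic. (ii) The assertion that the two-body relation plus the $W(X)^{-1}$ normalization handles arbitrary pile-ups $n_k\geq 3$ is precisely the integrability statement that must be proved; invoking ``a symmetrization identity'' (essentially \eqref{eq:basic_id}) without verifying that it matches the $n_k$-body boundary condition leaves the central step open. (iii) For the initial condition, the individual $w_j$-integrals do \emph{not} factor for $\sigma\neq\mathrm{id}$ because $A_\sigma$ couples all the variables, so ``peeling residues from $j=N$ down'' requires a genuine inductive lemma (showing non-identity terms either vanish or combine, via $\sum_\sigma A_\sigma = [n]_q!\,B_n$, into the factor $W(X)$ when $X=Y$ is degenerate). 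None of these steps would \emph{fail} --- the route is sound and is the published one --- but each must be supplied before the verification is complete.
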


The first main result of this paper is the marginal distribution for the $n$-th particle's position at time $t > 0$. We provide the probability in two forms which are reminiscent of Tracy and Widom's results in \cite[Theorem 5.1 and 5.2]{Tracy-Widom08} for the ASEP.   First define for any $r$ variables $w_1, \dotsc, w_r$
\begin{equation}\label{433-34}
  B_r(w_1, \dotsc, w_r) = \prod_{1 \leq i < j \leq r} \frac{w_i - w_j}{qw_i - w_j}.
\end{equation}
Then for the subset $S=\{s_1,\dotsc, s_r\} \subseteq \{1,2,\dotsc, N\}$ with $\lvert S \rvert = r$, we define for $n \leq r$
\begin{equation}
  \begin{aligned}
    c_S(n) = {}& (-1)^n q^{\frac{n(n - 1)}{2} - nr + \sum^r_{i = 1} s_i} \qbinom{r - 1}{n - 1}, \\
    \tilde{c}_S(n) = {}& (-1)^n q^{\frac{n(n - 1)}{2} + \frac{r(r - 1)}{2} + \sum^r_{i = 1} s_i} \qbinom{r - 1}{n - 1},
  \end{aligned}
\end{equation}
where $q$-binomial coefficient $\qbinom{n}{k}$ is defined by
\begin{equation}
  \qbinom{n}{k} = \frac{[n]_q!}{[k]_q![n-k]_q!}.
\end{equation}
Let
\begin{equation}
  I(w_{s_1}, w_{s_2}, \dotsc, w_{s_r};M, t) = \prod_{1 \leq i < j \leq r} B_r(w_{s_1}, \dotsc, w_{s_r}) \prod^r_{i=1} \left[ \prodprime^M_{k = y_{s_i}} \left( \frac{b_k}{b_k - w_{s_i}} \right) e^{-w_{s_i}t} \right].
\end{equation}
\begin{theorem} \label{main-theorem-1}
  Given the notations in the above,
  \begin{multline} \label{eq:distr_X_m}
    \Prob_Y(x_n(t) > M) = \sum^N_{r = n} (-1)^r \\
    \times \sum_{S = \{ s_1 , \dotsc, s_r \} \subseteq \{ 1, \dotsc, N \} \text{ and } \lvert S \rvert = r} (-1)^r c_S(n) \dashint_{{C}}\frac{dw_{s_1}}{w_{s_1}}\cdots \dashint_{{C}}\frac{dw_{s_r}}{w_{s_r}}~I(w_{s_1}, \dotsc, w_{s_r};M, t),
  \end{multline}
  where the contour $C$ is a positively oriented circle centred at $0$ and enclosing all $b_k$.
  \begin{multline} \label{eq:distr_X_N-m+1}
    \Prob_Y(x_{N - n + 1}(t)\leq M) = \sum^N_{r = n} q^{-rN} \\
    \times \sum_{S = \{ s_1 , \dotsc, s_r \} \subseteq \{ 1, \dotsc, N \} \text{ and } \lvert S \rvert = r} \tilde{c}_S(n) \dashint_{\tilde{C}_1} \frac{dw_{s_1}}{w_{s_1}} \cdots \dashint_{\tilde{C}_r} \frac{dw_{s_r}}{w_{s_r}}~I(w_{s_1}, \dotsc, w_{s_r};M, t),
  \end{multline}
  where the contour $\tilde{C}_j$ is a positively oriented simple closed curve which encloses all $b_k$ but does not enclose $0$, and $\tilde{C}_j$ encloses $q\tilde{C}_i$ if $j>i$.
\end{theorem}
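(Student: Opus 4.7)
The plan is to sum the transition probability $P_Y(X;t)$ from Proposition \ref{Wang-Waugh} over all configurations $X \in \Weylchamber$ compatible with the event, closely paralleling the derivation of marginal distributions for ASEP by Tracy and Widom in \cite{Tracy-Widom08, Tracy-Widom08a}. The two formulas express the same probability in two equivalent ways; I would derive the first directly and obtain the second from the first by contour deformation.

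For \eqref{eq:distr_X_m}, I would start with the decomposition
\begin{equation*}
\Prob_Y(x_n(t) > M) = \sum_{r=n}^{N} \sum_{X \in \Weylchamber} \mathbf{1}\bigl(\#\{j : x_j > M\} = r\bigr)\, P_Y(X;t),
\end{equation*}
substitute Proposition \ref{Wang-Waugh}, and interchange the sum over $X$ with the permutation sum and the $N$-fold contour integral; this interchange is justified by absolute convergence on the compact contour $C$ when its radius is sufficiently large. For each $\sigma \in S_N$, parameterising $X$ by the occupation numbers $(n_k)$, the $(N-r)$ factors corresponding to particles at sites $\leq M$ collapse into site-local $q$-geometric sums that are closed form by the $q$-exponential identity together with the reciprocal convention built into $\prodprime$. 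Indexing the surviving $r$ integration variables $w_{\sigma(j)}$ by the subset $S = \{s_1 < \cdots < s_r\} \subset \{1,\dotsc,N\}$, what remains is the product $\prod_{i=1}^r \prodprime_{k=y_{s_i}}^M \bigl(b_k/(b_k - w_{s_i})\bigr)$ together with a sum over the permutations compatible with $S$.

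The residual permutation sum is reorganised using the standard Tracy--Widom symmetrisation identity, which produces the Bethe-ansatz factor $B_r(w_{s_1},\dotsc, w_{s_r})$, combined with a $q$-Vandermonde-type summation that repackages the outer sum over $r \geq n$ into the $q$-binomial $\qbinom{r-1}{n-1}$; the remaining powers of $q$ and the sign $(-1)^n$ entering $c_S(n)$ emerge from bookkeeping of particle labels and from the $\prod_k(-1/b_{x_k})$ factor in Proposition \ref{Wang-Waugh}. Assembling all pieces yields \eqref{eq:distr_X_m}. To derive \eqref{eq:distr_X_N-m+1}, I would use the complementary-event identity $\Prob_Y(x_{N-n+1}(t) \leq M) = 1 - \Prob_Y(x_{N-n+1}(t) > M)$ together with formula \eqref{eq:distr_X_m}, and deform the single large contour $C$ inward to the nested family $\tilde{C}_j$: in shrinking one picks up residues at $w_{s_j}=0$ and at the poles of $B_r$ at $qw_{s_i} = w_{s_j}$, for which the nesting condition $\tilde{C}_j \supset q\tilde{C}_i$ when $j > i$ is precisely tailored. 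This residue calculation transforms $c_S$ into $\tilde{c}_S$ and produces the overall factor $q^{-rN}$.

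The principal obstacle is the symmetrisation identity that collapses the $\sigma$-sum cleanly into $B_r$ with exactly the $q$-binomial coefficient $\qbinom{r-1}{n-1}$. In the ASEP setting Tracy and Widom prove the analogous identity by induction on $r$ using repeated contour deformations and a basic $q$-identity of Cauchy; here one must also track the site-dependent weights $b_k$ and the normalisation $W(X) = \prod_k [n_k]_q!$, but no essentially new idea beyond Tracy--Widom's should be required. Once this identity is in hand, the geometric summations, $q$-Vandermonde bookkeeping, and the contour deformations leading to \eqref{eq:distr_X_N-m+1} are routine.
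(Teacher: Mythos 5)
Your overall strategy (sum $P_Y(X;t)$ over configurations, split according to how many particles lie above $M$, collapse the permutation sum, and collect $q$-binomials) is the right shape and is essentially what the paper does. But the outline has genuine gaps at the two places where the real work happens, and one of its central claims is incorrect as stated.

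First, the configuration sums do \emph{not} factor into ``site-local $q$-geometric sums in closed form.'' The positions are coupled by the ordering constraint $x_1 \geq \dotsb \geq x_N$ and by the multiplicity weight $1/W(X) = 1/\prod_k [n_k]_q!$, and the permutation factor $A_\sigma$ entangles all variables. The paper's actual engine consists of two summation identities (Propositions \ref{thm:left_most} and \ref{thm:right_most}), each proved by induction on $N$ via a telescoping trick together with a nontrivial determinant evaluation (the computation of $\Gamma(w_1,\dotsc,w_N;l)$ and $\Sigma_m(w_1,\dotsc,w_N;l)$); the ``easy'' symmetrization $\sum_\sigma A_\sigma = [N]_q! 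B_N$ that you identify as the principal obstacle is only one ingredient. Moreover, the sum over particles at sites $\leq M$ runs over a half-infinite range, and the telescoping boundary term at $-\infty$ vanishes only after the contours are deformed to the nested family $\tilde{C}_j$ that do \emph{not} enclose $0$. This is precisely where the nested contours in the theorem come from; your outline provides no mechanism for their appearance. A related structural point you miss: the two identities must be proved with an arbitrary auxiliary function $F(w_1,\dotsc,w_N)$ in the integrand, because to treat a general $n$ one applies the ``$>M$'' identity to the variables indexed by $I_l^c$ with $F$ taken to be the entire unsummed contribution of the variables indexed by $I_l$ (including the cross factor $G_{I_l}$), and then the ``$\leq M$'' identity inside. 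Without this composability the two halves of the configuration sum cannot be decoupled.

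Second, after these identities one lands on a heterotypic integral \eqref{eq:hetero_integral_P^{(N-l)}_Y} with some variables on $C$ and some on nested contours; \emph{both} formulas of the theorem then require converting this to a homogeneous integral by residues and resumming over the subsets $I_l$ via the determinant identity behind \eqref{eq:t^l_coeff_useful}. The coefficient $\qbinom{r-1}{n-1}$ arises from the partial alternating sum $\sum_{l}(-1)^l q^{l(l-1)/2}\qbinom{r}{l}$ (via \cite[Corollary 10.2.2(c)]{Andrews-Askey-Roy99}), not from a $q$-Vandermonde repackaging of the outer sum over $r$. Finally, your plan to get \eqref{eq:distr_X_N-m+1} from \eqref{eq:distr_X_m} by complementation and a single contour deformation is viable in principle (the paper records the needed conversion identities \eqref{eq:identical_in_nested}--\eqref{eq:nested_in_identical}), but it is not a one-shot residue computation ``transforming $c_S$ into $\tilde{c}_S$'': each $r$-fold large-contour integral expands into nested-contour integrals over all subsets of all sizes $\leq r$, and the terms must be recollected and the $q$-binomial sums redone. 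The paper avoids this by deriving both formulas directly from the common heterotypic integral.
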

We note that the $n = N$ case of \eqref{eq:distr_X_N-m+1}, which is stated separately in Proposition \ref{prop:rightmost}, is similar to the $q$-monent formulas for the $q$-TASEP and more generally the $q$-Whittaker processes, see \cite[Proposition 3.5]{Borodin-Corwin13} and \cite[Theorem 2.11]{Borodin-Corwin-Sasamoto14}. The $n = N$ case of \eqref{eq:distr_X_m}, which is stated separately in Proposition \ref{prop:leftmost}, generalizes \cite[Theorem 3.1]{Korhonen-Lee14} that is the homogeneous case of Proposition \ref{prop:leftmost}.

If the initial condition $Y$ is  $Y = 0^N = (0, \dotsc, 0)$, then it is clear that the formulas \eqref{eq:distr_X_m} and \eqref{eq:distr_X_N-m+1} can be simplified. Moreover, if we consider the limiting case of $0^N = (0, \dotsc, 0)$, that is, the step initial condition, $Y = (y_1, y_2, \dotsc) = 0^{\infty} := (0, 0, \dotsc)$, then the probability $\Prob_{0^{\infty}}(x_m(t) > M)$ can be expressed by a contour integral of a Fredholm determinant. To this end, we define the integral operator $K_{M, t}$ from $L^2(\Gamma)$ to $L^2(\Gamma)$ where $\Gamma$ is a large enough contour enclosing $0$, such that
\begin{equation} \label{eq:op_K_Mt}
  (K_{M,t}f)(w) = \frac{1}{2\pi i} \oint_{\Gamma} K_{M,t}(w,w')f(w')~dw'
\end{equation}
with kernel
\begin{equation} \label{eq:kernel_K_Mt}
  K_{M,t}(w,w') = \frac{e^{-wt}}{qw'-w}\prod_{k=0}^M \left( \frac{b_k}{b_k - w} \right)
\end{equation}
whose poles $b_0, b_1, \dotsc, b_M$ are enclosed in $\Gamma$, and the scaled down contour $q\Gamma$ is enclosed in $\Gamma$. It is clear that $K_{M, t}$ is a trace class operator and the Fredholm determinant $\det(I + \zeta K_{M, t})$ is well defined for all $\zeta \in \compC$. Here we do not need to specify the shape of $\Gamma$, since later we only need the Fredholm determinant $\det(I + \zeta K_{M, t})$ that is invariant when $\Gamma$ is continuously deformed, as shown in \cite[Proposition 1]{Tracy-Widom09}.

We note that the integral operator $K_{M, t}$ appears also in the study of $q$-TASEP in the so-called Cauchy-type determinant, see \cite[Proposition 3.10]{Borodin-Corwin-Sasamoto14}, and see \cite[Theorem 3.23]{Borodin-Corwin13} for an analogous ``large contour formula'' for the $q$-Whittaker processes.
\begin{theorem} \label{theorem-step}
For the $q$-TAZRP,
\begin{equation} \label{theorem-1}
  \Prob_{0^{\infty}}(x_m(t) > M) = \frac{1}{2\pi i} \oint_C \frac{d\zeta}{\zeta} \frac{\det(I + \zeta K_{M,t})}{(1- \zeta)(1-q\zeta)\cdots (1-q^{m-1}\zeta)},
\end{equation}
where $C$ is a positively oriented circle centred at $0$ large enough to enclose that all singularities $1, q^{-1}, \dotsc, q^{1 - m}$.
\end{theorem}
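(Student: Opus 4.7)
The plan is to start from the marginal formula in Theorem \ref{main-theorem-1} for the $N$-particle step initial condition $Y=0^N$, take $N\to\infty$, and then recognize the resulting series as the Fredholm-determinant contour integral asserted in Theorem \ref{theorem-step}.

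First I would specialize \eqref{eq:distr_X_m} to $Y=0^N$. Because every $y_{s_i}$ vanishes, the integrand $I(w_{s_1},\dotsc,w_{s_r};M,t)$ depends on the subset $S\subseteq\{1,\dotsc,N\}$ only through a relabelling of the dummy variables, which disappears after integration over the common contour $C$. The only $S$-dependent factor in $c_S(n)$ is $q^{\sum_{i\in S}i}$, so the inner sum collapses via the classical identity
\[
\sum_{\substack{S\subseteq\{1,\dotsc,N\}\\|S|=r}}q^{\sum_{i\in S}i}=q^{r(r+1)/2}\qbinom{N}{r},
\]
leaving a single sum over $r$ with an explicit $r$-th summand. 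I would then pass to $N\to\infty$ using $\qbinom{N}{r}\to 1/(q;q)_r$ together with a dominated-convergence argument based on the uniform decay of $\prod_{k=0}^{M}b_k/(b_k-w)$ on a large circle, obtaining
\[
\Prob_{0^{\infty}}(x_m(t)>M)=(-1)^m\sum_{r\geq m}\frac{q^{\binom{m}{2}-mr+r(r+1)/2}\qbinom{r-1}{m-1}}{(q;q)_r}\mathcal J_r,
\]
where $\mathcal J_r=\dashint\cdots\dashint B_r(w)\prod_{i=1}^r\bigl[\prod_{k=0}^{M}b_k/(b_k-w_i)\bigr]e^{-w_it}\,dw_i/w_i$.

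Separately I would expand $\det(I+\zeta K_{M,t})=\sum_{r\ge 0}(\zeta^r/r!)T_r$ with $T_r=\int\det[K_{M,t}(w_i,w_j)]_{i,j=1}^r\prod dw_i/(2\pi i)$ and interchange with the outer $\zeta$-integration. The remaining one-dimensional residue problem is
\[
\mathcal D_r=\frac{1}{2\pi i}\oint_C\frac{\zeta^{r-1}}{(\zeta;q)_m}\,d\zeta,
\]
which by partial fractions of $1/(\zeta;q)_m$ and the $q$-binomial theorem evaluates to $\mathcal D_r=0$ for $1\leq r<m$ and $\mathcal D_r=(-1)^m q^{\binom{m}{2}-(m-1)r}\qbinom{r-1}{m-1}$ for $r\geq m$. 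The $r$-dependence of $\mathcal D_r$ matches that of the coefficients in the series for $\Prob_{0^{\infty}}(x_m(t)>M)$, so term-by-term comparison reduces Theorem \ref{theorem-step} to the single kernel identity
\[
\frac{T_r}{r!}=\frac{q^{\binom{r}{2}}}{(q;q)_r}\,\mathcal J_r\qquad(r\geq 1).
\]

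I expect this last identity to be the main obstacle. To prove it I would factor $\det[K_{M,t}(w_i,w_j)]=\prod_i f(w_i)\cdot\det[1/(qw_j-w_i)]$ with $f(w)=e^{-wt}\prod_{k=0}^{M}b_k/(b_k-w)$ and apply the Cauchy determinant identity to evaluate $\det[1/(qw_j-w_i)]$ as an explicit rational function in the $w_i$ involving $\prod_{i<j}(w_i-w_j)^2$. I would then exploit the symmetry of the $r$-fold contour in the $w_i$ to symmetrize $B_r(w)$ and invoke the Hall--Littlewood-type symmetrization identity
\[
\sum_{\sigma\in S_r}\prod_{1\leq i<j\leq r}\frac{w_{\sigma(i)}-w_{\sigma(j)}}{qw_{\sigma(i)}-w_{\sigma(j)}}=[r]_q!\prod_{1\leq i<j\leq r}\frac{(w_i-w_j)(w_j-w_i)}{(qw_i-w_j)(qw_j-w_i)},
\]
provable by induction on $r$ or as a degeneration of the Macdonald constant-term identity for type $A$. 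Combining the two expressions for the symmetric integrand and simplifying the resulting $q$-Pochhammer prefactors using $(q;q)_r=(1-q)^r[r]_q!$ yields the required identity for $T_r/r!$, completing the proof.
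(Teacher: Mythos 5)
Your overall architecture is the same as the paper's in its first half (specialize \eqref{eq:distr_X_m} to $Y=0^N$, collapse the sum over $S$ via $\sum_{|S|=r}q^{\sum_{i\in S}i}=q^{r(r+1)/2}\qbinom{N}{r}\to q^{r(r+1)/2}/(q;q)_r$, pass to $N\to\infty$), and differs in the second half: the paper converts the series $\sum_r(\cdots)\mathcal J_r$ into $\det(I+\xi^{-1}K_{M,t})$ by citing \cite[Propositions 3.9 and 3.16]{Borodin-Corwin13} and only then performs the $\zeta$-integral via the $q$-binomial generating function, whereas you expand the Fredholm determinant first, evaluate the $\zeta$-residues $\mathcal D_r$ (your formula for $\mathcal D_r$ is correct), and prove the needed determinant identity from scratch via the Cauchy determinant and the symmetrization identity. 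That self-contained route is legitimate and is essentially a direct proof of the Borodin--Corwin identities in this special case; your symmetrization identity is correct and is equivalent to \eqref{eq:basic_id}.

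There is, however, a concrete sign problem. Your series
\begin{equation*}
  \Prob_{0^{\infty}}(x_m(t)>M)=(-1)^m\sum_{r\geq m}\frac{q^{\binom{m}{2}-mr+r(r+1)/2}\qbinom{r-1}{m-1}}{(q;q)_r}\,\mathcal J_r
\end{equation*}
is missing a factor $(-1)^r$ in each term. You obtained it by reading \eqref{eq:distr_X_m} literally, where the two displayed factors $(-1)^r$ cancel; but that display contains a typo: the correct formula carries a single $(-1)^r$, as one checks from the derivation at the end of Section 2.1.3, from \eqref{eq:finite_n^N}, or most simply from the $N=n=1$ case, where the literal reading of \eqref{eq:distr_X_m} gives $-\dashint_C\frac{dw}{w}\prodprime_{k=y_1}^{M}\frac{b_k}{b_k-w}e^{-wt}$, contradicting Proposition \ref{prop:leftmost}. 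Correspondingly, your target kernel identity $T_r/r!=q^{\binom{r}{2}}\mathcal J_r/(q;q)_r$ is false and cannot be established by the Cauchy-determinant computation you outline: since the diagonal of $\prod_{i,j}(qw_j-w_i)$ contributes $(q-1)^r\prod_i w_i$, one finds
\begin{equation*}
  \det\Bigl[\frac{1}{qw_j-w_i}\Bigr]_{i,j=1}^{r}=\frac{q^{\binom{r}{2}}}{(q-1)^r[r]_q!\prod_i w_i}\sum_{\sigma\in S_r}B_r(w_{\sigma(1)},\dotsc,w_{\sigma(r)}),
  \qquad\text{hence}\qquad
  \frac{T_r}{r!}=\frac{(-1)^r q^{\binom{r}{2}}}{(q;q)_r}\,\mathcal J_r .
\end{equation*}
The two sign slips cancel, so your program does reach \eqref{theorem-1} once both are corrected; but as written, the ``single kernel identity'' to which you reduce the theorem is unprovable, and carrying out your own computation faithfully would appear to refute the statement. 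Two smaller points: the $r=0$ term of the Fredholm expansion also needs $\mathcal D_0=0$ (true, by decay at infinity), and the term-by-term interchange of the $\zeta$-integral with the Fredholm series should be justified by the uniform convergence of that series for $\zeta$ on the compact contour $C$.
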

Here we remark that the probability $\Prob_{0^{\infty}}(x_m(t) > M)$ depends on the values of $b_k$ with $0 \leq k \leq M$, but not the order of them. This symmetry is not obvious from the definition of the model.

Below we give the asymptotics of the probability $\Prob_{0^{\infty}}(x_m(t) > M)$ as  $t \to \infty$ when $m$ is fixed and $M$ is scaled accordingly. We consider only a special case that all but a fixed number of $b_k$ are equal. Since $\Prob_{0^{\infty}}(x_m(t) > M)$ is invariant if $b_k$ and $b_j$ ($0 \leq k, j \leq M$) are swapped, we may assume the spiked $b_k$ are $b_0, b_1, \dotsc, b_l$. This setting is analogous to the $q$-TASEP model with all but a fixed number of particles having the same speed parameter, as considered in \cite{Barraquand15}.
\begin{theorem} \label{thm:asymp}
  Suppose
  \begin{equation} \label{eq:parameter_specification}
    M = n + l + 1, \quad t = n - \tau \sqrt{n} \quad b_k = \frac{\beta_k}{\sqrt{n}} \quad \text{for $k = 0, 1, \dotsc, l$} \quad \text{and} \quad b_k = 1 \quad \text{for all $k > l$}.
  \end{equation}
 Let $l, \beta_0, \dotsc, \beta_l, \tau$ be fixed. Then, as $n \to \infty$, we have uniformly for $\zeta$ in any compact subset of $\compC$,
  \begin{equation} \label{eq:uniform_conv_K_to_K}
    \lim_{n \to \infty} \left. \det(1 + \zeta K_{M, t}) \right\rvert_{M = n + l + 1 \text{ and } t = n - \tau \sqrt{n}} = \det(I + \zeta \K),
  \end{equation}
  where $K_{M, t}$ is defined in \eqref{eq:op_K_Mt} and \eqref{eq:kernel_K_Mt} with parameters specified in \eqref{eq:parameter_specification}, and $\K$ is the integral operator on the infinite vertical contour $\Gamma_{\infty} = \{ -1 - yi \mid y \in \realR \}$, oriented downward, with kernel
  \begin{equation} \label{eq:defn_K(z, w)}
    \K(z, w) = \K_{\tau; \beta_0, \dotsc, \beta_l}(z, w) =  \frac{e^{w^2/2 + \tau w}}{w - qz} \gamma(w), \quad \text{where} \quad \gamma(w) = \prod^l_{k = 0} \frac{\beta_k}{\beta_k - w}.
  \end{equation}
  Hence
  \begin{equation} \label{eq:convergence_of_contour_integral_in_det_K}
    \lim_{n \to \infty} \left. \Prob_{0^{\infty}}(x_m(t) > M) \right\rvert_{M = n + l + 1 \text{ and } t = n - \tau \sqrt{n}} = \frac{1}{2\pi i} \oint_C \frac{d\zeta}{\zeta} \frac{\det(I + \zeta \K)}{(1- \zeta)(1-q\zeta)\cdots (1-q^{m-1}\zeta)},
  \end{equation}
  where $C$ is the same as in Theorem \ref{theorem-step}.
\end{theorem}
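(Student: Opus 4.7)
The plan is to prove \eqref{eq:uniform_conv_K_to_K} by rescaling the integration variables so that the prelimit kernel converges pointwise to the limit kernel, and then to deduce \eqref{eq:convergence_of_contour_integral_in_det_K} by passing the limit through the outer $\zeta$-integral in \eqref{theorem-1}. The first move is the change of variables $w = \tilde w/\sqrt{n}$. This sends $\Gamma$ to $\sqrt{n}\cdot\Gamma$, relocates the spiked poles $b_k = \beta_k/\sqrt{n}$ to $\beta_k$, collapses the bulk poles $b_k = 1$ (for $l < k \le n + l + 1$) into a single $(n+1)$-fold pole at $\tilde w = \sqrt{n}$, and transforms the kernel into
\begin{equation*}
\tilde K_n(\tilde w, \tilde w') = \frac{1}{q\tilde w' - \tilde w} \, e^{-\tilde w(n-\tau\sqrt{n})/\sqrt{n}} \prod_{k=0}^{l} \frac{\beta_k}{\beta_k - \tilde w} \cdot \left(1 - \frac{\tilde w}{\sqrt{n}}\right)^{-(n+1)}.
\end{equation*}
The spiked factor already equals $\gamma(\tilde w)$; Taylor expanding $\log(1-\tilde w/\sqrt{n})$ gives
\begin{equation*}
-\tilde w\sqrt{n} + \tilde w\tau - (n+1)\log\!\left(1 - \frac{\tilde w}{\sqrt{n}}\right) = \frac{\tilde w^2}{2} + \tau\tilde w + O(n^{-1/2})
\end{equation*}
uniformly on compact subsets of $\compC$ bounded away from $\sqrt{n}$, so that $\tilde K_n(\tilde w, \tilde w') \to -\K(\tilde w, \tilde w')$ pointwise (the overall sign is absorbed in the $\zeta^r$ of the Fredholm series).

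To replace the rescaled contour by $\Gamma_{\infty}$, I would choose the original $\Gamma$ as a rectangle with left edge on $\mathrm{Re}\, w = -1/\sqrt{n}$ (so its rescaled image is exactly $\Gamma_{\infty}$), with right edge fixed at $\mathrm{Re}\, w = 2$, and with horizontal edges at $\mathrm{Im}\, w = \pm H$ for some $H > 1$. Invariance of the Fredholm determinant under analytic deformation within the holomorphic domain of the kernel then permits retaining only the left edge in the limit. On the receding right edge, $\lvert 1 - \tilde w/\sqrt{n}\rvert \ge 1$ combines with $\lvert e^{-\tilde w\sqrt{n}}\rvert = e^{-2n}$ to yield exponential decay; on the horizontal edges, $\lvert 1 - \tilde w/\sqrt{n}\rvert \ge H$ produces a factor $H^{-(n+1)}$ that dominates the growth of $e^{-\tilde w\sqrt{n}}$. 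On the left edge, parameterising $\tilde w = -1 - iy$, the real part of the asymptotic phase $\tilde w^2/2 + \tau\tilde w$ equals $\tfrac{1}{2} - \tau - \tfrac{y^2}{2}$, producing a Gaussian in $y$ that provides an $n$-uniform dominating function and makes $\K$ integrable on $\Gamma_{\infty}$.

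With kernel convergence on $\Gamma_{\infty}$ established, I pass to Fredholm determinants via the series
\begin{equation*}
\det(I + \zeta \tilde K_n) = \sum_{r\geq 0} \frac{\zeta^r}{r!(2\pi i)^r}\int_{\Gamma_{\infty}^r} \det\bigl[\tilde K_n(\tilde w_i, \tilde w_j)\bigr]_{i,j=1}^{r}\, d^r\tilde w.
\end{equation*}
Hadamard's inequality applied to the $r\times r$ kernel matrix, combined with the uniform Gaussian bound on $\lvert \tilde K_n\rvert$ and a uniform lower bound on $\lvert q\tilde w' - \tilde w\rvert$ obtained by keeping $q\Gamma_{\infty}$ a positive distance from $\Gamma_{\infty}$, produces an integrable dominating function for each $r$ and a summable majorant of the form $C^r r^{r/2}/r!$ for the series. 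Dominated convergence then gives termwise and hence total convergence of the determinant, uniformly in $\zeta$ on compacta; passing this uniform convergence through the outer $\zeta$-contour in \eqref{theorem-1} yields \eqref{eq:convergence_of_contour_integral_in_det_K}.

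The main obstacle I foresee is quantifying the cancellation $e^{-\tilde w\sqrt{n}}(1-\tilde w/\sqrt{n})^{-(n+1)} \sim e^{\tilde w^2/2}$ uniformly along the entire deformed contour, not only near the saddle point at $\tilde w \approx -\tau$. Off the saddle the individual factors are large, and careful steepest-descent bookkeeping, in the spirit of \cite{Borodin-Corwin-Sasamoto14}, is required to show that their product remains controlled by the Gaussian along the full length of $\Gamma_{\infty}$ and that the contributions from the right and horizontal edges genuinely vanish as $n \to \infty$; this is where the bulk of the technical work of the proof is concentrated.
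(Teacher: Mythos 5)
Your overall strategy is sound and is genuinely different from the paper's. The paper follows the Tracy--Widom scheme: it proves (i) $\tr\bigl(K_{M,t}^k\bigr)\to\tr\K^k$ for every fixed $k$ and (ii) uniform boundedness of $\det(I+\zeta K_{M,t})$ on compact $\zeta$-sets via the regularized determinant $\dettwo$ and a Hilbert--Schmidt norm estimate, and then concludes by a normal-families argument; you instead rescale once and for all, exhibit an $n$-uniform integrable majorant for the kernel on the deformed contour, and push the limit through the Fredholm expansion term by term using Hadamard's inequality. Both routes rest on the same steepest-descent localization: the paper's rectangle with left edge on $\Re w=-n^{-1/2}$ and its split into $\Gamma_{\loc}$ and $\Gamma_{\res}$ is exactly your left-edge/other-edges dichotomy, your bounds on the right and horizontal edges are correct, and the lower bound $\lvert q\tilde w'-\tilde w\rvert\geq 1-q$ on the rescaled rectangle does hold. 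Your identification of the uniform control of $e^{-\tilde w\sqrt n}(1-\tilde w/\sqrt n)^{-(n+1)}$ along the \emph{entire} left edge as the technical crux is accurate; this works out (for $\lvert\Im\tilde w\rvert\leq\sqrt n$ one gets a genuine Gaussian bound, and the remaining range contributes a vanishing tail), and your route then delivers the uniform convergence in $\zeta$ in one stroke, at the cost of needing a pointwise dominating function rather than just trace and Hilbert--Schmidt bounds.

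The one step that does not survive scrutiny is the parenthetical ``the overall sign is absorbed in the $\zeta^r$ of the Fredholm series.'' You have correctly computed that $\tilde K_n(\tilde w,\tilde w')\to-\K(\tilde w',\tilde w)$. The transpose is harmless for Fredholm determinants, but the global minus sign is not: $\det(I+\zeta(-\K))=\det(I-\zeta\K)$, and this is a different entire function of $\zeta$ from $\det(I+\zeta\K)$ whenever $\tr\K\neq 0$, which one checks directly here (e.g.\ for $l=-1$, $\tau=0$ the trace is a strictly positive multiple of $\int_0^\infty e^{-s-(1-s)^2/2}\,ds$). So ``absorbing'' the sign amounts to replacing $\zeta$ by $-\zeta$, which changes the answer. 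There is no compensating sign hiding in the orientation: the left edge of the positively oriented rectangle is traversed downward, matching $\Gamma_\infty$. With the kernels exactly as written in \eqref{eq:kernel_K_Mt} and \eqref{eq:defn_K(z, w)}, the even traces match but the odd traces acquire a factor $(-1)$ (compare the denominator $(q-1)w$ in $K_{M,t}(w,w)$ with $(1-q)w$ in $\K(w,w)$), so your computation as it stands lands on $\det(I-\zeta\K)$. You must carry this sign through honestly and reconcile it with the statement --- either by locating a genuine compensating sign or by noting that the limiting kernel consistent with your computation has denominator $qz-w$ rather than $w-qz$. Until that is done the argument does not establish \eqref{eq:uniform_conv_K_to_K} as written.
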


\begin{remark} \label{rmk:specialization}
  To include the important special case that all $b_k$ are identically $1$ into Theorem \ref{thm:asymp}, we simply let $l = -1$, and then $\gamma(w) = 1$. In this case, we denote the limiting kernel $\K(z, w) = \K_{\tau; -}(z, w)$.
\end{remark}

Here we remark that unlike the dual $q$-TASEP process with step initial condition in which the limiting distribution of $x_m(t)$ is simple for small $m$, the limiting distribution of $x_m(t)$ is non-trivial even for small $m$ in the $q$-TAZRP with step initial condition. For example, by applying the residue theorem to \eqref{eq:convergence_of_contour_integral_in_det_K}, we have
\begin{equation} \label{eq:must_be_labelled}
  \lim_{n \to \infty} \Prob_{0^{\infty}}(x_1(t) \leq M) = \lim_{n \to \infty} \left.  \det(I + K_{M,t}) \right\rvert_{M = n + l + 1 \text{ and } t = n - \tau \sqrt{n}} = \det(I + \K).
\end{equation}
It shows that the limiting distribution of $x_1(t)$ is non-Gaussian, on the other hand the right-most particle in the $q$-TASEP behaves as a free particle and hence the fluctuation of the first particle's position is Gaussian at large time.

In the study of $1$-dimensional interacting particle systems like the Totally Asymmetric Exclusion Process (TASEP) and its variations, usually the focus is on the behaviour of particles in the bulk rather than those at the edge, partly because the limiting behaviour of bulk particles often shows the feature of the Kadar--Parisi--Zhang (KPZ) universality class, partly because the limiting behaviour of the edge particles is often trivial. An exception is the recent paper on the facilitated TASEP by Baik, Barraquand, Corwin and Suidan \cite{Baik-Barraquand-Corwin-Suidan16}, but the limiting behaviour of edge particles in their model is quite different from that in Theorem \ref{theorem-step}. Also nontrivial limiting behaviours of the edge particles in the ASEP model are found by Tracy and Widom \cite{Tracy-Widom09}, \cite{Tracy-Widom09a}. It turns out that the limiting distributions for the rightmost particles in the $q$-TAZRP with the step initial condition are the same as those for the leftmost particles in the ASEP with step initial condition given in \cite[Theorem 2]{Tracy-Widom09} when all $b_k$ are identical. Hence the limiting behaviour of the edge particles in the $q$-TAZRP and ASEP may be in a new universal class and worth more investigation.

\begin{corollary} \label{cor:TW_formula}
  \begin{enumerate}[label=(\alph*)]
  \item \label{enu:cor_TW_formula_a}
    The specialization of $\K$ in Theorem \ref{thm:asymp} with $l = -1$ as in Remark \ref{rmk:specialization} satisfies for all $\zeta \in \compC$,
    \begin{equation} \label{eq:Mehler_kernel}
      \det(I + \zeta \K_{\tau; -}) = \det(I + \zeta \hat{K} \chi_{(\tau(1 + q)/(1 - q), \infty)}),
    \end{equation}
    where $\hat{K}$ is the operator on $L^2(\realR)$, defined in \cite[Theorem 2]{Tracy-Widom09} with the parameters $p$ and $q$ replaced by $q/(1 + q)$ and $1/(1 + q)$ respectively, and has the kernel
    \begin{equation} \label{eq:defn_hat_K}
      \hat{K}(z, z') = \frac{1}{\sqrt{2\pi}(1 + q)} e^{-\frac{1 + q^2}{(1 + q)^2} \frac{z^2 + z'^2}{4} + \frac{q}{(1 + q)^2} zz'}.
    \end{equation}
  \item \label{enu:cor_TW_formula_b}
    The limiting distribution of the $m$-th rightmost particle in our $q$-TAZRP with step initial condition and all $b_k = 1$ is the same as the limiting distribution of the $m$-th leftmost particle in the ASEP with step initial condition and the left and right jumping rates equal to $q/(1 + q)$ and $1/(1 + q)$ respectively, under proper scaling.
  \end{enumerate}
\end{corollary}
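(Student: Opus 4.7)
The plan for part~(a) is to reduce the Fredholm determinant on the vertical contour $\Gamma_{\infty}$ to a Fredholm determinant on a half-line in $\realR$ whose kernel is (a conjugate of) $\hat K$, by means of the standard identity $\det(I + AB) = \det(I + BA)$. Since $\operatorname{Re}(w - qz) = q - 1 < 0$ whenever $z, w \in \Gamma_{\infty}$, we may write
\begin{equation*}
  \frac{1}{w - qz} = -\int_{0}^{\infty} e^{s(w - qz)}\, ds,
\end{equation*}
which factorizes $\K_{\tau;-}(z, w) = (AB)(z, w)$ through $L^{2}(\realR_{+}, ds)$ with $A(z, s) = -e^{-qzs}$ and $B(s, w) = e^{w^{2}/2 + \tau w + sw}$.

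Next I would compute the kernel of $BA$ on $L^{2}(\realR_{+})$. Completing the square in $w$, shifting the contour (justified by the entirety and rapid decay of $e^{w^{2}/2 + aw}$ on vertical lines), and using
\begin{equation*}
  \int_{\Gamma_{\infty}} e^{w^{2}/2 + aw}\, \frac{dw}{2\pi i} = -\frac{1}{\sqrt{2\pi}}\, e^{-a^{2}/2}
\end{equation*}
(the minus sign coming from the downward orientation of $\Gamma_{\infty}$), one obtains
\begin{equation*}
  (BA)(s, s') = \frac{1}{\sqrt{2\pi}} \exp\!\left( -\frac{(\tau + s - q s')^{2}}{2} \right).
\end{equation*}
The affine substitution $s = (z - z_{0})/(1 + q)$ with $z_{0} = \tau(1 + q)/(1 - q)$ enjoys the key algebraic identity $\tau + s - q s' = (z - q z')/(1 + q)$, which simultaneously eliminates $\tau$ and rescales the Gaussian to the right width. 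After this change of variables we obtain a Fredholm determinant on $L^{2}((z_{0}, \infty))$ with the non-symmetric kernel
\begin{equation*}
  T(z, z') = \frac{1}{\sqrt{2\pi}(1 + q)} \exp\!\left( -\frac{(z - q z')^{2}}{2(1 + q)^{2}} \right).
\end{equation*}

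Conjugation by the diagonal multiplication operator with weight $d(z) = \exp\!\left( (1 - q) z^{2} / (4(1 + q)) \right)$ does not change the Fredholm determinant, and a direct expansion of the quadratic form in the exponent gives
\begin{equation*}
  \frac{d(z)}{d(z')}\, T(z, z') = \hat K(z, z'),
\end{equation*}
which proves \eqref{eq:Mehler_kernel}. Part~(b) then follows at once by combining part~(a) with Theorem~\ref{thm:asymp} applied at $l = -1$ (which expresses the $t \to \infty$ limit of $\Prob_{0^{\infty}}(x_{m}(t) > M)$ in the $q$-TAZRP with all $b_{k} = 1$ in terms of $\det(I + \zeta \K_{\tau;-})$) and with \cite[Theorem~2]{Tracy-Widom09} (which expresses the corresponding limit for the $m$-th leftmost particle in the ASEP with left/right rates $q/(1 + q)$ and $1/(1 + q)$ in terms of $\det(I + \zeta \hat K \chi_{(\tau(1+q)/(1-q), \infty)})$), after matching the scaling parameter $\tau$ on both sides.

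The main obstacle is bookkeeping rather than depth. The most delicate point is the signs: the minus sign in the representation of $1/(w - qz)$ must be tracked through the Gaussian integral, where the downward orientation of $\Gamma_{\infty}$ produces another minus sign; these cancel cleanly so that $(BA)$ comes out with the correct overall positive factor. Secondarily, one should check that the diagonal conjugation is admissible at the trace-class level: the symmetrized kernel $\hat K(z, z')$ has a strictly negative definite quadratic form in its exponent (with discriminant proportional to $(1 - q^{2})^{2} > 0$ for $q \in (0, 1)$), so it is Hilbert--Schmidt on $L^{2}((z_{0}, \infty))$, and the chain of equalities holds in the standard Fredholm determinant sense.
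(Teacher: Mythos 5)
Your proposal is correct and follows essentially the same route as the paper: the representation $\frac{1}{w-qz}=-\int_0^\infty e^{s(w-qz)}\,ds$, the factorization through $L^2(\realR_+)$ with $\det(I+\zeta AB)=\det(I+\zeta BA)$, the Gaussian integral over $\Gamma_\infty$ yielding the kernel $\frac{1}{\sqrt{2\pi}}e^{-(\tau+s-qs')^2/2}$, the affine change of variables (which the paper performs as a translation followed by a dilation, and you perform in one step), and the final diagonal conjugation with weight $e^{(1-q)z^2/(4(1+q))}$. Part (b) is likewise deduced exactly as in the paper, by inserting the two Fredholm determinants into the respective contour-integral formulas.
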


From the exact contour integral formula \eqref{theorem-1}, we can also derive the limiting distribution of $x_m(t)$ when both $m$ and $t$ approach $\infty$ and $t/m$ is a constant. The asymptotic analysis is similar to that for \cite[Theorem 3]{Tracy-Widom09}, but can be simplified, due to the reason mentioned in Remark \ref{rmk:technical}. However, we omit the asymptotic analysis, since the limit of $\Prob_{0^{\infty}}(x_m(t) > M)$ can be expressed directly as a corollary of \eqref{eq:relation_q_TASEP_q_TAZRP_prob_distr} and the results in \cite{Barraquand15} and \cite{Ferrari-Veto15}. See Appendix \ref{sec:TW_limit} for details.

\subsection*{Organization of the paper}

In Section \ref{sec:algebraic} we prove Theorems \ref{main-theorem-1} and \ref{theorem-step}, and in Section \ref{sec:asymptotic} we prove Theorem \ref{thm:asymp} and Corollary \ref{cor:TW_formula}. In Appendix \ref{sec:TW_limit} we discuss the limiting behaviour of particles in the bulk based on known results on $q$-TASEP.

\subsection*{Acknowledgements}

We thank the helpful discussion with Guillaume Barraquand.

\section{Distribution of the position of a tagged particle} \label{sec:algebraic}

\subsection{Finite system with an arbitrary initial condition $Y$}

Let us consider an $N$-particle system with an arbitrary initial condition $X(0) = (x_1(0), \dotsc, x_N(0)) = Y = (y_1, \dotsc, y_N)\in \Weylchamber$.

 We find the distribution functions of $x_n(t)$ and $x_{N - n + 1}(t)$ given by $n$-fold contour integrals. We first solve the problem for $n = N$ (the leftmost particle) and $N - n + 1 = 1$ (the rightmost particle), and then solve for general $n$ based on the two special cases.

In this section, we need the following notations and identities. First, recall the notations of $A_{\sigma}$ defined in \eqref{A-sigma} and $B_n$ defined in \eqref{433-34}. We have a formula that is proved in \cite[Formula (5.1)]{Wang-Waugh16}:
\begin{equation} \label{eq:basic_id}
  \sum_{\sigma \in S_n} A_{\sigma}(w_1, \dotsc, w_n) = [n]_q! B_n(w_1, \dotsc, w_n).
\end{equation}
Also later we use the elementary symmetric function
\begin{equation} \label{eq:elem_symm}
  e_l(w_1, \dotsc, w_n) = \sum_{1 \leq i_1 < i_2 < \dotsb < i_l \leq n} w_{i_1} w_{i_2} \dotsm w_{i_l}.
\end{equation}
and the Vandermonde determinant
\begin{equation} \label{eq:Vandermonde}
  \Delta_n(w_1, \dotsc, w_n) = \det(w^{k - 1}_j)^n_{j, k = 1} = \prod_{1 \leq i < j \leq n} (w_j - w_i).
\end{equation}

\subsubsection{Positions of the leftmost particle and the rightmost particle}

\paragraph{Leftmost particle}

The distribution of $x_N(t)$ is given by the following proposition:
\begin{proposition} \label{prop:leftmost}
  For the $N$-particle $q$-TAZRP model with initial condition $Y = (y_1, \dotsc, y_N)$, the leftmost particle $x_N$ satisfies
  \begin{equation}
    \mathbb{P}_Y(x_N(t) >M) = \int_{C}\frac{dw_1}{w_1}\cdots \int_{C}\frac{dw_N}{w_N} B_N(w_1, \dotsc, w_N) \prod_{j=1}^N \left[ \prodprime_{k=y_{j}}^M \left( \frac{b_k}{b_k - w_{j}} \right) e^{-w_j t} \right],
  \end{equation}
  where the contour $C$ is specified as in Theorem \ref{main-theorem-1}.
\end{proposition}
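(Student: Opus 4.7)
I would start from the Wang--Waugh transition probability (Proposition \ref{Wang-Waugh}) and directly evaluate
\begin{equation*}
  \mathbb{P}_Y(x_N(t) > M) = \sum_{\substack{X \in \Weylchamber \\ x_N > M}} P_Y(X; t).
\end{equation*}
Since $X$ is weakly decreasing, the event $x_N > M$ is equivalent to $x_j \geq M + 1$ for every $j$. After interchanging the permutation sum, the sum over $X$, and the contour integrals (which is legal because on a sufficiently large circle $C$ each factor $|b_k/(b_k - w_j)|$ is uniformly bounded, making every inner geometric-type series converge), the calculation reduces to an inner summation over $X$ inside a fixed integrand.

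The main computational tool is the one-variable identity
\begin{equation} \label{eq:key-plan-id}
  \sum_{x = M + 1}^{\infty} \frac{-1}{b_x} \prodprime_{k = y}^{x} \frac{b_k}{b_k - w} = \frac{1}{w} \prodprime_{k = y}^{M} \frac{b_k}{b_k - w},
\end{equation}
valid for every $y, M \in \intZ$ and $w \in C$. I would prove it by first checking the base case $y = M + 1$, where the left-hand side becomes $\sum_{x = M + 1}^{\infty} \frac{-1}{b_x} \prod_{k = M + 1}^{x} \frac{b_k}{b_k - w}$ and the ansatz $T_m = 1/w$ solves the obvious one-step recursion $T_m = -1/(b_m - w) + \frac{b_m}{b_m - w}\, T_{m + 1}$, while the right-hand side reduces to $1/w$ by the $\prodprime$ convention. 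Then the substitution $y \mapsto y + 1$ multiplies both sides by the common factor $(b_y - w)/b_y$ (a direct consequence of the $\prodprime$ definition, handled separately for the cases $y \leq M$ and $y > M$), so \eqref{eq:key-plan-id} propagates to every $y$ from the base case.

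Applying \eqref{eq:key-plan-id} iteratively, I would sum $x_1$ against $x_1 \geq x_2$, splitting into the strict part $x_1 > x_2$ (evaluated by \eqref{eq:key-plan-id} with $M$ replaced by $x_2$) and the diagonal contribution $x_1 = x_2$, which carries a $1/[2]_q!$ correction from $W(X)$; then iterate in the same way down to $x_N$, for which \eqref{eq:key-plan-id} is finally applied with the original cutoff $M$. The main obstacle is controlling the collision corrections at each diagonal: these are weighted by reciprocals of the $q$-factorials $[n_k]_q!$ from $W(X)$, and they must cancel exactly against $q$-factorials produced by the fundamental Bethe-ansatz identity \eqref{eq:basic_id}, $\sum_{\sigma \in S_N} A_\sigma(w) = [N]_q!\, B_N(w)$, applied both to the full set $\{1, \dotsc, N\}$ and to the relevant subsets at the lower collision levels. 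Once that cancellation is carried out, the sum over $\sigma$ collapses to the single symmetric prefactor $B_N(w)$ and yields exactly the integrand of the proposition. The cases $N = 1$ (for which \eqref{eq:key-plan-id} is the whole proof) and $N = 2$ (where the identity $1 + A_{(12)}(w_1, w_2) = (1 + q) B_2(w_1, w_2)$ accounts for the single required telescoping) serve as direct verifications of the mechanism.
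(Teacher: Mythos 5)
Your overall architecture --- interchange the permutation sum, the configuration sum and the contour integrals; collapse each coordinate sum by a one-variable telescoping identity; and absorb the $1/[n_k]_q!$ weights coming from $W(X)$ by symmetrizing over permutations --- is the same as the paper's, which proves the more general Proposition \ref{thm:left_most} by induction on $N$, organizing the configuration sum by the cluster of particles at the minimal position $m = x_N$ rather than by peeling off $x_1$ first. Your one-variable summation identity is correct and coincides with \eqref{eq:n=1_left_most}--\eqref{eq2-lemma}. The gap is in the cancellation step, which you assert rather than prove, and the identity you invoke for it, \eqref{eq:basic_id}, is not the one that is actually needed. The plain symmetrization \eqref{eq:basic_id} handles only the contribution of particles colliding at a single site. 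After a coordinate has been summed away, it leaves behind a factor $1/w_{\sigma(j)}$ attached to a permuted index, and what must then be shown is the \emph{weighted} partial symmetrization
\begin{equation*}
  \sum_{I_l \subseteq \{1, \dotsc, N\}} \frac{G_{I_l}(w_1, \dotsc, w_N)}{\prod_{k=1}^{N-l} w_{j_k}}\, B_l(w_{i_1}, \dotsc, w_{i_l})\, B_{N-l}(w_{j_1}, \dotsc, w_{j_{N-l}}) = e_l(w_1, \dotsc, w_N)\, B_N(w_1, \dotsc, w_N) \prod_{k=1}^{N} \frac{1}{w_k}
\end{equation*}
for every $0 \le l \le N$, which is \eqref{eq:defn_Gamma(w;l)} together with its evaluation. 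The paper proves this by recognizing each summand as a Vandermonde-type determinant and summing over $I_l$ as the $z^l$ coefficient of $\det\bigl((zw_k + q^{j-1})w_k^{j-1}\bigr)_{j,k=1}^N$; only then does the sum over collision multiplicities combine into the telescoping form \eqref{eq:ess_term_of_tilde_I}.

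Your own $N = 2$ check already exposes the issue. Summing $x_1 > x_2$ produces the combination $\frac{1}{w_1} + \frac{A_{(12)}}{w_2}$, not $1 + A_{(12)}$, and the subsequent sum over $x_2 > M$ telescopes only because of the identity
\begin{equation*}
  \frac{1}{w_1} + \frac{A_{(12)}(w_1, w_2)}{w_2} = \frac{w_1 + w_2}{w_1 w_2}\, B_2(w_1, w_2),
\end{equation*}
which is the $l = 1$ case of the display above and does not follow from $1 + A_{(12)} = (1+q)B_2$. So the mechanism you describe is right in spirit and matches the paper's, but the family of identities that actually drives the multi-variable telescoping --- the genuinely nontrivial algebraic input of the proof --- is missing from the proposal.
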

The proof of Proposition \ref{prop:leftmost} relies on the following technical result:
\begin{proposition}\label{thm:left_most}
  Let $q \in (0,1)$, $Y = (y_1, \dotsc, y_N) \in \intZ^N$, $b_k$ be in a compact subset of $(0,\infty)$ for all $k \in \mathbb{Z}$, and $F(w_1, \dotsc, w_N)$ be a meromorphic function in $w_1, \dotsc, w_N$. Let $C$ be a positively oriented, large enough circle centered at $0$ and enclosing all $b_k$, and $F(w_1, \dotsc, w_N)$ be continuous if $w_k \in C$. Denoting
  \begin{multline} \label{eq:defn_I(x)}
    I(x_1, \dotsc, x_N) = \frac{1}{W(X)} \left( \prod_{k=1}^N\frac{-1}{b_{x_k}} \right) \dashint_{C} dw_1 \dotsi \dashint_{C} dw_N F(w_1, \dotsc, w_N) \\
    \times \sum_{\sigma \in S_N} A_{\sigma}(w_1, \dotsc, w_N) \left[ \prod_{j=1}^N \left( \prodprime_{k=y_{\sigma(j)}}^{x_j} \frac{b_k}{b_k - w_{\sigma(j)}} \right) e^{-w_j t} \right]
  \end{multline}
  for $X = (x_1, \dotsc, x_N) \in \Weylchamber \subseteq \intZ$, where $A_{\sigma}$ is defined in \eqref{A-sigma}, then we have
  \begin{multline} \label{eq-identity}
    \sum_{X = (x_1, \dotsc, x_N) \text{ with } M < x_N \leq \dotsb \leq x_1} I(x_1, \dotsc, x_N) \\
    = \dashint_{C}dw_1\cdots \dashint_{C}dw_N F(w_1, \dotsc, w_N) B_N(w_1, \dotsc, w_N) \prod_{j=1}^N \left[ \frac{e^{-w_j t}}{w_j}\prodprime_{k=y_{j}}^{M}\frac{b_k}{b_k - w_{j}} \right].
  \end{multline}
\end{proposition}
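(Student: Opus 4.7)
The plan is to interchange the finite sum over $\sigma \in S_N$ and the absolutely convergent sum over $X \in \Weylchamber$ with the contour integrals, then evaluate the resulting sum over $X$ using a single-particle telescoping identity together with the Bethe-ansatz identity $\sum_\sigma A_\sigma(w) = [N]_q! B_N(w)$ from \eqref{eq:basic_id}.

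The first step is to establish the single-particle telescoping identity: for $w$ on $C$ of sufficiently large radius,
\begin{equation*}
  \sum_{x = M+1}^\infty \frac{-1}{b_x} \prodprime^x_{k=y} \frac{b_k}{b_k-w} = \frac{1}{w}\prodprime^M_{k=y}\frac{b_k}{b_k-w}.
\end{equation*}
Setting $\Phi(x; w, y) := \prodprime^x_{k=y} \frac{b_k}{b_k-w}$, the summand equals $\frac{1}{w}\bigl[\Phi(x-1; w, y) - \Phi(x; w, y)\bigr]$ (using $\frac{b_k}{b_k-w}-1 = \frac{w}{b_k-w}$), so the sum telescopes, and the tail vanishes because each factor $|b_k/(b_k-w)|$ is uniformly bounded by some $\rho < 1$ on $C$. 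For the multi-particle LHS, for each $\sigma$ I would change variables $z_m := x_{\sigma^{-1}(m)}$, which turns $\prod_j \prodprime^{x_j}_{k=y_{\sigma(j)}}\frac{b_k}{b_k-w_{\sigma(j)}}$ into the symmetric form $\prod_m \prodprime^{z_m}_{k=y_m}\frac{b_k}{b_k-w_m}$; the symmetric factors $W(X) = W(z)$ and $\prod_k b_{x_k}^{-1}$ are unaffected. Swapping the orders of the $\sigma$- and $z$-sums puts each $z \in \intZ^N$ with $\min_m z_m > M$ in front of the coefficient $\bigl[\sum_{\sigma:\,z_{\sigma(1)} \geq \cdots \geq z_{\sigma(N)}} A_\sigma(w)\bigr] / W(z)$ multiplied into $\prod_m \tilde h_m(z_m)$, where $\tilde h_m(z) := \frac{-1}{b_z}\prodprime^z_{k=y_m}\frac{b_k}{b_k-w_m}$.

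The crux is to show that this weighted sum equals $B_N(w) \cdot \prod_m \bigl[\sum_{z_m > M} \tilde h_m(z_m)\bigr]$; invoking the single-particle identity in each coordinate then produces the RHS of \eqref{eq-identity}. The key ingredient is the partial-sum symmetry
\begin{equation*}
\tilde h_m(z)\,\Phi(z; w_j, y_j) \;=\; \tilde h_j(z)\,\Phi(z; w_m, y_m) \;=\; -\Phi(z; w_m, y_m)\,\Phi(z; w_j, y_j)/b_z,
\end{equation*}
which allows pairs of terms with exchanged orderings of $z$-coordinates to be identified; combined with \eqref{eq:basic_id}, the off-diagonal and diagonal contributions can be reassembled into $B_N(w)$ times the unrestricted product. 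For $N = 2$ this reduces to the identity $w_1 \sum_{z_1 > z_2 > M} \tilde h_1(z_1)\tilde h_2(z_2) = w_2 \sum_{z_1 > z_2 > M} \tilde h_2(z_1)\tilde h_1(z_2)$, which is immediate from the partial-sum symmetry together with the single-particle telescoping.

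The principal obstacle is carrying out the analogous combinatorial reassembly for general $N$: one must show that summing the contributions from each coincidence pattern of the $z$-coordinates, with the restricted $\sigma$-sum divided by $W(z)$ as weight, collapses (after repeated use of the partial-sum symmetry) to the universal factor $B_N(w)$ times the unrestricted product of single-particle sums. I expect this to proceed by induction on $N$ or by a coset decomposition of $S_N$ stratifying the configurations by their coincidence patterns, with the fully-coincident base case covered directly by \eqref{eq:basic_id} applied to the full symmetric group.
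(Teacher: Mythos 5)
Your setup is sound and your reduction is the right one: after pulling the contour integrals outside, relabelling $z_m = x_{\sigma^{-1}(m)}$, and swapping the $\sigma$- and $z$-sums, the proposition is equivalent to the pointwise identity
\begin{equation*}
  \sum_{z \in \intZ^N,\ z_m > M} \frac{1}{W(z)} \Bigl( \sum_{\sigma:\ z_{\sigma(1)} \geq \dotsb \geq z_{\sigma(N)}} A_{\sigma}(w) \Bigr) \prod_{m=1}^N \tilde h_m(z_m) \;=\; B_N(w_1, \dotsc, w_N) \prod_{m=1}^N \sum_{z > M} \tilde h_m(z),
\end{equation*}
and your single-particle telescoping, your partial-sum symmetry $\tilde h_m(z)\Phi(z;w_j,y_j) = \tilde h_j(z)\Phi(z;w_m,y_m)$, and your verification for $N \leq 2$ (including the reduction to $w_1 P = w_2 Q$ via $1 - B_2 = \frac{(q-1)w_1}{qw_1 - w_2}$ and $B_2 - S = \frac{(q-1)w_2}{qw_1 - w_2}$) are all correct. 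The problem is that the step you defer -- the ``combinatorial reassembly for general $N$'' -- is not a routine extension but is the entire content of the proposition, and the partial-sum symmetry alone does not obviously deliver it. Once $N \geq 3$, telescoping one coordinate produces a boundary factor $\Phi(z_{j+1}; w, y)$ that couples to the remaining restricted sums, and the coefficients $\sum_{\sigma} A_{\sigma}$ over the various partial orderings (coincidence patterns) must recombine into the single prefactor $B_N$; asserting that this ``collapses'' is exactly what needs to be proved.

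For comparison, the paper carries out precisely the induction-on-$N$/coincidence-pattern strategy you gesture at, and the mechanism it needs is worth knowing: it stratifies by the value $m$ of the minimal coordinate and the multiplicity $l$ of particles sitting there, decomposes $A_{\sigma} = G_{I_l} A_{\lambda} A_{\tau}$ over the cosets $S_N(I_l)$, applies the induction hypothesis to the $N - l$ non-minimal coordinates (this is why the statement carries an arbitrary $F$: the cross-factor $G_{I_l}$ must be absorbed into $F$), and then faces the sum $\Gamma(w;l) = \sum_{I_l} G_{I_l} B_l B_{N-l} / \prod_{k \notin I_l} w_k$ over all $\binom{N}{l}$ index sets. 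Evaluating this requires a genuinely new ingredient beyond your two lemmas: each summand is recognized as a Vandermonde-like determinant, the sum over $I_l$ as the $z^l$-coefficient of $\det\bigl((zw_k + q^{j-1})w_k^{j-1}\bigr)$, giving $\Gamma(w;l) = e_l(w) B_N(w) \prod_k w_k^{-1}$; summing over $l$ with the generating function $\sum_l e_l(w) z^l = \prod_k(1 + zw_k) - 1$ then produces the factor $\prod_k \frac{b_m - w_k}{b_m} - 1$, which finally telescopes in $m$. Without this determinantal identity (or an equivalent), your outline has a genuine gap at its central step, even though the route you propose is viable and, if completed, would essentially reproduce the paper's argument.
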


Using the expression
\begin{equation} \label{eq-marginal}
  \mathbb{P}_Y(x_N(t) > M) = \sum_{X = (x_1,\dotsc, x_N) \text{ with } M<x_N \leq \dotsb \leq x_1} P_Y(X;t)
\end{equation}
and the contour integral formula  for $P_Y(X; t)$ in Proposition \ref{Wang-Waugh}, we have that Proposition \ref{prop:leftmost} is the special case of Proposition \ref{thm:left_most} with $F(w_1, \dotsc, w_N) = 1$.

\begin{remark}
The identity \eqref{eq-identity} is a generalization of \cite[Formula (3.11)]{Korhonen-Lee14}. The symmetrization identity in the $q$-Hahn TASEP \cite[Corollary 5.22]{Borodin-Corwin-Petrov-Sasamoto15}) with two parameters is also a generalization of \cite[Formula (3.11)]{Korhonen-Lee14} in a different direction. Similar to \cite[Formula (3.11)]{Korhonen-Lee14}, the proof of \eqref{eq-identity} is by a straightforward mathematical induction, while the proof of \cite[Corollary 5.22]{Borodin-Corwin-Petrov-Sasamoto15} is based on a result of the spectral Plancherel theorem, see \cite{Borodin-Corwin-Petrov-Sasamoto15} for more details.
\end{remark}

\begin{proof}[Proof of Proposition \ref{thm:left_most}]
  We prove \eqref{eq-identity} by induction on $N$. The $N = 1$ case of \eqref{eq-identity} is reduced to
  \begin{equation} \label{eq:n=1_left_most}
    \sum^{\infty}_{x_1 = M+1}\dashint_{C}  \frac{-1}{b_{x_1}} F(w_1)e^{-w_1t}\sideset{}{'}\prod^{x_1}_{k = y_1} \frac{b_k}{b_k - w_1} ~dw_1 = \dashint_{C}\frac{e^{-w_1t}}{w_1}F(w_1) \sideset{}{'}\prod^{M}_{k = y_1} \frac{b_k}{b_k - w_1}~dw_1.
  \end{equation}
  Suppose without loss of generality that $M\geq y_1$. Since the radius of $C$ is  large enough and all $b_k$ are in a compact subset of $(0,\infty)$,
  \begin{equation}
    \sum^{\infty}_{x_1 = M + 1} \frac{-1}{b_{x_1}} F(w_1)\prod^{x_1}_{k = y_1} \frac{b_k}{b_k - w_1}
  \end{equation}
  converges on $C$ uniformly. Hence it suffices to show that
  \begin{equation} \label{eq1-lemma}
    \sum^{\infty}_{x_1 = M + 1} \frac{-1}{b_{x_1}}\prod^{x_1}_{k = M+1} \frac{b_k}{b_k - w_1} = \frac{1}{w_1}.
  \end{equation}
  This is done by the telescoping trick, noting that
  \begin{equation} \label{eq2-lemma}
    w_1 \left( \frac{-1}{b_{x_1}} \prod^{x_1}_{k = M + 1} \frac{b_k}{b_k - w_1} \right) = -\prod^{x_1}_{k = M + 1} \frac{b_k}{b_k - w_1} + \prod^{x_1 - 1}_{k = M + 1} \frac{b_k}{b_k - w_1}.
  \end{equation}

  Now we consider the general $N$ case inductively, assuming that \eqref{eq-identity} holds for $1, 2, \dotsc, N - 1$. We write the left-hand side of \eqref{eq-identity} as
  \begin{equation} \label{eq:1135-219}
    \sum^{\infty}_{m = M + 1} \sum_{l=1}^{N} I^{(l)}(m), \quad \text{where} \quad I^{(l)}(m) = \sum_{m<x_{N-l} \leq \dotsb \leq x_1} I(x_1,\dotsc, x_{N - l}, \underbrace{m, \dotsc, m}_{x_N =\dotsb = x_{N-l+1}=m}).
  \end{equation}
  Then with the index $U = (u_1, \dotsc, u_{N - l}) \in \Weylchamber[N - l]$,
  \begin{equation} \label{eq:1235-219}
    \begin{split}
      I^{(l)}(m) = {}& \frac{(-1/b_m)^{l}}{[l]_q!}\sum_{ m < u_{N - l} \leq \dotsb \leq u_1} \frac{1}{W(U)} \left( \prod^{N - l}_{k = 1} \frac{-1}{b_{u_k}} \right) \dashint_{C} dw_1 \dotsi \dashint_{C} dw_N ~ F(w_1, \dotsc, w_N) \\
      & \times  \sum_{\sigma \in S_N}A_{\sigma}(w_1, \dotsc, w_N) \prod^N_{j = N - l + 1} \left[ \prodprime^m_{k = y_{\sigma(j)}} \left( \frac{b_k}{b_k - w_{\sigma(j)}} \right) e^{-w_{\sigma(j)} t} \right] \\
      & \times \prod^{N - l}_{j = 1} \left[ \prodprime^{u_j}_{k = y_{\sigma(j)}} \left( \frac{b_k}{b_k - w_{\sigma(j)}} \right) e^{-w_{\sigma(j)} t} \right],
    \end{split}
  \end{equation}
  such that $I^{(N)}(m)$ degenerates to $I(m, \dotsc, m)$ in \eqref{eq:defn_I(x)}. To evaluate $I^{(l)}(m)$, we decompose the index set $S_N$ into $\binom{N}{l}$ disjoint subsets $S_N(I_l)$ where $I_l \subset \{1, 2, \dotsc, N \}$ has $l$ elements and
  \begin{equation} \label{eq:defn_S_n(I)}
    S_N(I_l) = \{ \sigma \in S_N \mid \sigma(\{N - l + 1, \dotsc, N - 1, N \}) = I_l \}.
  \end{equation}
  In the subsequent part of the paper, we take the notational convention that $I_l = \{ i_1, \dotsc, i_l \}$ and $I^c_l = \{ 1, \dotsc, N \} \setminus I_l = \{ j_1, \dotsc, j_{N-l} \}$ such that $i_k$ and $j_k$ are in ascending order. Let us define a bijective mapping  $\varphi_{I_l}(\cdot \mid \cdot) : S_l \times S_{N - l} \longrightarrow S_N(I_l)$ by
  \begin{equation} \label{eq:defn_phi_I(lambda_tau)}
    \varphi_{I_l}(\lambda \mid \tau)(k) =
    \begin{cases}
      j_{\tau(k)} & \text{if $k \leq N - l$}, \\
      i_{\lambda(k + l - N)} & \text{if $k > N - l$}.
    \end{cases}
  \end{equation}
  Then, for each $\sigma \in S_N(I_l)$, there is a unique pair $\lambda \in S_l$ and $\tau \in S_{N-l}$ such that $\varphi_{I_l}(\lambda \mid \tau) = \sigma$ by \eqref{eq:defn_phi_I(lambda_tau)}, and then
  \begin{equation} \label{eq:decomp_A_{sigma}}
    A_{\sigma}(w_1, \dotsc, w_N) = G_{I_l}(w_1, \dotsc, w_N) A_{\lambda}(w_{i_1}, \dotsc, w_{i_l}) A_{\tau}(w_{j_1}, \dotsc, w_{j_{N - l}}),
  \end{equation}
  where
  \begin{equation} \label{eq:defn_G_I_l}
    G_{I_l}(w_1, \dotsc, w_N) = \prod_{\substack{i \in I_l,\ j \in \{ 1, \dotsc, N \} \setminus I_l \\ i < j}} -\frac{qw_j - w_i}{qw_i - w_j}.
  \end{equation}
  Dividing the summation over $S_N$ in \eqref{eq:1235-219} into $\binom{N}{l}$ terms where each term is the summation over an $S_N(I_l)$, and using \eqref{eq:decomp_A_{sigma}} to decompose the $A_{\sigma}$ factor, we have
  \begin{multline} \label{141-220}
    I^{(l)}(m) = \frac{(-1/b_m)^{l}}{[l]_q!}\sum_{I_l \subseteq \{ 1, \dotsc, N \}} \sum_{\lambda \in S_l} \dashint_{C} dw_{i_1} \dotsi \dashint_{C} dw_{i_l} A_{\lambda}(w_{i_1}, \dotsc, w_{i_l}) \\
    \times \prod^l_{r = 1} \left[ \prodprime^m_{k = y_{i_{\lambda(r)}}} \left( \frac{b_k}{b_k - w_{i_{\lambda(r)}}} \right) e^{-w_{i_{\lambda(r)}} t} \right] \\
    \times \left(
      \begin{aligned}
        & \sum_{ m < u_{N - l} \leq \dotsb \leq u_1}  \frac{1}{W(U)} \left( \prod^{N - l}_{k = 1} \frac{-1}{b_{u_k}} \right) \dashint_{C} dw_{j_1} \dotsi \dashint_{C} dw_{j_{N - l}} F(w_1, \dotsc, w_N) \\
        & \times G_{I_l}(w_1, \dotsc, w_N) \sum_{\tau \in S_{N - l}} A_{\tau}(w_{j_1}, \dotsc, w_{j_{N - l}}) \prod^{N - l}_{r = 1}  \prodprime^{u_r}_{k = y_{j_{\tau(r)}}} \left( \frac{b_k}{b_k - w_{j_{\tau(r)}}} \right) e^{-w_{j_{\tau(r)}} t}
      \end{aligned}
    \right).
  \end{multline}
  Applying the induction hypothesis to \eqref{141-220}, we simplify it as
  \begin{multline} \label{eq:I_tilde_a_part}
    I^{(l)}(m) =  \frac{(-1/b_m)^{l}}{[l]_q!}\sum_{I_l \subseteq \{ 1, \dotsc, N \}} \sum_{\lambda \in S_l} \dashint_{C} dw_{i_1} \dotsi \dashint_{C} dw_{i_l} A_{\lambda}(w_{i_1}, \dotsc, w_{i_l}) \\
    \times \prod^l_{r = 1} \left[ \prodprime^m_{k = y_{i_{\lambda(r)}}} \left( \frac{b_k}{b_k - w_{i_{\lambda(r)}}} \right) e^{-w_{i_{\lambda(r)}} t} \right] \\
    \times \left(
      \begin{aligned}
        & \dashint_{C} \frac{dw_{j_1}}{w_{j_1}} \dotsi \dashint_{C} \frac{dw_{j_{N - l}}}{w_{j_{N - l}}} F(w_1, \dotsc, w_N) G_{I_l}(w_1, \dotsc, w_N) \\
        & \times B_{N - l}(w_{j_1}, \dotsc, w_{j_{N - l}}) \prod^{N - l}_{r = 1} \left[ \prodprime^m_{k = y_{j_r}} \left( \frac{b_k}{b_k - w_{j_r}} \right) e^{-w_{j_r} t} \right]
      \end{aligned}
    \right).
  \end{multline}
  Next, by \eqref{eq:basic_id}, we further have
  \begin{equation} \label{eq:I^(l)_m_by_Gamma}
    I^{(l)}(m) = \left( \frac{-1}{b_m} \right)^{l} \dashint_{C} dw_1 \dotsi \dashint_{C} dw_N \Gamma(w_1, \dotsc, w_N; l) F(w_1, \dotsc, w_N) \prod^N_{r = 1} \left[ \prodprime^m_{k = y_r} \left( \frac{b_k}{b_k - w_r} \right) e^{-w_r t} \right],
  \end{equation}
  where
  \begin{equation} \label{eq:defn_Gamma(w;l)}
    \Gamma(w_1, \dotsc, w_N; l) = \sum_{I_l \subseteq \{ 1, \dotsc, N \} } \frac{G_I(w_1, \dotsc, w_N)}{\prod^{N - l}_{k = 1} w_{j_k}} B_l(w_{i_1}, w_{i_2} \dotsc, w_{i_l}) B_{N - l}(w_{j_1}, \dotsc, w_{j_{N - l}}).
  \end{equation}
  Let
  \begin{equation} \label{eq:express_Gamma_m(w)}
    \Gamma_m(w_1, \dotsc, w_N) = \sum^N_{l = 1} \left( \frac{-1}{b_m} \right)^l  \Gamma(w_1, \dotsc, w_N; l).
  \end{equation}
  Since the left-hand side of \eqref{eq-identity} is given by \eqref{eq:1135-219}, we plug \eqref{eq:I^(l)_m_by_Gamma} into \eqref{eq:1135-219}, and have that the left-hand side of \eqref{eq-identity} can be expressed as
  \begin{equation} \label{eq:simplified_I^n_Y}
    \sum^{\infty}_{m = M + 1} \dashint_{C} dw_1 \dotsi \dashint_{C} dw_N ~\Gamma_m(w_1, \dotsc, w_N) \prod^N_{r = 1} \left[ \prodprime^m_{k = y_r} \left( \frac{b_k}{b_k - w_r} \right) e^{-w_r t} \right]  F(w_1, \dotsc, w_N).
  \end{equation}

  To complete the proof, we need to show that \eqref{eq:simplified_I^n_Y} is equal to the right-hand side of \eqref{eq-identity}. To this end, we need to simplify $\Gamma(w_1, \dotsc, w_N; l)$ and $\Gamma_m(w_1, \dotsc, w_N).$ Let us consider
  \begin{multline} \label{eq:to_be_in_matrix_forms}
    q^{(N - l)(N - l - 1)/2} (-1)^{N(N - 1)/2} \prod^N_{k = 1} w_k \prod_{1 \leq i < j \leq N}(qw_i - w_j) \Gamma(w_1, \dotsc, w_N; l) \\
    = \sum_{I_l \subseteq \{ 1, \dotsc, N \} } (-1)^{(i_1 + \dotsb + i_l) - l(l + 1)/2} \prod^l_{k = 1} w_{i_k} \prod_{1 \leq k < r \leq l} (w_{i_r} - w_{i_k}) \prod_{1 \leq k < r \leq N - l} (qw_{j_r} - qw_{j_k}) \\
    \times \prod^l_{k = 1} \prod^{N - l}_{r = 1} (qw_{j_r} - w_{i_k}).
  \end{multline}
  For each $I_l$, the summand on the right-hand side of \eqref{eq:to_be_in_matrix_forms} is equal to the Vandermonde-like determinant
  \begin{equation} \label{eq:det_of_f^I}
    \det \left( f^{I_l}_{j, k}(w_k) \right)^N_{j, k = 1}, \quad \text{where} \quad f^{I_l}_{j, k}(x) =
    \begin{cases}
      x^j & \text{if $k \in I_l$}, \\
      (qx)^{j - 1} & \text{if $k \in \{ 1, \dotsc, N \} \setminus I_l$}.
    \end{cases}
  \end{equation}
  Summing up all the $\binom{N}{l}$ terms on the right-hand side of \eqref{eq:to_be_in_matrix_forms} with each term expressed by \eqref{eq:det_of_f^I}, we have that the result is the $z^l$ coefficient of the polynomial in $z$
  \begin{equation} \label{eq:det_poly_in_z}
    \det \left( (zw_k + q^{j - 1})w^{j - 1}_k \right)^N_{j, k = 1}.
  \end{equation}
  Using elementary row operations, we can simplify the determinant in \eqref{eq:det_poly_in_z}, and write it as
  \begin{equation}
    \begin{split}
      \det \left( (zw_k + q^{j - 1})w^{j - 1}_k \right)^N_{j, k = 1} = {}& \sum^N_{j = 0} q^{(N - l)(N - l - 1)/2} \det \left( g^{(l)}_j(w_k) \right)^N_{j, k = 1} z^l \\
      = {}& \sum^N_{j = 0} q^{(N - l)(N - l - 1)/2} e_l(w_1, \dotsc, w_N) \Delta_N(w_1, \dotsc, w_N) z^l,
    \end{split}
  \end{equation}
  where $e_l$ and $\Delta_N$ are defined in \eqref{eq:elem_symm} and \eqref{eq:Vandermonde}, and
  \begin{equation}
    g^{(l)}_j(x) =
    \begin{cases}
      x^{j - 1} & \text{if $j \leq N - l$}, \\
      x^j & \text{otherwise}.
    \end{cases}
  \end{equation}
  Thus we conclude that
  \begin{equation}
    \Gamma(w_1, \dotsc, w_N; l) = e_l(w_1, \dotsc, w_N) B_N(w_1, \dotsc, w_N) \prod^N_{k = 1} \frac{1}{w_k}.
  \end{equation}
  Hence by \eqref{eq:express_Gamma_m(w)} and the identity $\sum^N_{l = 1} e_l(w_1, \dotsc, w_N) z^l = \prod^N_{k = 1} (1 + zw_k) - 1$,
  \begin{equation} \label{eq:ess_term_of_tilde_I}
    \Gamma_m(w_1, \dotsc, w_N) = \left[ \prod^N_{k = 1} \left( \frac{b_m - w_k}{b_m} \right) - 1 \right] B_N(w_1, \dotsc, w_N) \prod^N_{k = 1} \frac{1}{w_k}.
  \end{equation}
  Finally, substituting \eqref{eq:ess_term_of_tilde_I} into \eqref{eq:simplified_I^n_Y}, and using the telescoping trick, we find that the left-hand side of \eqref{eq-identity} is
  \begin{equation} \label{eq:final_I_Y(M;F)_telescoping}
    \begin{split}
      \\
      {}& \sum^{\infty}_{m = M + 1} \dashint_{C} \frac{dw_1}{w_1} \dotsi \dashint_{C} \frac{dw_N}{w_N} \left[ \prod^N_{k = 1} \left( \frac{b_m - w_k}{b_m} \right) - 1 \right] B_N(w_1, \dotsc, w_N) \\
      & \times \prod^N_{r = 1} \left[ \prodprime^m_{k = y_r} \left( \frac{b_k}{b_k - w_r} \right) e^{-w_r t} \right] F(w_1, \dotsc, w_N) \\
      = {}& \dashint_{C} \frac{dw_1}{w_1} \dotsi \dashint_{C} \frac{dw_N}{w_N} B_N(w_1, \dotsc, w_N) \prod^N_{r = 1} \left[ \prodprime^M_{k = y_r} \left( \frac{b_k}{b_k - w_r} \right) e^{-w_r t} \right] F(w_1, \dotsc, w_N).
    \end{split}
  \end{equation}
  This completes the proof, and the convergence of \eqref{eq:final_I_Y(M;F)_telescoping} is due to that the radius of $C$ is larger than $\sup_{k \in \intZ}(b_k)$.
\end{proof}

\paragraph{Rightmost particle}

Similarly, we can find the distribution of $x_1(t)$, the rightmost particle. The argument is parallel, and we stress the difference between the two derivations.

\begin{proposition} \label{prop:rightmost}
  For the $N$-particle $q$-TAZRP model with initial condition $Y = (y_1, \dotsc, y_N)$, the rightmost particle $x_1$ satisfies
  \begin{multline}
    \mathbb{P}_Y(x_1(t) \leq M) = (-1)^N q^{N(N - 1)/2} \dashint_{\tilde{C}_1} \frac{dw_1}{w_1} \dotsi \dashint_{\tilde{C}_N} \frac{dw_N}{w_N} B_N(w_1, \dotsc, w_N) \\
    \times \prod^N_{j = 1} \left[ \prodprime^M_{k = y_j} \left( \frac{b_k}{b_k - w_j} \right) e^{-w_j t} \right],
  \end{multline}
  where the contours $\tilde{C}_1, \dotsc, \tilde{C}_N$ are specified as in Theorem \ref{main-theorem-1}.
\end{proposition}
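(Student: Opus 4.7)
The strategy parallels the proof of Proposition \ref{prop:leftmost}: the plan is to prove by induction on $N$ a rightmost-particle analog of Proposition \ref{thm:left_most}, namely that for a meromorphic function $F$ continuous on the nested contours,
\begin{equation*}
  \sum_{X \text{ with } x_1 \leq M} I(x_1,\dotsc,x_N) = (-1)^N q^{N(N-1)/2} \dashint_{\tilde{C}_1} \frac{dw_1}{w_1} \dotsi \dashint_{\tilde{C}_N} \frac{dw_N}{w_N}\, F\, B_N \prod^N_{j=1}\left[\prodprime^M_{k=y_j}\frac{b_k}{b_k - w_j}\, e^{-w_jt}\right],
\end{equation*}
where $I$ is defined as in \eqref{eq:defn_I(x)} but with each of the $N$ integrations now placed on the appropriate $\tilde{C}_j$. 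Specializing to $F \equiv 1$ and summing the transition probability of Proposition \ref{Wang-Waugh} against $\sum_{X: x_1 \leq M}$ then yields the proposition, once one checks that the single large contour $C$ of Proposition \ref{Wang-Waugh} can be replaced by the nested $\tilde{C}_j$'s at the cost only of residues that are precisely absorbed into the prefactor $(-1)^N q^{N(N-1)/2}$.

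The base case $N = 1$ reduces to $\sum_{x_1 \leq M} (-1/b_{x_1}) \prodprime^{x_1}_{k=y_1} b_k/(b_k - w_1) = -(1/w_1) \prodprime^{M}_{k=y_1} b_k/(b_k - w_1)$, proved by the very same telescoping identity \eqref{eq2-lemma} used in \eqref{eq1-lemma}, but summed in the opposite direction. The new input is convergence of the tail as $m \to -\infty$: the extended product reduces to $\prod^{y_1-1}_{k=m+1}(b_k - w_1)/b_k$, which decays provided $|(b_k - w_1)/b_k| < 1$ uniformly for large negative $k$. This is the reason $\tilde{C}_1$ must hug the cluster $\{b_k\}$ and in particular avoid the origin; the surviving boundary contribution sits at $m = M$, with the opposite sign from the leftmost case and accounting for one factor of $-1$.

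For the inductive step, I would repeat the decomposition used in the proof of Proposition \ref{thm:left_most} verbatim: split the sum by the common value $m = x_1 = \dotsb = x_l$ of the $l$ rightmost tied particles, break $S_N$ into the cosets $S_N(I_l)$ through the bijection \eqref{eq:defn_phi_I(lambda_tau)}, and apply the induction hypothesis to the inner integrals in the $N - l$ variables indexed by $I_l^c$. The sub-collection $\tilde{C}_{j_1}, \dotsc, \tilde{C}_{j_{N-l}}$ inherits the nesting property, so the hypothesis applies. The symmetrization \eqref{eq:basic_id} and the algebraic reductions culminating in the compact form \eqref{eq:ess_term_of_tilde_I} of $\Gamma_m$ are formal and transfer without change; the final telescoping now runs $\sum_{m=-\infty}^M$, the $m \to -\infty$ boundary vanishes by the same convergence used in the base case, and only the $m = M$ term survives.

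The main obstacle is pinning down the prefactor $(-1)^N q^{N(N-1)/2}$. The sign $(-1)^N$ is transparent — one flip per coordinate telescope. The power $q^{N(N-1)/2}$ is subtler: it reflects how the nested contours reorganize the contributions of the $1/(qw_i - w_j)$ singularities inside $B_N$. When $w_j \in \tilde{C}_j$ and $w_i \in \tilde{C}_i$ with $j > i$ are placed in nested position, the point $qw_i$ lies strictly inside $\tilde{C}_j$, so the relevant residue evaluations differ from those on the single large contour $C$ by exactly one factor of $q$ per inversion, accumulating to $q^{N(N-1)/2}$. I would verify this bookkeeping first for $N = 1, 2$ by explicit contour manipulation, and then cross-check the general pattern against the $n = N$ specialization of \eqref{eq:distr_X_N-m+1}, in which the prefactor $(-1)^N q^{N(N-1)/2}$ is already implicit in the coefficient $q^{-N^2}\tilde{c}_{\{1,\dotsc,N\}}(N)$.
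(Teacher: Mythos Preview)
Your overall plan---prove a rightmost analog of Proposition~\ref{thm:left_most} by induction and specialize to $F\equiv 1$---is exactly what the paper does. But two of your key claims are wrong, and together they leave a real gap.

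First, the assertion that ``the algebraic reductions culminating in the compact form \eqref{eq:ess_term_of_tilde_I} of $\Gamma_m$ are formal and transfer without change'' is false. In the rightmost case the induction hypothesis, applied to the $l$ particles \emph{to the left} of the tied block, already contributes a factor $(-1)^l q^{l(l-1)/2}$; and the tied block now sits at the \emph{first} $N-l$ positions rather than the last $l$. The resulting function---call it $\Sigma_m$---is structurally different from $\Gamma_m$: the analog of \eqref{eq:to_be_in_matrix_forms} has $(1-w_{i_k}/b_{m+1})$ factors in place of $w_{i_k}$, and the generating polynomial in $z$ becomes $\det\bigl([z(1-w_k/b_{m+1})+q^{j-1}w_k]w_k^{j-1}\bigr)$ rather than \eqref{eq:det_poly_in_z}. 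The prefactor $(-1)^N q^{N(N-1)/2}$ is extracted by evaluating this polynomial at the special point $z=-q^{N-1}b_{m+1}$, where it collapses to $(-1)^N q^{N(N-1)}\Delta_N$. This is a genuinely new computation, not a transcription of the leftmost one.

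Second, and relatedly, your account of the $q^{N(N-1)/2}$ as arising from residues picked up when deforming $C$ to the nested $\tilde C_j$ is not how it works. In the paper the deformations from $C$ to $\tilde C_j$ are performed \emph{without} crossing poles; the power of $q$ is purely algebraic, coming from the determinant identity above. Your heuristic ``one factor of $q$ per inversion'' has no mechanism behind it here.

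A smaller point: your tail convergence argument (requiring $|(b_k-w_1)/b_k|<1$ on $\tilde C_1$) is both unnecessary and not generally valid for contours enclosing all $b_k$. The paper sidesteps this entirely by summing only over $-\tilde M\le x_N\le\cdots\le x_1\le M$ with $-\tilde M<\min_j y_j$; the lower boundary term then vanishes because the integrand becomes a polynomial in each $w_j$ with no poles inside $\tilde C_j$. You should adopt this finite-sum device rather than argue about decay.
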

The proof of this proposition is by expressing, similar to \eqref{eq-marginal},
\begin{equation} \label{eq-marginal-1}
  \mathbb{P}_Y(x_1(t) \leq M) = \sum_{X = (x_1,\dotsc, x_N) \text{ with } -\tilde{M} \leq x_N \leq \cdots \leq x_1\leq M} P_Y(X;t)
\end{equation}
where $-\tilde{M}$ is a small enough integer, say, $-\tilde{M} < y_N$, and then using the following technical result with $F(w_1, \dotsc, w_N) = 1$.
\begin{proposition} \label{thm:right_most}
  Let $q \in (0,1)$, $Y = (y_1, \dotsc, y_N) \in \intZ^N$, positive numbers $b_k$ for $k \in \mathbb{Z}$ and meromorphic function $F(w_1, \dotsc, w_N)$ be defined as in Proposition \ref{thm:left_most}. Let $C$ be a positively oriented contour containing all $b_k$, and we further require that $F(w_1, \dotsc, w_N)$ is continuous if all $w_k$ are on or within $C$. Moreover, let $\tilde{C}_1, \dotsc, \tilde{C}_N$ be positively oriented contours such that each of them encloses all $b_k$, none of them encloses $0$, and $\tilde{C}_j$ enclose $q \cdot \tilde{C}_i$ if $j > i$. We also require that as the contours for $w_k$ deform gradually from $C$ to $\tilde{C}_k$, the meromorphic function $F(w_1, \dotsc, w_N)$ does not meet any singularity. Suppose $I(x_1, \dotsc, x_N)$ are defined as in \eqref{eq:defn_I(x)} of Proposition \ref{thm:left_most} with the contour $C$ specified in this proposition. We have
  \begin{multline} \label{eq:general_n_rightmost}
    \sum_{\substack{X = (x_1,\dotsc, x_N) \\ \text{with } -\tilde{M} \leq x_N \leq \cdots \leq x_1\leq M}} I(x_1, \dotsc, x_N) = (-1)^N q^{N(N - 1)/2} \dashint_{\tilde{C}_1} \frac{dw_1}{w_1} \dotsi \dashint_{\tilde{C}_N} \frac{dw_N}{w_N} \\
    \times F(w_1, \dotsc, w_N) B_N(w_1, \dotsc, w_N) \prod^N_{j = 1} \left[ \prodprime^M_{k = y_j} \left( \frac{b_k}{b_k - w_j} \right) e^{-w_j t} \right],
  \end{multline}
  where $-\tilde{M}$ is a small enough integer, say, $-\tilde{M} < \min(y_1, \dotsc, y_N)$.
\end{proposition}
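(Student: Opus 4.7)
The plan is to prove Proposition \ref{thm:right_most} by induction on $N$, mirroring the skeleton of the proof of Proposition \ref{thm:left_most} but managing three new features: the summation in each $x_j$ is over the bounded range $[-\tilde M, M]$ rather than a half-line, the output contours $\tilde C_1, \dotsc, \tilde C_N$ exclude the origin and are nested, and a prefactor $(-1)^N q^{N(N-1)/2}$ now appears.

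For the base case $N = 1$, I would apply the telescoping identity \eqref{eq2-lemma} to obtain
\[
\sum_{x_1 = -\tilde M}^M \frac{-1}{b_{x_1}} \sideset{}{'}\prod_{k = y_1}^{x_1} \frac{b_k}{b_k - w_1} = \frac{1}{w_1}\left[ \sideset{}{'}\prod_{k = y_1}^{-\tilde M - 1} \frac{b_k}{b_k - w_1} - \sideset{}{'}\prod_{k = y_1}^{M} \frac{b_k}{b_k - w_1}\right].
\]
Since $-\tilde M < y_1$, the first product is a polynomial in $w_1$ with value $1$ at $w_1 = 0$, so its integral over $C$ against $F(w_1) e^{-w_1 t}/w_1$ contributes exactly $F(0)$ via the residue at the origin. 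For the second product, deforming $C \rightsquigarrow \tilde C_1$ picks up the same residue $F(0)$ at $w_1 = 0$, and the two polynomial contributions cancel to leave $-\dashint_{\tilde C_1} F(w_1) e^{-w_1 t}/w_1 \cdot \sideset{}{'}\prod_{k=y_1}^M (b_k/(b_k - w_1))\,dw_1$, which matches the $(-1)^1 q^0$ prefactor.

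For the inductive step, I stratify the configurations by the common value $m = x_1 = \dotsb = x_l$ of the top $l$ particles, where $1 \leq l \leq N$, with $-\tilde M \leq x_N \leq \dotsb \leq x_{l+1} \leq m - 1$. The inner summation over the $N - l$ lower particles then falls under the induction hypothesis with upper bound $m - 1$, supplying the prefactor $(-1)^{N-l} q^{(N-l)(N-l-1)/2}$ and the nested contours $\tilde C_{j_1}, \dotsc, \tilde C_{j_{N-l}}$ for the variables $w_{j_1}, \dotsc, w_{j_{N-l}}$. Decomposing $S_N = \bigsqcup_{I_l} S_N(I_l)$ with $S_N(I_l) = \{\sigma : \sigma(\{1, \dotsc, l\}) = I_l\}$ and using the factorization $A_\sigma = G_{I_l} A_\lambda A_\tau$ analogous to \eqref{eq:decomp_A_{sigma}} (with $G_{I_l}$ now indexed by pairs $i \in I_l$, $j \in I_l^c$ with $i > j$), the sum over $S_N(I_l)$ is reorganized. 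The purely algebraic identity \eqref{eq:defn_Gamma(w;l)}--\eqref{eq:ess_term_of_tilde_I} then collapses the sum over $l$ in the same way as in the leftmost proof.

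Telescoping in $m$ from $-\tilde M$ to $M$ produces two boundary contributions: the $m = M$ term yields the expected $\prod_r \sideset{}{'}\prod_{k=y_r}^M$, while the $m = -\tilde M - 1$ term gives a polynomial in $w_1, \dotsc, w_N$. Finally I deform each remaining outer contour $C$ to the nested $\tilde C_j$ for $j \in I_l$; since the $m = -\tilde M - 1$ polynomial has no pole other than the $1/w_r$ at the origin, its contributions over $C$ are purely residues at $w_r = 0$, and these combine with the residues extracted from the main $m = M$ term during the contour deformation to produce the final clean $\tilde C$-integral formula. The main obstacle I anticipate is the combinatorial bookkeeping required to verify that the iterated residues at $w_1 = \dotsb = w_N = 0$ for the $m = M$ term combine to yield exactly the prefactor $(-1)^N q^{N(N-1)/2}$, the $q$-power tracing to the recursive identity $B_N(w_1, \dotsc, w_{k-1}, 0, w_{k+1}, \dotsc, w_N) = q^{-(k-1)} B_{N-1}(w_1, \dotsc, \widehat{w}_k, \dotsc, w_N)$ applied iteratively, while simultaneously the residues arising from the polynomial $m = -\tilde M - 1$ contribution must cancel exactly. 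I would verify $N = 2$ directly to fix the pattern of signs, $q$-powers, and cancellations before completing the induction.
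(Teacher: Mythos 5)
Your overall architecture --- induction on $N$, stratification by the position of the top cluster of particles, application of the induction hypothesis to the lower particles, decomposition of $S_N$ with the factorization of $A_\sigma$, a Vandermonde-type identity to collapse the sum over cluster sizes, and telescoping in $m$ --- is exactly the paper's, and your base case is correct. (The paper phrases the base case slightly differently: it deforms $C$ to $\tilde{C}_1$ for the entire telescoped sum, which has no pole at the origin, and then the polynomial boundary term integrates to zero over $\tilde{C}_1$; your explicit residue cancellation at $w_1=0$ is equivalent.)

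Two points in your inductive step would derail the bookkeeping if followed literally. First, the algebraic identity you need is not \eqref{eq:defn_Gamma(w;l)}--\eqref{eq:ess_term_of_tilde_I} verbatim: the induction hypothesis injects, for each lower variable $w_{i_k}$, a factor $(b_{m+1}-w_{i_k})/(b_{m+1}w_{i_k})$ together with the prefactor $(-1)^{l}q^{l(l-1)/2}$, so the determinant whose $z^l$-coefficients must be summed is \eqref{eq:det_poly_in_z_rightmost} rather than \eqref{eq:det_poly_in_z}; the paper evaluates it at $z=-q^{N-1}b_{m+1}$ in \eqref{eq:Sigma(m)}, and it is precisely this evaluation, combined with the induction prefactors, that produces both the global constant $(-1)^N q^{N(N-1)/2}$ and the telescoping factor $1-\prod_k (b_{m+1}-w_k)/b_{m+1}$. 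The constant does \emph{not} come from iterated residues at the origin: in your own scheme the origin residues of the $m=M$ boundary term and of the $m=-\tilde{M}-1$ boundary term are equal (both products evaluate to $1$ at $w_r=0$, exactly as in your $N=1$ argument), so they cancel pairwise and contribute nothing to the prefactor. Second, your order of operations --- collapse the sum over $l$, telescope, and only then deform the outer contours --- cannot be carried out as stated, because for different index sets $I_l$ the $N$-fold integrals live on different mixtures of $C$'s and $\tilde{C}$'s and cannot be combined under a single integral sign to apply the determinant identity. You must deform the outer contours from $C$ to the corresponding $\tilde{C}$'s \emph{before} summing over $I_l$; this is harmless at that stage since those integrands carry no $1/w$ factor yet and hence no pole at the origin is crossed. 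With these two corrections your plan coincides with the paper's proof.
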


\begin{proof}[Proof of Proposition \ref{thm:right_most}]
  We prove by the induction on $N$. For the $N = 1$ case, we use the telescoping trick as for \eqref{eq:n=1_left_most}, and have
  \begin{equation}
    \begin{split}
      \sum^M_{x_1 = -\tilde{M}} \left( \frac{-1}{b_{x_1}} \prodprime^{x_1}_{k = y_1} \frac{b_k}{b_k - w_1} \right) = {}& \frac{1}{w} \prodprime^{-\tilde{M} - 1}_{k = y_1} \frac{b_k}{b_k - w_1} - \frac{1}{w} \prodprime^M_{k = y_1} \frac{b_k}{b_k - w_1} \\
      = {}& \frac{1}{w} \prod^{y_1 - 1}_{k = -\tilde{M}} \frac{b_k - w_1}{b_k} - \frac{1}{w} \prodprime^M_{k = y_1} \frac{b_k}{b_k - w_1},
    \end{split}
  \end{equation}
  where we use that $-\tilde{M} < y_1$. Since by assumption, $F(w_1)$ has no pole for $w_1$ on or within the contour $C$, we have
  \begin{equation}
    \begin{split}
      \sum^M_{x_1 = -\tilde{M}} I(x_1) = {}& \dashint_C dw_1 F(w_1) \left[ \sum^M_{x_1 = -\tilde{M}_1} \left( \frac{-1}{b_{x_1}} \prodprime^{x_1}_{k = y_1} \frac{b_k}{b_k - w_1} \right) \right] e^{-w_1 t} \\
      = {}& \dashint_{\tilde{C}_1} dw_1 F(w_1) \left[ \sum^M_{x_1 = -\tilde{M}_1} \left( \frac{-1}{b_{x_1}} \prodprime^{x_1}_{k = y_1} \frac{b_k}{b_k - w_1} \right) \right] e^{-w_1 t} \\
      = {}& \dashint_{\tilde{C}_1} \frac{dw_1}{w_1} F(w_1) \left( \prod^{y_1 - 1}_{k = -\tilde{M}} \frac{b_k - w_1}{b_k} \right) e^{-w_1 t} - \dashint_{\tilde{C}_1} \frac{dw_1}{w_1} F(w_1) \left( \prodprime^M_{k = y_1} \frac{b_k}{b_k - w_1} \right) e^{-w_1 t} \\
      = {}& - \dashint_{\tilde{C}_1} \frac{dw_1}{w_1} F(w_1) \left( \prodprime^M_{k = y_1} \frac{b_k}{b_k - w_1} \right) e^{-w_1 t}.
    \end{split}
  \end{equation}
  Hence the $N = 1$ case of \eqref{eq:general_n_rightmost} is proved.

  Assuming that \eqref{eq:general_n_rightmost} holds if $N$ is replaced by  $1, 2, \dotsc, N - 1$, analogous to \eqref{eq:1135-219} we write the left-hand side of \eqref{eq:general_n_rightmost} as
  \begin{equation}
    \sum^{M - 1}_{m = -\tilde{M} - 1} \sum^{N - 1}_{l = 0} J^{(l)}(m), \quad \text{where} \quad J^{(l)}(m) = \sum_{\tilde{M} \leq x_N \leq \dotsb \leq x_{N - l + 1} \leq m} I(\underbrace{m + 1, \dotsc, m + 1}_{x_1 =\dotsb = x_{N-l} = m + 1}, x_{N - l + 1}, \dotsc, x_N).
  \end{equation}
  Then analogous to \eqref{eq:1235-219}, we have with the index $U = (u_1, \dotsc, u_l) \in \Weylchamber[l]$,
  \begin{equation}
    \begin{split}
      J^{(l)}(m) = {}& \frac{(-1/b_{m + 1})^{N - l}}{[N - l]_q!}\sum_{ -\tilde{M} \leq u_l \leq \dotsb \leq u_1 \leq m} \frac{1}{W(U)} \left( \prod^l_{k = 1} \frac{-1}{b_{u_k}} \right) \dashint_{C} dw_1 \dotsi \dashint_{C} dw_N ~ F(w_1, \dotsc, w_N) \\
      & \times  \sum_{\sigma \in S_N} A_{\sigma}(w_1, \dotsc, w_N) \prod^{N - l}_{j = 1} \left[ \prodprime^{m + 1}_{k = y_{\sigma(j)}} \left( \frac{b_k}{b_k - w_{\sigma(j)}} \right) e^{-w_{\sigma(j)} t} \right] \\
      & \times \prod^{l}_{j = 1} \left[ \prodprime^{u_j}_{k = y_{\sigma(j)}} \left( \frac{b_k}{b_k - w_{\sigma(N - l + j)}} \right) e^{-w_{\sigma(N - l + j)} t} \right],
    \end{split}
  \end{equation}
  such that $J^{(0)}(m)$ degenerates to $I(m + 1, \dotsc, m + 1)$ in \eqref{eq:defn_I(x)}. To evaluate $J^{(l)}(m)$, we also decompose $S_N$ into $\binom{N}{l}$ disjoint subsets $S_N(I_l)$ that are defined in \eqref{eq:defn_S_n(I)}. Similar to \eqref{141-220}, we have
  \begin{multline} \label{eq:J^(l)(m)_defn}
    J^{(l)}(m) = \frac{(-1/b_{m + 1})^{N - l}}{[N - l]_q!}\sum_{I_l \subseteq \{ 1, \dotsc, N \}} \sum_{\tau \in S_{N - l}} \dashint_{C} dw_{j_1} \dotsi \dashint_{C} dw_{j_{N - l}} A_{\tau}(w_{j_1}, \dotsc, w_{j_{N - l}}) \\
    \times \prod^{N - l}_{r = 1}  \prodprime^{m + 1}_{k = y_{j_{\tau(r)}}} \left( \frac{b_k}{b_k - w_{j_{\tau(r)}}} \right) e^{-w_{j_{\tau(r)}} t} \\
    \times \left(
      \begin{aligned}
        & \sum_{ -\tilde{M} \leq u_l \leq \dotsb \leq u_1 \leq m}  \frac{1}{W(U)} \left( \prod^l_{k = 1} \frac{-1}{b_{u_k}} \right) \dashint_{C} dw_{i_1} \dotsi \dashint_{C} dw_{i_l} F(w_1, \dotsc, w_N) \\
        & \times G_{I_l}(w_1, \dotsc, w_N) \sum_{\lambda \in S_l} A_{\lambda}(w_{i_1}, \dotsc, w_{i_l}) \prod^l_{r = 1}  \prodprime^{u_r}_{k = y_{j_{\lambda(r)}}} \left( \frac{b_k}{b_k - w_{i_{\lambda(r)}}} \right) e^{-w_{i_{\lambda(r)}} t}
      \end{aligned}
    \right),
  \end{multline}
  where the notations $I_l = \{ i_1, \dotsc, i_l \}$ and $\{ 1, \dotsc, N \} \setminus I_l = \{ j_1, \dotsc, j_{N - l} \}$ are the same as in \eqref{141-220}, and the decompositions of $\sigma$ into $\lambda$ and $\tau$, and of $A_{\sigma}$ into the product of $G_{I_l} A_{\lambda} A_{\tau}$ are given by \eqref{eq:defn_phi_I(lambda_tau)}, \eqref{eq:decomp_A_{sigma}} and \eqref{eq:defn_G_I_l}.

  Applying the induction hypothesis to \eqref{eq:J^(l)(m)_defn}, we have
  \begin{multline} \label{eq:J^(l)(m)_mixed}
    J^{(l)}(m) = \frac{(-1/b_{m + 1})^{N - l}}{[N - l]_q!}\sum_{I_l \subseteq \{ 1, \dotsc, N \}} \sum_{\tau \in S_{N - l}} \dashint_{C} dw_{j_1} \dotsi \dashint_{C} dw_{j_{N - l}} A_{\tau}(w_{j_1}, \dotsc, w_{j_{N - l}}) \\
    \times \prod^{N - l}_{r = 1}  \prodprime^{m + 1}_{k = y_{j_{\tau(r)}}} \left( \frac{b_k}{b_k - w_{j_{\tau(r)}}} \right) e^{-w_{j_{\tau(r)}} t} \\
    \times \left(
    \begin{aligned}
      (-1)^l q^{l(l - 1)/2} \dashint_{\tilde{C}_{i_1}} \frac{dw_{i_1}}{w_{i_1}} \dotsi \dashint_{\tilde{C}_{i_l}} \frac{dw_{i_l}}{w_{i_l}} & F(w_1, \dotsc, w_N) G_{I_l}(w_1, \dotsc, w_N) B_l(w_{i_1}, \dotsc, w_{i_l}) \\
    & \times \prod^l_{r = 1} \left[ \prodprime^m_{k = y_{i_r}} \left( \frac{b_k}{b_k - w_{i_r}} \right) e^{-w_{i_r} t} \right]
    \end{aligned}
    \right).
  \end{multline}
  By the property of the contours $C$ and $\tilde{C}_j$, we have that the integral contours for $w_{j_r}$ in \eqref{eq:J^(l)(m)_mixed} can be deformed from $C$ to $\tilde{C}_{j_r}$ for each $r = 1, \dotsc, N - l$. Then by \eqref{eq:basic_id}, we further have
  \begin{multline}
    J^{(l)}(m) = \left( \frac{-1}{b_{m + 1}} \right)^{N - l} (-1)^l q^{l(l - 1)/2} \dashint_{\tilde{C}_1} dw_1 \dotsi \dashint_{\tilde{C}_N} dw_N \Sigma_m(w_1, \dotsc, w_N; l) \\
    \times F(w_1, \dotsc, w_N) \prod^N_{r = 1} \left[ \prodprime^{m + 1}_{k = y_r} \left( \frac{b_k}{b_k - w_r} \right) e^{-w_r t} \right],
  \end{multline}
  where, analogous to $\Gamma(w_1, \dotsc, w_N; l)$ in \eqref{eq:defn_Gamma(w;l)}, we define
  \begin{multline}
    \Sigma_m(w_1, \dotsc, w_N; l) = \sum_{I_l \subseteq \{ 1, \dotsc, N \} } G_I(w_1, \dotsc, w_N)\prod^l_{k = 1} \frac{(b_{m + 1} - w_{i_k})}{b_{m + 1} w_{i_k}} \\
    \times B_l(w_{i_1}, w_{i_2} \dotsc, w_{i_l}) B_{N - l}(w_{j_1}, \dotsc, w_{j_{N - l}}).
  \end{multline}

  Analogous to $\Gamma_m(w_1, \dotsc, w_N)$ in \eqref{eq:express_Gamma_m(w)}, we define
  \begin{equation} \label{eq:new_defn_Sigma_m}
    \Sigma_m(w_1, \dotsc, w_N) = \sum^N_{l = 0} \left( \frac{-1}{b_{m + 1}} \right)^{N - l} (-1)^l q^{l(l - 1)/2} \Sigma_m(w_1, \dotsc, w_N; l),
  \end{equation}
  and then similar to \eqref{eq:simplified_I^n_Y}, we have that the left-hand side of \eqref{eq:general_n_rightmost} is expressed as
  \begin{multline} \label{eq:simplified_J^n_Y}
    \sum^{M - 1}_{m = -\tilde{M} - 1} \dashint_{\tilde{C}} dw_1 \dotsi \dashint_{\tilde{C}_N} dw_N \left( \Sigma_m(w_1, \dotsc, w_N) - (-1)^N q^{N(N - 1)/2} \prod^N_{k = 1} \frac{b_{m + 1} - w_k}{b_{m + 1} w_k} B_N(w_1, \dotsc, w_N) \right) \\
    \times \prod^N_{r = 1} \left[ \prodprime^{m + 1}_{k = y_r} \left( \frac{b_k}{b_k - w_r} \right) e^{-w_r t} \right]  F(w_1, \dotsc, w_N).
  \end{multline}
  (Formally, \eqref{eq:simplified_J^n_Y} has an extra term comparing with \eqref{eq:simplified_I^n_Y}, but this difference results from that $\Sigma_m(w_1, \dotsc, w_N)$ in \eqref{eq:new_defn_Sigma_m} is the sum of $N + 1$ terms, while $\Gamma_m(w_1, \dotsc, w_N)$ in \eqref{eq:express_Gamma_m(w)} is the sum of $N$ terms.) Similar to \eqref{eq:to_be_in_matrix_forms}, we have
  \begin{multline} \label{eq:to_be_in_matrix_forms_rightmost}
    q^{(N - l)(N - l - 1)/2} (-1)^{N(N - 1)/2} \prod^N_{k = 1} w_k \prod_{1 \leq i < j \leq N}(qw_i - w_j) \Sigma_m(w_1, \dotsc, w_N; l) \\
    = \sum_{I_l \subseteq \{ 1, \dotsc, N \}} (-1)^{(i_1 + \dotsb + i_l) - l(l + 1)/2} \prod^l_{k = 1} (1 - w_{i_k}/b_{m + 1}) \prod^{N - l}_{k = 1} w_{j_k} \\
    \times \prod_{1 \leq k < r \leq l} (w_{i_r} - w_{i_k}) \prod_{1 \leq k < r \leq N - l} (qw_{j_r} - qw_{j_k}) \prod^l_{k = 1} \prod^{N - l}_{r = 1} (qw_{j_r} - w_{i_k}).
  \end{multline}
  It is not hard to check that for any intex set $I_l$, the term in the sum \eqref{eq:to_be_in_matrix_forms_rightmost} is equal to
  \begin{equation} \label{eq:determinant_Sigma(m;l)}
    \det \left( f^{I_l}_{j, k}(w_k) \right)^N_{j, k = 1}, \quad \text{where} \quad f^{I_l}_{j, k}(x) =
    \begin{cases}
      (1 - \frac{x}{b_{m + 1}}) x^j & \text{if $k \in I_l$}, \\
      x (qx)^{j - 1} & \text{if $k \in \{ 1, \dotsc, N \} \setminus I_l$}.
    \end{cases}
  \end{equation}
  Then by summing up all the $\binom{N}{l}$ terms, we have that the left-hand side of \eqref{eq:to_be_in_matrix_forms_rightmost} is the $z^l$ coefficient of the following polynomial in $z$
  \begin{equation} \label{eq:det_poly_in_z_rightmost}
    \det \left( [z(1 - w_k/b_{m + 1}) + q^{j - 1}w_k] w^{j - 1}_k \right)^N_{j, k = 1}.
  \end{equation}
Hence
  \begin{equation} \label{eq:Sigma(m)}
    \begin{split}
      & (-q)^{N(N - 1)/2} \prod^N_{k = 1} w_k \prod_{1 \leq i < j \leq N}(qw_i - w_j) \Sigma_m(w_1, \dotsc, w_N) \\
      = {}& \sum^N_{l = 0} \left( \frac{-1}{b_{m + 1}} \right)^N b^l_{m + 1} q^{(N - 1)l} \left( \text{$z^l$ coefficient of } \det \left( [z(1 - w_k/b_{m + 1}) + q^{j - 1}w_k] w^{j - 1}_k \right)^N_{j, k = 1} \right) \\
      = {}& \left( \frac{-1}{b_{m + 1}} \right)^n \left. \det \left( [z(1 - w_k/b_{m + 1}) + q^{j - 1}w_k] w^{j - 1}_k \right)^N_{j, k = 1} \right\rvert_{z = -q^{N - 1} b_{m + 1}} \\
      = {}& (-1)^N q^{N(N - 1)} \Delta_N(w_1, \dotsc, w_N).
    \end{split}
  \end{equation}
  Then by \eqref{eq:Sigma(m)} we can express $\Sigma_m(w_1, \dotsc, w_N)$ in terms of $B_N(w_1, \dotsc, w_N)$, and by plugging this expression into \eqref{eq:simplified_J^n_Y} we have that the left-hand side of \eqref{eq:general_n_rightmost} is
  \begin{multline}
    (-1)^N q^{N(N - 1)/2} \sum^{M - 1}_{m = -\tilde{M} - 1} \dashint_{\tilde{C}_1} \frac{dw_1}{w_1} \dotsi \dashint_{\tilde{C}_N} \frac{dw_N}{w_N} \left[ 1 - \left( \prod^N_{k = 1} \frac{b_{m + 1} - w_k}{b_{m + 1}} \right) \right] B_N(w_1, \dotsc, w_N) \\
    \times \prod^N_{r = 1} \left[ \prodprime^{m + 1}_{k = y_r} \left( \frac{b_k}{b_k - w_r} \right) e^{-w_r t} \right]  F(w_1, \dotsc, w_N) \\
    = (-1)^N q^{N(N - 1)/2} \left( \dashint_{\tilde{C}_1} \frac{dw_1}{w_1} \dotsi \dashint_{\tilde{C}_N} \frac{dw_N}{w_N} \prod^N_{r = 1} \left[ \prodprime^M_{k = y_r} \left( \frac{b_k}{b_k - w_r} \right) e^{-w_r t} \right]  F(w_1, \dotsc, w_N) \right. \\
    - \left. \dashint_{\tilde{C}_1} \frac{dw_1}{w_1} \dotsi \dashint_{\tilde{C}_N} \frac{dw_N}{w_N} \prod^N_{r = 1} \left[ \prodprime^{-\tilde{M} - 1}_{k = y_r} \left( \frac{b_k}{b_k - w_r} \right) e^{-w_r t} \right]  F(w_1, \dotsc, w_N) \right),
  \end{multline}
  where we use the telescoping trick for the identity. Hence we prove Theorem \ref{thm:right_most} by noting that
  \begin{equation}
    \dashint_{\tilde{C}_1} \frac{dw_1}{w_1} \dotsi \dashint_{\tilde{C}_N} \frac{dw_N}{w_N} \prod^N_{r = 1} \left[ \prodprime^{-\tilde{M} - 1}_{k = y_r} \left( \frac{b_k}{b_k - w_r} \right) e^{-w_r t} \right]  F(w_1, \dotsc, w_N) = 0,
  \end{equation}
  which is due to that $\tilde{M} < \min(y_1, \dotsc, y_N)$ and $F(w_1, \dotsc, w_N)$ has no singularity if all $w_k$ are on or within $\tilde{C}_k$.
\end{proof}

\subsubsection{Relation between the two types of nested contour integrals.}

Using the residue formula inductively, we have that if the contours $C$ and $\tilde{C}_1, \dotsc, \tilde{C}_N$ are defined as in Theorem \ref{main-theorem-1}, then
\begin{multline} \label{eq:identical_in_nested}
  \dashint_C \frac{dw_1}{w_1} \dotsi \dashint_C \frac{dw_N}{w_N} B_N(w_1, \dotsc, w_N) \prod^N_{j = 1} \left[ \prodprime^M_{k = y_j} \left( \frac{b_k}{b_k - w_j} \right) e^{-w_j t} \right] = \\
  \sum^N_{l = 0} \sum_{I_l \subseteq \{ 1, \dotsc, N \}} q^{(i_1 + \dotsb + i_l) - (2N -l + 1)l/2} \\
  \times \dashint_{\tilde{C}_{i_1}} \frac{dw_{i_1}}{w_{i_1}} \dotsi \dashint_{\tilde{C}_{i_l}} \frac{dw_{i_l}}{w_{i_l}} B_l(w_{i_1}, \dotsc, w_{i_l}) \prod^l_{r = 1} \left[ \prodprime^M_{k = y_{i_r}} \left( \frac{b_k}{b_k - w_{i_r}} \right) e^{-w_{i_r} t} \right],
\end{multline}
where $I_l = \{ i_1, \dotsc, i_l \}$ with $i_k$ in ascending order, and the term corresponding to $l = 0$ and $I_l = \emptyset$ is taken as $1$. On the other hand, we have
\begin{multline} \label{eq:nested_in_identical}
  \dashint_{\tilde{C}_1} \frac{dw_1}{w_1} \dotsi \dashint_{\tilde{C}_N} \frac{dw_N}{w_N} B_N(w_1, \dotsc, w_N) \prod^N_{j = 1} \left[ \prodprime^M_{k = y_j} \left( \frac{b_k}{b_k - w_j} \right) e^{-w_j t} \right] = \\
  \sum^N_{l = 0} \sum_{I_l \subseteq \{ 1, \dotsc, N \}} (-1)^{N - l} q^{(i_1 + \dotsb + i_l) - l - N(N - 1)/2} \\
  \times \dashint_C \frac{dw_{i_1}}{w_{i_1}} \dotsi \dashint_C \frac{dw_{i_l}}{w_{i_l}} B_l(w_{i_1}, \dotsc, w_{i_l}) \prod^l_{r = 1} \left[ \prodprime^M_{k = y_{i_r}} \left( \frac{b_k}{b_k - w_{i_r}} \right) e^{-w_{i_r} t} \right],
\end{multline}
where the term corresponding to $l = 0$ and $I_l = \emptyset$ is taken as $q^{-N(N - 1)/2}$.

We remark that relation \eqref{eq:identical_in_nested} is analogous to the expression of $\tilde{\mu}_k$ as a linear combination of $\mu_j$ in \cite[Proposition 3.12]{Borodin-Corwin13}.

\subsubsection{Proof of Theorem \ref{main-theorem-1}}

In this subsection we compute the distribution function $\Prob_Y(x_n(t) \leq M)$ and $\Prob_Y(x_{N - n + 1}(t) > M)$, where $n = 1, 2, \dotsc, N$. Let us write
\begin{equation} \label{eq:general_position_decomp}
  \Prob_Y(x_n(t) \leq M) = \sum^{n - 1}_{l = 0} P^{(l)}_Y(M), \quad \Prob_Y(x_{N - n + 1}(t) > M) = \sum^{N}_{l = N - n + 1} P^{(l)}_Y(M),
\end{equation}
where
\begin{equation}
  P^{(0)}_Y(M) = \Prob_Y(x_1(t) \leq M), \quad P^{(N)}_Y(M) = \Prob_Y(x_N > M),
\end{equation}
and for $l = 1, \dotsc, N - 1$,
\begin{equation}
  P^{(l)}_Y(M) = \mathbb{P}_Y(x_{l + 1}(t) \leq M \text{ and } x_l(t) > M).
\end{equation}
Recall the notations $I_l = \{ i_1, \dotsc, i_l \} \subseteq \{ 1, 2, \dotsc, N \}$ with $i_1 < \dotsb < i_l$, the complement set $I^c_l = \{1, \dotsc, N \} \setminus I_l = \{ j_1, \dotsc, j_{N - l} \}$ with $j_1 < \dotsb < j_{N - l}$, the subset $S_N(I_l)$ defined in \eqref{eq:defn_S_n(I)} of the permutation group $S_N$, the mapping $\varphi_{I_l}(\cdot \mid \cdot)$ defined in \eqref{eq:defn_phi_I(lambda_tau)} that maps $S_l \times S_{N - l}$ bijectively to $S_N(I_l)$. We decompose
\begin{equation} \label{eq:decomp_P^l_Y(M)}
  P^{(N - l)}_Y(M) = \sum_{I_l \subseteq \{ 1, \dotsc, N \}} P^{(N - l)}_Y(M; I_l),
\end{equation}
where, with indices $U = (u_1, \dotsc, u_{N - l}) \in \Weylchamber[N - l]$ and $V = (v_1, \dotsc, v_l) \in \Weylchamber[l]$,
\begin{multline} \label{eq:first_expression_P^(l)_Y(M;l)}
  P^{(N - l)}_Y(M; I_l) = \sum_{-\tilde{M} \leq v_l \leq \dotsb \leq v_1 \leq M < u_{N - l} \leq \dotsb \leq u_1} \frac{1}{W(U)} \prod^{N - l}_{j = 1} \left( \frac{-1}{b_{u_j}} \right) \frac{1}{W(V)} \prod^l_{j = 1} \left( \frac{-1}{b_{v_j}} \right) \\
  \times \dashint_C dw_1 \dotsi \dashint_C dw_N \sum_{\lambda \in S_l, \tau \in S_{N - l}} A_{\varphi_{I_l}(\lambda \mid \tau)}(w_1, \dotsc, w_N) \\
  \times \prod^{N - l}_{r = 1} \left[ \prod^{u_r}_{k = y_{j_{\tau(r)}}} \left( \frac{b_k}{b_k - w_{j_{\tau(r)}}} \right) e^{-w_{j_{\tau(r)}} t} \right] \prod^l_{r = 1} \left[ \prod^{v_r}_{k = y_{i_{\lambda(r)}}} \left( \frac{b_k}{b_k - w_{i_{\lambda(r)}}} \right) e^{-w_{i_{\lambda(r)}} t} \right],
\end{multline}
and $P^{(N - l)}_Y(M; I_l)$ degenerates into \eqref{eq-marginal-1} if $l = N$ and $I_l = \{ 1, \dotsc, N \}$, or degenerates into \eqref{eq-marginal-1} if $l = 0$ and $I_l = \emptyset$. In \eqref{eq:first_expression_P^(l)_Y(M;l)} we assume the contour $C$ and constant $-\tilde{M}$ are specified in Proposition \ref{thm:right_most}. Recall that $A_{\varphi_{I_l}(\lambda \mid \tau)}(w_1, \dotsc, w_N)$ can be decomposed by \eqref{eq:decomp_A_{sigma}}. Now we apply Proposition \ref{thm:left_most} to the right-hand side of \eqref{eq:first_expression_P^(l)_Y(M;l)}, with $X$ replaced by $U$, the $N$ variables $w_1, \dotsc, w_N$ replaced by the $(N - l)$ variables $w_{j_1}, \dotsc, w_{j_{N - l}}$, and the function $F(w_{j_1}, \dotsc, w_{j_{N - l}})$ specialized by
\begin{multline}
  F(w_{j_1}, \dotsc, w_{j_{n - l}}) = \sum_{-\tilde{M} \leq v_l \leq \dotsb \leq v_1 \leq M} \frac{1}{W(V)} \prod^l_{j = 1} \left( \frac{-1}{b_{v_j}} \right) \dashint_C dw_{i_1} \dotsi \dashint_C dw_{i_l} \\
  \times G_{I_l}(w_1, \dotsc, w_N) A_{\lambda}(w_{i_1}, \dotsc, w_{i_l}) \prod^l_{r = 1} \left[ \prod^{v_r}_{k = y_{i_{\lambda(r)}}} \left( \frac{b_k}{b_k - w_{i_{\lambda(r)}}} \right) e^{-w_{i_{\lambda(r)}} t} \right],
\end{multline}
where $G_{I_l}(w_1, \dotsc, w_N)$ is defined in \eqref{eq:defn_G_I_l}. We then rewrite \eqref{eq:first_expression_P^(l)_Y(M;l)} as
\begin{multline} \label{eq:intermediate_expression_P^(l)_Y(M;l)}
  P^{(N - l)}_Y(M; I_l) = \dashint_C \frac{dw_{j_1}}{w_{j_1}} \dotsi \dashint_C \frac{dw_{j_{N - l}}}{w_{j_{N - l}}} F(w_{j_1}, \dotsc, w_{j_{N - l}}) \\
  \times B_{N - l}(w_{j_1}, \dotsc, w_{j_{N - l}}) \prod^{N - l}_{r = 1} \left[ \prod^M_{k = y_{j_r}} \left( \frac{b_k}{b_k - w_{j_r}} \right) e^{-w_{j_r} t} \right].
\end{multline}
Now we note that $F(w_{j_1}, \dotsc, w_{j_{N - l}})$ can be simplified by Proposition \ref{thm:right_most}, with $X$ replaced by $V$, the $N$ variables $w_1, \dotsc, w_N$ replaced by the $l$ variables $w_{i_1}, \dotsc, w_{i_l}$, and the function $F(w_{i_1}, \dotsc, w_{i_l})$ specialized by $G_{I_l}(w_1, \dotsc, w_N)$ in which $w_{i_1}, \dotsc, w_{i_l}$ are variables and $w_{j_1}, \dotsc, w_{j_{N - l}}$ are parameters. Thus \eqref{eq:intermediate_expression_P^(l)_Y(M;l)} is further simplified into
\begin{multline} \label{eq:hetero_integral_P^(N-l)_Y}
  P^{(N - l)}_Y(M; I_l) = (-1)^l q^{l(l - 1)/2} \dashint_C \frac{dw_{j_1}}{w_{j_1}} \dotsi \dashint_C \frac{dw_{j_{N - l}}}{w_{j_{N - l}}} \dashint_{\tilde{C}_{i_1}} \frac{dw_{i_1}}{w_{i_1}} \dotsi \dashint_{\tilde{C}_{i_l}} \frac{dw_{i_l}}{w_{i_l}} \\
  \times G_{I_l}(w_1, \dotsc, w_N) B_l(w_{i_1}, \dotsc, w_{i_l}) B_{N - l}(w_{j_1}, \dotsc, w_{j_{N - l}}) \prod^N_{j = 1} \left[ \prod^M_{k = y_j} \left( \frac{b_k}{b_k - w_j} \right) e^{-w_j t} \right].
\end{multline}
Up to here $P^{(N - l)}_Y(M; I_l)$ is expressed by a single, but heterotypic contour integral. We can use the residue theorem to express it into a sum of multiple integrals over $C$, or a sum of multiple integrals over $\tilde{C}_1, \dotsc, \tilde{C}_N$. The former approach leads to the proof of \eqref{eq:distr_X_m}, and the latter approach leads to the proof of \eqref{eq:distr_X_N-m+1}.

\paragraph{The $n$-th rightmost particle}

We first prove \eqref{eq:distr_X_N-m+1}. For any pair of disjoint subsets $I' = \{ i'_1, \dotsc, i'_l \}$ and $J' = \{ j'_1, \dotsc, j'_m \}$ of $\{ 1, \dotsc, N \}$ with $i'_1 < \dotsb < i'_l$ and $j'_1 < \dotsb < j'_m$, we denote
\begin{equation}
  C_{I', J'} = \prod_{i < i' \in I'} \frac{w_i - w_{i'}}{qw_i - w_{i'}} \prod_{j < j' \in J'} \frac{w_j - w_{j'}}{qw_j - w_{j'}} \prod_{i \in I', j \in J', \text{ and } i < j} -\frac{qw_j - w_i}{qw_i - w_j},
\end{equation}
so that if we take $I' = I_l = \{ i_1, \dotsc, i_l \}$ and $J = I^c_l = \{ j_1, \dotsc, j_{N - l} \}$ as above \eqref{eq:defn_phi_I(lambda_tau)}, then
\begin{equation*}
C_{I_l, I^c_l} = G_{I_l}(w_1, \dotsc, w_N) B_l(w_{i_1}, \dotsc, w_{i_l}) B_{N - l}(w_{j_1}, \dotsc, w_{j_{N - l}}).
\end{equation*} Using the residue theorem to \eqref{eq:hetero_integral_P^(N-l)_Y} and deform the contours for $w_{j_r}$ ($r = 1, \dotsc, N - l$) from $C$ to $\tilde{C}_{j_r}$ one by one, we have ($\lvert I' \rvert$ means the number of elements in set $I'$)
\begin{multline} \label{eq:identical_in_nested_general_l}
  P^{(N - l)}_Y(M; I_l) = (-1)^l q^{l(l - 1)/2} \sum^N_{r = l} \sum_{\substack{I' = \{ i'_1, \dotsc, i'_r \} \subseteq \{ 1, \dotsc, N \} \\ \lvert I' \rvert = r \text{ and } I' \supseteq I_l}} q^{(i'_1 + \dotsb + i'_r) - (2N - r + 1)r/2} \\
  \times \dashint_{\tilde{C}_{i'_1}} \frac{dw_{i'_1}}{w_{i'_1}} \dotsi \dashint_{\tilde{C}_{i'_r}} \frac{dw_{i'_r}}{w_{i'_r}} C_{I_l, I' \setminus I_l} \prod^r_{j = 1} \left[ \prodprime^M_{k = y_{i'_j}} \left( \frac{b_k}{b_k - w_{i'_j}} \right) e^{-w_{i'_j} t} \right].
\end{multline}
First we use \eqref{eq:identical_in_nested_general_l} and \eqref{eq:decomp_P^l_Y(M)} to derive
\begin{multline} \label{eq:sum_identical_in_nested_general_l}
  P^{(N - l)}_Y(M) = (-1)^l q^{l(l - 1)/2} \sum^N_{r = l} \sum_{\substack{I' = \{ i'_1, \dotsc, i'_r \} \\ I' \in \{ 1, \dotsc, N \} \text{ and } \lvert I' \rvert = r}} q^{(i'_1 + \dotsb + i'_r) - (2N - r + 1)r/2} \\
  \times \dashint_{\tilde{C}_{i'_1}} \frac{dw_{i'_1}}{w_{i'_1}} \dotsi \dashint_{\tilde{C}_{i'_r}} \frac{dw_{i'_r}}{w_{i'_r}} \left( \sum_{I'' \subseteq I' \text{ and } \lvert I'' \rvert = l} C_{I'', I' \setminus I''} \right) \prod^r_{j = 1} \left[ \prodprime^M_{k = y_{i'_j}} \left( \frac{b_k}{b_k - w_{i'_j}} \right) e^{-w_{i'_j} t} \right].
\end{multline}

Below we compute the sum $\sum_{I'' \subseteq I' \text{ and } \lvert I'' \rvert = l} C_{I'', I' \setminus I''}$. For notational simplicity, we denote $u_j = w_{i'_j}$ for $j = 1, \dotsc, r$. Then we have that for any $I'' \subseteq I'$, analogous to \eqref{eq:to_be_in_matrix_forms},
\begin{equation} \label{eq:C_{III}_by_det(h)}
  (-1)^{r(r - 1)/2} C_{I'', I' \setminus I''} = \frac{q^{-l(l - 1)/2}}{\prod_{1 \leq i < j \leq r} (qu_i - u_j)} \det \left( h^{I''}_{j, k}(u_k) \right)^r_{j, k = 1},
\end{equation}
where
\begin{equation} \label{eq:defn_h^I''_jk}
  h^{I''}_{j, k}(x) =
  \begin{cases}
    (qx)^{j - 1} & \text{if $i'_j \in I''$}, \\
    x^{j - 1} & \text{otherwise}
  \end{cases}
\end{equation}
which is analogous to \eqref{eq:det_of_f^I}. Then we have
\begin{equation} \label{eq:t^l_coeff_useful}
  \begin{split}
    \sum^r_{l = 0} \left[ \sum_{I'' \subseteq I' \text{ and } \lvert I'' \rvert = l} \det \left( h^{I''}_{j, k}(u_k) \right)^r_{j, k = 1} \right] t^l = {}& \det \left( (1 + q^{j - 1} t) u^{j - 1}_k \right)^r_{j, k = 1} \\
    = {}& \det \left( u^{j - 1}_k \right)^r_{j, k = 1} \prod^r_{j = 1} (1 + q^{j - 1} t) \\
    = {}& \prod_{1 \leq i < j \leq r} (u_i - u_j) \sum^r_{l = 0} q^{l(l - 1)/2} \qbinom{r}{l} t^l,
  \end{split}
\end{equation}
where the last identity is by \cite[Corollary 10.2.2(c)]{Andrews-Askey-Roy99}. Hence by using \eqref{eq:C_{III}_by_det(h)} and taking the $t^l$ coefficient in \eqref{eq:t^l_coeff_useful}, we have
\begin{equation} \label{eq:sum_C_IJ_nested}
  \begin{split}
    \sum_{I'' \subseteq I' \text{ and } \lvert I'' \rvert = l} C_{I'', I' \setminus I''} = {}& \frac{q^{-l(l - 1)/2}}{\prod_{1 \leq i < j \leq r} (qu_i - u_j)} \sum_{I'' \subseteq I' \text{ and } \lvert I'' \rvert = l} (-1)^{r(r - 1)/2} \det \left( h^{I''}_{j, k}(u_k) \right)^r_{j, k = 1} \\
    = {}& \frac{q^{-l(l - 1)/2}}{\prod_{1 \leq i < j \leq r} (qu_i - u_j)} q^{l(l - 1)/2} \qbinom{r}{l} \prod_{1 \leq i < j \leq r} (u_i - u_j) \\
    = {}& \qbinom{r}{l} B_r(u_1, \dotsc, u_r).
  \end{split}
\end{equation}

Thus we have by \eqref{eq:decomp_P^l_Y(M)}, \eqref{eq:sum_identical_in_nested_general_l} and \eqref{eq:sum_C_IJ_nested} that
\begin{multline}
  P^{(N - l)}_Y(M) = (-1)^l q^{l(l - 1)/2} \sum^N_{r = l} \sum_{\substack{I' = \{ i'_1, \dotsc, i'_r \} \\ I' \subseteq \{ 1, \dotsc, N \} \text{ and } \lvert I' \rvert = r}} q^{(i'_1 + \dotsb + i'_r) - (2N - r + 1)r/2} \qbinom{r}{l} \\
  \times \dashint_{\tilde{C}_{i'_1}} \frac{dw_{i'_1}}{w_{i'_1}} \dotsi \dashint_{\tilde{C}_{i'_r}} \frac{dw_{i'_r}}{w_{i'_r}} B_r(w_{i'_1}, \dotsc, w_{i'_r}) \prod^r_{j = 1} \left[ \prodprime^M_{k = y_{i'_j}} \left( \frac{b_k}{b_k - w_{i'_j}} \right) e^{-w_{i'_j} t} \right]
\end{multline}
for $l = 0, 1, \dotsc, N$. Hence by \eqref{eq:general_position_decomp}, we have
\begin{multline} \label{eq:almost_final_dist_form_x_m}
  P_Y(x_n(t) \leq M) = \sum^N_{r = N - n + 1} \sum_{\substack{I' = \{ i'_1, \dotsc, i'_r \} \\ I \subseteq \{ 1, \dotsc, N \} \text{ and } \lvert I' \rvert = r}} q^{(i'_1 + \dotsb + i'_r) - (2N - r + 1)r/2} \sum^r_{l = N - n + 1} (-1)^l q^{l(l - 1)/2} \qbinom{r}{l} \\
  \times \dashint_{\tilde{C}_{i'_1}} \frac{dw_{i'_1}}{w_{i'_1}} \dotsi \dashint_{\tilde{C}_{i'_r}} \frac{dw_{i'_r}}{w_{i'_r}} B_r(w_{i'_1}, \dotsc, w_{i'_r}) \prod^r_{j = 1} \left[ \prodprime^M_{k = y_{i_j}} \left( \frac{b_k}{b_k - w_{i'_j}} \right) e^{-w_{i'_j} t} \right].
\end{multline}
Furthermore, using \cite[Corollary 10.2.2(c)]{Andrews-Askey-Roy99} and then \cite[Formula (10.0.3)]{Andrews-Askey-Roy99}, we have
\begin{equation} \label{eq:first_q_binom_sum_form}
  \begin{split}
    \sum^r_{l = N - n + 1} (-1)^l q^{l(l - 1)/2} \qbinom{r}{l} = {}& -\sum^{N - n}_{l = 0} (-1)^l q^{l(l - 1)/2} \qbinom{r}{l} \\
    = {}& (-1)^{N - n + 1} q^{(N - n)(N - n + 1)/2} \qbinom{r - 1}{N - n}.
  \end{split}
\end{equation}
Hence we have
\begin{multline} \label{eq:distr_x_m(t)_general}
  \Prob_Y(x_n(t) \leq M) = (-1)^{N - n + 1} q^{(N - n)(N - n + 1)/2} \sum^N_{r = N - n + 1} \qbinom{r - 1}{N - n} \\
  \times \sum_{\substack{I' = \{ i'_1, \dotsc, i'_r \} \\ I \subseteq \{ 1, \dotsc, N \} \text{ and } \lvert I' \rvert = r}} q^{(i'_1 + \dotsb + i'_r) - (2N - r + 1)r/2} \\
  \times \dashint_{\tilde{C}_{i'_1}} \frac{dw_{i'_1}}{w_{i'_1}} \dotsi \dashint_{\tilde{C}_{i'_r}} \frac{dw_{i'_r}}{w_{i'_r}} B_r(w_{i'_1}, \dotsc, w_{i'_r}) \prod^r_{j = 1} \left[ \prodprime^M_{k = y_{i_j}} \left( \frac{b_k}{b_k - w_{i'_j}} \right) e^{-w_{i'_j} t} \right],
\end{multline}
and prove \eqref{eq:distr_X_N-m+1}.

\paragraph{The $n$-th leftmost particle}

Next we prove \eqref{eq:distr_X_m}. Similar to \eqref{eq:identical_in_nested_general_l}, using the residue theorem to \eqref{eq:hetero_integral_P^(N-l)_Y} and deform the contours for $w_{i_r}$ ($r = 1, \dotsc, l$) from $\tilde{C}_{i_r}$ to $C$ one by one, we have
\begin{multline} \label{eq:nested_in_identical_general_l}
  P^{(N - l)}_Y(M; I_l) = (-1)^l q^{l(l - 1)/2} \sum^N_{r = N - l} \sum_{\substack{J' = \{ j'_1, \dotsc, j'_r \} \subseteq \{ 1, \dotsc, N \} \\ \lvert J' \rvert = r \text{ and } J' \supseteq I^c_l}} (-1)^{N - r} q^{(j'_1 + \dotsb + j'_r) - r - N(N - 1)/2 + (N - l)(N - r)} \\
  \times \dashint_C \frac{dw_{j'_1}}{w_{j'_1}} \dotsi \dashint_C \frac{dw_{j'_r}}{w_{j'_r}} C_{J' \setminus I^c_l, I^c_l} \prod^r_{i = 1} \left[ \prodprime^M_{k = y_{j'_i}} \left( \frac{b_k}{b_k - w_{j'_i}} \right) e^{-w_{j'_i} t} \right].
\end{multline}
Using \eqref{eq:nested_in_identical_general_l} we can derive, analogous to \eqref{eq:sum_identical_in_nested_general_l},
\begin{multline} \label{eq:sum_identical_in_identical_general_l}
  P^{(N - l)}_Y(M) = (-1)^l q^{l(l - 1)/2} \sum^N_{r = N - l} \sum_{\substack{J' = \{ j'_1, \dotsc, j'_r \} \\ J' \subseteq \{ 1, \dotsc, N \} \text{ and } \lvert J' \rvert = r}} (-1)^{N - r} q^{(j'_1 + \dotsb + j'_r) - r - N(N - 1)/2 + (N - l)(N - r)} \\
  \times \dashint_{C_{j'_1}} \frac{dw_{j'_1}}{w_{j'_1}} \dotsi \dashint_{C_{j'_r}} \frac{dw_{j'_r}}{w_{j'_r}} \left( \sum_{J'' \subseteq J' \text{ and } \lvert J'' \rvert = N - l} C_{J' \setminus J'', J''} \right) \prod^r_{i = 1} \left[ \prodprime^M_{k = y_{j'_i}} \left( \frac{b_k}{b_k - w_{j'_i}} \right) e^{-w_{j'_i} t} \right].
\end{multline}
Similar to \eqref{eq:C_{III}_by_det(h)}, \eqref{eq:defn_h^I''_jk} and \eqref{eq:sum_C_IJ_nested}, we have that if we denote $v_i = w_{j'_i}$ for $i = 1, \dotsc, r$, then
\begin{equation} \label{eq:sum_C_IJ_identical}
  \begin{split}
    & \sum_{J'' \subseteq J' \text{ and } \lvert J'' \rvert = N - l} C_{J' \setminus J'', J''} \\
    = {}& \frac{q^{-(r - N + l)(r - N + r - 1)/2}}{\prod_{1 \leq i < j \leq r} (qv_i - v_j)} (-1)^{r(r - 1)/2} \det \left( h^{J' \setminus J''}_{j, k}(v_k) \right)^r_{j, k = 1} \\
    = {}& \frac{q^{-(r - N + l)(r - N + r - 1)/2}}{\prod_{1 \leq i < j \leq r} (qv_i - v_j)} q^{(r - N + l)(r - N + l - 1)/2} \qbinom{r}{r - N + l} \prod_{1 \leq i < j \leq r} (v_i - v_j) \\
    = {}& \qbinom{r}{N - l} B_r(v_1, \dotsc, v_r),
  \end{split}
\end{equation}
where
\begin{equation}
  h^{J' \setminus J''}_{j, k}(x) =
  \begin{cases}
    (qx)^{j - 1} & \text{if $j'_j \in J' \setminus J''$}, \\
    x^{j - 1} & \text{otherwise}.
  \end{cases}
\end{equation}

Thus we have by \eqref{eq:decomp_P^l_Y(M)}, \eqref{eq:sum_identical_in_identical_general_l} and \eqref{eq:sum_C_IJ_identical} that
\begin{multline}
  P^{(N - l)}_Y(M) = (-1)^l q^{l(l - 1)/2} \sum^N_{r = N - l} \sum_{\substack{J' = \{ j'_1, \dotsc, j'_r \} \\ J' \subseteq \{ 1, \dotsc, N \} \text{ and } \lvert J' \rvert = r}} (-1)^{N - r} q^{(j'_1 + \dotsb + j'_r) - r - N(N - 1)/2 + (N - l)(N - r)} \\
  \times \qbinom{r}{N - l} \dashint_C \frac{dw_{j'_1}}{w_{j'_1}} \dotsi \dashint_C \frac{dw_{j'_r}}{w_{j'_r}} B_r(w_{j'_1}, \dotsc, w_{j'_r}) \prod^r_{i = 1} \left[ \prodprime^M_{k = y_{j'_i}} \left( \frac{b_k}{b_k - w_{j'_i}} \right) e^{-w_{j'_i} t} \right]
\end{multline}
for $l = 0, 1, \dotsc, N$. Hence similar to \eqref{eq:almost_final_dist_form_x_m}, we have by \eqref{eq:general_position_decomp} that
\begin{multline} \label{eq:identical_contour_almost_summation}
  P_Y(x_{N - n + 1}(t) > M) = \sum^N_{r = N - n + 1} \sum_{\substack{J' = \{ j'_1, \dotsc, j'_r \} \\ J' \subseteq \{ 1, \dotsc, N \} \text{ and } \lvert J' \rvert = r}} (-1)^{N - r} q^{(j'_1 + \dotsb + j'_r) - r - N(N - 1)/2} \\
  \times \sum^r_{l = N - n + 1} (-1)^{N - l} q^{(N - l)(N - l - 1)/2 + l(N - r)} \qbinom{r}{l} \\
  \times \dashint_C \frac{dw_{j'_1}}{w_{j'_1}} \dotsi \dashint_C \frac{dw_{j'_r}}{w_{j'_r}} B_r(w_{j'_1}, \dotsc, w_{j'_r}) \prod^r_{i = 1} \left[ \prodprime^M_{k = y_{j_i}} \left( \frac{b_k}{b_k - w_{j'_i}} \right) e^{-w_{j'_i} t} \right].
\end{multline}
Using \eqref{eq:first_q_binom_sum_form} and the relation $\qbinom{r}{l} = \qbinom{r}{r - l}$, we have
\begin{equation}
  \begin{split}
    & \sum^r_{l = N - n + 1} (-1)^{N - l} q^{(N - l)(N - l - 1)/2 + l(N - r)} \qbinom{r}{l} \\
    = {}& (-1)^{N - r} q^{(N + r - 1)(N - r)/2} \sum^{r - (N - n + 1)}_{l = 0} (-1)^l q^{l(l - 1)/2} \qbinom{r}{l} \\
    = {}& (-1)^{N - r} q^{(N + r - 1)(N - r)/2} (-1)^{r - (N - n + 1)} q^{[r - (N - n + 1)][r - (N - n)]/2} \qbinom{r - 1}{r - (N - n + 1)} \\
    = {}& (-1)^{n - 1} q^{(N - r)(N - n) + n(n - 1)/2} \qbinom{r - 1}{N - n}.
  \end{split}
\end{equation}
Plugging this into \eqref{eq:identical_contour_almost_summation}, we have
\begin{multline}
  P_Y(x_{N - n + 1}(t) > M) = (-1)^{N - n + 1} q^{(N - n + 1)(N - n)/2} \sum^N_{r = N - n + 1} (-1)^r \qbinom{r - 1}{N - n} \\
  \times \sum_{\substack{J' = \{ j'_1, \dotsc, j'_r \} \\ J \subseteq \{ 1, \dotsc, N \} \text{ and } \lvert J' \rvert = r}} q^{(j'_1 + \dotsb + j'_r) - (N - n + 1)r} \\
  \times \dashint_C \frac{dw_{j'_1}}{w_{j'_1}} \dotsi \dashint_C \frac{dw_{j'_r}}{w_{j'_r}} B_r(w_{j'_1}, \dotsc, w_{j'_r}) \prod^r_{i = 1} \left[ \prodprime^M_{k = y_{j_i}} \left( \frac{b_k}{b_k - w_{j'_i}} \right) e^{-w_{j'_i} t} \right],
\end{multline}
and prove \eqref{eq:distr_X_m}.

\subsection{Infinite system with step initial condition}

In this subsection we prove Theorem \ref{theorem-step}. First consider the $q$-TAZRP model with $N$ particles starting at $Y = 0^N = (0, 0, \dotsc, 0)$. By \eqref{eq:distr_X_m}, for a nonnegative integer $M$,
\begin{equation} \label{eq:finite_n^N}
  \mathbb{P}_{0^N}(x_m(t) > M) = (-1)^m q^{m(m - 1)/2} \sum^N_{r = m} S_N(M, t; r, m),
\end{equation}
where
\begin{equation}
  \begin{split}
  S_N(M, t; r, m) = {}& (-1)^r s_N(r, m) \dashint_{\Gamma} \frac{d z_1}{z_1} \dotsi \dashint_{\Gamma} \frac{d z_r}{z_r} B_r(z_1, \dotsc, z_r) \prod^r_{j = 1} \left[ \prod^M_{k = 0} \left( \frac{b_k}{b_k - z_j} \right) e^{-z_j t} \right] \\
  = {}& (-1)^r s_N(r, m) \Prob_{0^r}(x_r(t) > M),
  \end{split}
\end{equation}
such that $\Gamma$ is the contour in \eqref{eq:op_K_Mt}, and
\begin{equation}
  s_N(r, m) = \left( \sum_{J' = \{ j'_1, \dotsc, j'_r \} \subseteq \{ 1, \dotsc, N \} \text{ and } \lvert J' \rvert = r} q^{(j'_1 + \dotsb + j'_r) - mr} \right) \qbinom{r - 1}{m - 1}.
\end{equation}

To prove Theorem \ref{theorem-step}, we take the $N \to \infty$ limit of \eqref{eq:finite_n^N}. Since $\Prob_{0^r}(x_r(t) > M)$ is a number between $0$ and $1$ that is independent of $N$, we only need to take the limit of $s_N(r, m)$ as $N \to \infty$. Since $s_N(r, m) > 0$ for all $N, r, m$, the convergence of the series as $N \to \infty$ is absolute, so we can freely change the order of summation.

We consider first the $m = 1$ case of Theorem \ref{theorem-step}, and have in that case
\begin{equation}
  s_N(r, 1) = \sum_{J' = \{ j'_1, \dotsc, j'_r \} \subseteq \{ 1, \dotsc, N \} \text{ and } \lvert J' \rvert = r} q^{(j'_1 + \dotsb + j'_r) - r}.
\end{equation}
As $N \to \infty$, we have that for all $r$
\begin{equation} \label{eq:limit_s_n(r,1)}
  \lim_{N \to \infty} s_N(r, 1) = \sum_{1\leq j'_1 < j'_2 < \dotsb < j'_r < \infty} q^{(j'_1 + \dotsb + j'_r) - r} = \frac{q^{r(r + 1)/2 - r}}{(1 - q)(1 - q^2) \dotsm (1 - q^r)} = \frac{q^{r(r - 1)/2}}{(1 - q)^r [r]_q!}.
\end{equation}
Thus we have that for any fixed $M$ and $r$, as $N \to \infty$, we have
\begin{equation} \label{eq:limiting_distr_x_1(t)}
  \Prob_{0^{\infty}}(x_1(t) > M) = \lim_{N \to \infty} \Prob_{0^N}(x_1(t) > M) = -\sum^{\infty}_{r = 1} (-1)^r \frac{q^{r(r - 1)/2}}{(1 - q)^r [r]_q!} \Prob_{0^r}(x_r(t) > M).
\end{equation}
Next, we use \cite[Proposition 3.9]{Borodin-Corwin13} and express
\begin{multline}
  (-1)^r q^{r(r - 1)/2} \Prob_{0^r}(x_r(t) > M) \\
  = \frac{[r]_q!}{r!} (1 - q^{-1})^r \dashint_{\Gamma} dz_1 \dotsi \dashint_{\Gamma} dz_r \det \left( \frac{1}{z_j/q - z_k} \right)^r_{j, k = 1} \prod^r_{j = 1} \left[ \prod^M_{k = 0} \left( \frac{b_k}{b_k - z_j} \right) e^{-z_j t} \right],
\end{multline}
and then use \cite[Proposition 3.16]{Borodin-Corwin13} to further express
\begin{equation}
  \begin{split}
    & \sum^{\infty}_{r = 0} (-1)^r \frac{q^{r(r - 1)/2}}{(1 - q)^r [r]_q!} \Prob_{0^r}(x_r(t) > M) \\
    = {}& \sum^{\infty}_{r = 0} \frac{1}{r!} \dashint_{\Gamma} dz_1 \dotsi \dashint_{\Gamma} dz_r \det \left( \frac{-q^{-1}}{z_j/q - z_k} \prod^M_{k = 0} \left( \frac{b_k}{b_k - z_j} \right) e^{-z_j t} \right)^r_{j, k = 1} \\
    = {}& \det(I + K_{M, t}),
  \end{split}
\end{equation}
where $K_{M, t}$ is the integral operator, from $L^2(C)$ to $L^2(C)$, defined in \eqref{eq:op_K_Mt}. Hence by \eqref{eq:limiting_distr_x_1(t)} we conclude that
\begin{equation}
  \Prob_{0^\infty}(x_1(t) \leq M) = 1 - \lim_{N \to \infty} \Prob_{0^N}(x_1(t) > M) = \det(I + K_{M, t}),
\end{equation}
which is the $m = 1$ case of \eqref{theorem-1}.

By the same method, we can prove Theorem \ref{theorem-step} for general $m$. Similar to \eqref{eq:limit_s_n(r,1)}, we have
\begin{equation}
  \begin{split}
    \lim_{N \to \infty} s_N(r, m) = {}& \left( \sum_{J' = \{ j'_1, \dotsc, j'_r \} \subseteq \{ 1, \dotsc, N \} \text{ and } \lvert J' \rvert = r} q^{(j'_1 + \dotsb + j'_r) - mr} \right) \qbinom{r - 1}{m - 1} \\
    = {}& \frac{q^{r(r - 1)/2}}{(1 - q)^r [r]_q!} q^{-(m - 1)r} \qbinom{r - 1}{m - 1}.
  \end{split}
\end{equation}
By the $q$-binomial theorem (See \cite[Corollary 10.2.2(d)]{Andrews-Askey-Roy99})
\begin{equation}
  \sum^{\infty}_{r = m} q^{-(m - 1)r} \qbinom{r - 1}{m - 1} \xi^r =  q^{-m(m - 1)} \xi^m \frac{1}{(1 - \xi)(1 - q^{-1}\xi) \dotsm (1 - q^{-(m - 1)} \xi)},
\end{equation}
where the convergence is for $\lvert \xi \rvert < q^{m - 1}$, so we have that
\begin{equation} \label{eq:step_init_contour_proof}
  \begin{split}
    & \Prob_{0^{\infty}}(x_m(t) > M) \\
    = {}& \lim_{N \to \infty} \Prob_{0^N}(x_m(t) > M) \\
    = {}& (-1)^m q^{m(m - 1)/2} \sum^{\infty}_{r = m} (-1)^r \frac{q^{r(r - 1)/2}}{(1 - q)^r [r]_q!} q^{-(m - 1)r} \qbinom{r - 1}{m - 1} \Prob_{0^r}(x_r(t) > M) \\
    = {}& \frac{(-1)^m q^{-m(m - 1)/2}}{2\pi i} \oint_0 \frac{d\xi}{\xi} \left[ \sum^{\infty}_{r = 0} \frac{\xi^{-r}}{r!} \dashint_{\Gamma} dz_1 \dotsi \dashint_{\Gamma} dz_r \det \left( \frac{-q^{-1}}{z_j/q - z_k} \prod^M_{k = 1} \left( \frac{b_k}{b_k - z_j} \right) e^{-z_j t} \right)^r_{j, k = 1} \right] \\
    & \phantom{\frac{(-1)^m q^{-m(m - 1)/2}}{2\pi i} \oint_0} \times \frac{\xi^m}{(1 - \xi)(1 - q^{-1}\xi) \dotsm (1 - q^{-(m - 1)}\xi)} \\
    = {}& \frac{(-1)^m q^{-m(m - 1)/2}}{2\pi i} \oint_0 \frac{d\xi}{\xi} \det(I + \xi^{-1} K_{M, t}) \frac{\xi^m}{(1 - \xi)(1 - q^{-1}\xi) \dotsm (1 - q^{-(m - 1)}\xi)},
  \end{split}
\end{equation}
where the $\xi$ contour lies in the region $\lvert \xi \rvert < q^{m - 1}$. Finally we take the change of variables $\zeta = \xi^{-1}$, and have \eqref{theorem-1} for general $m$, where the $\zeta$ contour lies out of the region $\lvert \zeta \rvert > q^{1 - m}$. We complete the proof by noting that the $\xi$ contour in \eqref{eq:step_init_contour_proof} can be taken that the corresponding $\zeta$ contour is the contour $C$ given in Theorem \ref{theorem-step}.

\section{Asymptotics} \label{sec:asymptotic}

The main part of this section is the proof of Theorem \ref{thm:asymp}, and in the end of this section we prove Corollary \ref{cor:TW_formula}.

To prove Theorem \ref{thm:asymp}, we assume that $M$, $t$ and $b_k$ are specified as in \eqref{eq:parameter_specification}, depending on constants $\tau$, $l$, $\beta_0, \dotsc, \beta_l$, and $n \to \infty$. Then we follow the approach in \cite{Tracy-Widom08} and \cite{Tracy-Widom09}, and prove the following two limiting properties of the integral operator $K_{M, t}$ in Theorem \ref{theorem-step}:
\begin{enumerate}[label=(\roman*)]
\item \label{enu:step_1}
  \begin{equation} \label{eq:step_1_conv}
    \tr \left( \left( \left. K_{M, t} \right\rvert_{M = n + l + 1, t = n - \tau \sqrt{n}} \right)^k \right) \to \tr \K^k
  \end{equation}
  for all $k = 1, 2, \dotsc$, where $\K = \K_{\tau; \beta_0, \dotsc, \beta_l}$ whose kernel is defined in \eqref{eq:defn_K(z, w)}.
\item \label{enu:step_2}
  $\left. \det(I + \zeta K_{M, t}) \right\rvert_{M= n + l + 1, \text{ and } t = n - \tau \sqrt{n}}$ is uniformly bounded for large $n$ on compact $\zeta$ sets.
\end{enumerate}
If we have the two properties above, then it is straightforward to check that the convergence in \eqref{eq:uniform_conv_K_to_K} holds uniformly in $\lambda$, and then the contour integral on the right-hand side of \eqref{theorem-1} converges to the contour integral on the right-hand side of \eqref{eq:convergence_of_contour_integral_in_det_K}, if the contour $C$ is specified as in Theorem \ref{theorem-step}. Hence Theorem \ref{thm:asymp} is proved. Below we prove the two properties.

\begin{proof}[Proof of property \ref{enu:step_1}]
  We note that $\K$ is a trace class operator, so its Fredholm determinant is well defined. We denote
  \begin{equation}
    f(w) = w + \log(1 - w) \quad \text{and} \quad g(w) = \prod^l_{k = 0} \frac{\beta_k}{\beta_k - w n^{1/2}}.
  \end{equation}
  Note that $\log(1 - w)$ is not well defined on the whole $\Gamma$, and we take the branch cut of $\log(1 - w)$ at $[1, +\infty)$ and assume its imaginary part is between $-\pi$ and $\pi$. Nevertheless, the exponential of $f(w)$ is holomorphic on $\Gamma$, and
  \begin{equation} \label{eq:K_Mt_in_varphi}
    K_{M, t}(w, w') = \frac{\exp \left[ -n \left( f(w) - \frac{\tau}{\sqrt{n}} w \right) \right]}{qw' - w} g(w).
  \end{equation}
  It is not hard to see that $w = 0$ is the unique critical point for $f(w)$. Inspired by this result and owing to the flexibility of the shape of the contour $\Gamma$, we deform $\Gamma$ into the rectangle with vertices $-n^{-1/2} + i$, $2 + i$, $2 - i$ and $-n^{-1/2} -i$, and the four edges denoted by $\Gamma_1, \Gamma_2, \Gamma_3, \Gamma_4$, as shown in Figure \ref{fig:rect_Gamma}. Here we note that the orientation of $\Gamma_1$ is downward.
  \begin{figure}[htb]
    \centering
    \includegraphics{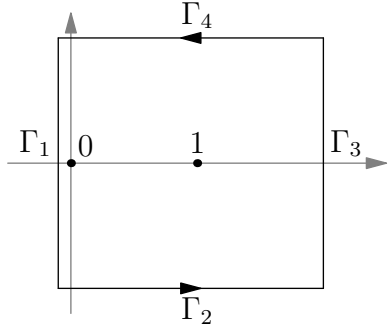}
    \caption{The rectangular shape of $\Gamma$.}
    \label{fig:rect_Gamma}
  \end{figure}
  It is easy to check that $\Re f(w)$ attains its minimum at $w = -n^{-1/2}$ over $\Gamma$. To see it, we only need to consider $\eta \in \Gamma \cap \compC_+$ since $f(\bar{w}) = \overline{f(w)}$. Using that
  \begin{enumerate*}[label=(\roman*)]
  \item
    as $w = -n^{-1/2} + iy \in \Gamma_1 \cap \compC_+$, $\Re f(w) = -n^{-1/2} + \log \lvert 1 - w \rvert$ is a decreasing function of $y$,
  \item
    as $w = x + i \in \Gamma_4$, $\Re f(w)$ is an increasing function of $x$ for $\frac{d}{dx} \Re f(w) = 1 + \Re((1 - w)^{-1}) > 0$, and
  \item
    as $w = 2 + iy \in \Gamma_3 \cap \compC_+$, $\Re f(w)$ is an decreasing function of $y$ with minimum at $\Re f(2) > \Re f(-n^{-1/2})$,
  \end{enumerate*}
  we verify that $-n^{-1/2}$ is the minimum of $\Re f(w)$ on $\Gamma$.

  Next, we denote
  \begin{equation}
    \Gamma_{\loc} = \{ w \in \Gamma_1 \mid \lvert \Im w \rvert < n^{-2/5} \}, \quad \Gamma_{\res} = \Gamma \setminus \Gamma_{\loc}.
  \end{equation}
  By the discussion on $\Re f(w)$ above, it is straightforward to see that
  \begin{equation} \label{eq:Re_f_est_residual}
    \Re \left( f(w) - \frac{\tau}{\sqrt{n}} w \right) \geq \Re f(-n^{-1/2} + i n^{-2/5}) > -n^{-1/2} + c n^{-4/5}, \quad \text{for all $w \in \Gamma_{\res}$},
  \end{equation}
  where $c$ is a small enough positive number, say, $c = 1/10$. Thus the kernel $K_{M, t}(w, w')$, although defined on the whole $\Gamma$, concentrates locally at the middle of $\Gamma_1$, around $0$.

  Let $w = n^{-1/2} u$ and $w' = n^{-1/2} v$. Then for $w, w' = \bigO(n^{-2/5})$, $u, v = \bigO(n^{1/10})$. We have that for all $w \in \compC$
  \begin{equation}
    g(w) = \gamma(u),
  \end{equation}
  where $\gamma$ is defined in \eqref{eq:defn_K(z, w)}. On the other hand, since
  \begin{equation}
    f(0) = f'(0) = 0, \quad \text{and} \quad f''(0) = -1,
  \end{equation}
  we have that for $w \in \Gamma_{\loc}$,
  \begin{equation} \label{eq:nf_asy}
    -n \left( f(w) - \frac{\tau}{\sqrt{n}} w \right) = \frac{u^2}{2} + \tau u^2 + n \bigO(w^3) =  \frac{u^2}{2} + \tau u^2 + \bigO(n^{-1/5}).
  \end{equation}

  Now we are ready to prove \eqref{eq:step_1_conv}. For notational simplicity, we give the detail for the $k = 2$ case, and then explain how to extend the argument to general $k$.

  We have by \eqref{eq:K_Mt_in_varphi} and \cite[Theorem 3.9]{Simon05}
  \begin{equation} \label{eq:two_fold_int_tr_K^2}
    \tr \left( (K_{M, t})^2 \right) = \int_{\Gamma} dw_1 \int_{\Gamma} dw_2 K_{M, t}(w_1, w_2) K_{M, t}(w_2, w_1) = I_{\loc, \loc} + I_{\loc, \res} + I_{\res, \loc} + I_{\res, \res},
  \end{equation}
  where
  \begin{multline}
    I_{*, \star} = \int_{\Gamma_*} dw_1 \int_{\Gamma_{\star}} dw_2 \frac{\exp \left[ -n \left( f(w_1) + f(w_2) - \frac{\tau}{\sqrt{n}} (w_1 + w_2) \right) \right]}{(w_1 - qw_2)(w_2 - qw_1)} g(w_1) g(w_2), \\
    *, \star = \loc \text{ or } \res.
  \end{multline}
  By \eqref{eq:nf_asy}, we have
  \begin{equation} \label{eq:approx_I_loc_loc}
    \begin{split}
      I_{\loc, \loc} = {}& \int^{-1 - n^{1/10} i}_{-1 + n^{1/10} i} du \int^{-1 - n^{1/10} i}_{-1 + n^{1/10} i} dv \frac{e^{(u^2 + v^2)/2 + \tau(u + v)}}{(u - qv)(v - qu)} \gamma(u) \gamma(v) (1 + \bigO(n^{-1/5})) \\
      = {}& \int_{\Gamma_{\infty}} du \int_{\Gamma_{\infty}} dv \frac{e^{(u^2 + v^2)/2 + \tau(u + v)}}{(u - qv)(v - qu)} \gamma(u) \gamma(v) + \bigO(n^{-1/5}),
    \end{split}
  \end{equation}
  where $\Gamma_{\infty} = \{ -1 - iy \mid y \in \realR \}$ is defined in Theorem \ref{thm:asymp}. By \eqref{eq:Re_f_est_residual}, we have the estimate
  \begin{equation} \label{eq:est_I_res_res}
    I_{\res, \res} = o(e^{-2c n^{1/5}}),
  \end{equation}
  and using \eqref{eq:Re_f_est_residual} and \eqref{eq:nf_asy} together, we derive that
  \begin{equation} \label{eq:est_I_res_loc}
    I_{\loc, \res} = o(e^{-c n^{1/5}}), \quad I_{\res, \loc} = o(e^{-c n^{-1/5}}).
  \end{equation}
  On the other hand, by the definition of $\K$ in \eqref{eq:defn_K(z, w)},
  \begin{equation} \label{eq:tr_K^2}
    \tr \K^2 = \int_{\Gamma_{\infty}} dz \int_{\Gamma_{\infty}} dw \frac{e^{(z^2 + w^2)/2 + \tau(z + w)}}{(z - qw)(w - qz)} \gamma(z) \gamma(w).
  \end{equation}
  Comparing \eqref{eq:tr_K^2} with \eqref{eq:approx_I_loc_loc}, \eqref{eq:est_I_res_res} and \eqref{eq:est_I_res_loc}, we prove the $k = 2$ case of \eqref{eq:step_1_conv}. For general $k$, we need to replace the two-fold integral in \eqref{eq:two_fold_int_tr_K^2} by a $k$-fold integral as in \cite[Theorem 3.9]{Simon05}, and then similarly decompose the $k$-fold integral into the sum of $2^k$ terms, such that each term is a $k$-fold iterated integral with each integral domain being $\Gamma_{\loc}$ or $\Gamma_{\res}$. Then the argument for the $k = 2$ case works for the general $k$ case.
\end{proof}

\begin{proof}[Proof of property \ref{enu:step_2}]
  We note that the integral operator $K_{M, t}$ is a trace class operator as well as a Hilbert--Schmidt operator, so we can write
  \begin{equation}
    \det(I - \lambda K_{M, t}) = \dettwo(1 - \lambda K_{M, t}) e^{\tr K_{M, t}},
  \end{equation}
  where $\dettwo$ is defined in \cite[Chapter 9]{Simon05}. First, by \cite[Theorem 3.9]{Simon05}, we have
  \begin{equation}
    \tr K_{M, t} = \frac{1}{2\pi i} \int_{\Gamma} K_{M, t}(w, w) dw.
  \end{equation}
  Thus as $M = n + l + 1$, $t = n - \tau\sqrt{n}$ and $n \to \infty$, we have, like the computation of $\tr((K_{M, t})^2)$ in the proof of property \ref{enu:step_1},
  \begin{equation}
    \begin{split}
      \tr K_{M, t} = {}& \frac{1}{2\pi i} \int_{\Gamma} \frac{\exp \left[ -n \left( f(w) - \frac{\tau}{\sqrt{n}} w \right) \right]}{(1 - q)w} g(w) dw \\
      = {}& \frac{1}{2\pi i} \int_{\Gamma_{\loc}} \frac{\exp \left[ -n \left( f(w) - \frac{\tau}{\sqrt{n}} w \right) \right]}{(1 - q)w} g(w) dw + o(e^{-c n^{1/5}}) \\
      = {}& \frac{1}{2\pi i} \int^{-1 - n^{1/10} i}_{-1 + n^{1/10}i} \frac{e^{u^2/2 + \tau u}}{(1 - q)u} \gamma(u) (1 + \bigO(n^{-1/5})) du + o(e^{-c n^{1/5}}) \\
      =  {}& \frac{1}{2\pi i} \int_{\Gamma_{\infty}} \frac{e^{u^2/2 + \tau u}}{(1 - q)u} \gamma(u) du + \bigO(n^{-1/5}) \\
      = {}& C + \bigO(n^{-1/5}),
    \end{split}
  \end{equation}
  where $c$ is the same as in \eqref{eq:Re_f_est_residual} and the $C$ is equal to the contour integral on $\Gamma_{\infty}$, and we do not compute it explicitly.

  On the other hand, by \cite[Theorem 9.2(b)]{Simon05}, we have
  \begin{equation}
    \lvert \dettwo(1 - \lambda K_{M, t}) \rvert \leq \exp( \lvert \lambda \rvert^2 \lVert K_{M, t} \rVert^2_2),
  \end{equation}
  where $\lVert \cdot \rVert_2$ is the Hilbert--Schmidt norm. (The constant $\Gamma_n$ in \cite[Theorem 9.2(b)]{Simon05} can be taken as $1$.) Thus we only need to show that $\lVert K_{M, t} \rVert_2$ is bounded as $M = n + l + 1$, $t = n - \tau\sqrt{n}$ and $n \to \infty$. To this end, we use \cite[Theorem 2.11]{Simon05}, and have
  \begin{equation}
    \lVert K_{M, t} \rVert^2_2 = \frac{1}{4\pi^2} \int_{\Gamma} \lvert dw' \rvert \int_{\Gamma} \lvert dw \rvert \left\lvert \frac{\varphi(w)}{qw' - w} g(w') \right\rvert^2.
  \end{equation}
  As $n \to \infty$, we have, by the concentration of $K_{M, t}(w, w')$ over $\Gamma$ around $0$,
  \begin{equation} \label{eq:est_K_Mt_HS_norm}
    \begin{split}
      \left\lVert K_{M, t} \right\rVert^2_2 = {}& \frac{1}{4\pi^2} \int_{\Gamma} \lvert dw' \rvert \int_{\Gamma} \lvert dw \rvert \left\lvert \frac{\exp \left[ -n \left( f(w) - \frac{\tau}{\sqrt{n}} w \right) \right]}{qw' - w} g(w) \right\rvert^2 \\
       = {}& \frac{1}{4\pi^2} \int_{\Gamma_{\loc}} \lvert dw' \rvert \int_{\Gamma_{\loc}} \lvert dw \rvert \left\lvert \frac{\exp \left[ -n \left( f(w) - \frac{\tau}{\sqrt{n}} w \right) \right]}{qw' - w} g(w) \right\rvert^2 + o(e^{-cn^{1/5}}) \\
       = {}& \frac{1}{4\pi^2} \int^{-1 - n^{1/10} i}_{-1 + n^{1/10} i} \lvert dv \rvert \int^{-1 - n^{1/10} i}_{-1 + n^{1/10} i} \lvert du \rvert \left\lvert \frac{e^{u^2/2 + \tau u}}{qv - u} \gamma(u) \right\rvert^2 (1 + \bigO(n^{-1/5})) + o(e^{-cn^{1/5}}) \\
       = {}& \frac{1}{4\pi^2} \int_{\Gamma_{\infty}} \lvert dv \rvert \int_{\Gamma_{\infty}} \lvert du \rvert \left\lvert \frac{e^{u^2/2 + \tau u}}{qv - u} \gamma(u) \right\rvert^2 + \bigO(n^{-1/5}).
    \end{split}
  \end{equation}
  We are not going to evaluate the integral in \eqref{eq:est_K_Mt_HS_norm} explicitly, but it is clear that $\lVert K_{M, t} \rVert^2_2$ is bounded as $M, t, b_k$ are specified in \eqref{eq:parameter_specification} and $n \to \infty$. Hence we finish the proof.
\end{proof}

\begin{remark} \label{rmk:technical}
  Our proof to Theorem \ref{theorem-step} is similar to the proof of \cite[Theorem 2]{Tracy-Widom09} that obtains analogous limiting distributions for the leftmost eigenvalues in the ASEP with step initial condition. The argument in our proof is simpler than that in \cite{Tracy-Widom09}, because our integral operator $K_{M, t}$ is on $L^2(\Gamma)$ such that $\Gamma$ encloses the pole $0$, while the counterpart integral operator $K_2$ defined in \cite[Section 2]{Tracy-Widom09} is defined on $L^2(\gamma)$ such that the contour $\gamma$ does not enclose the pole $0$. Analogously, one can compute the limiting distribution of $x_m(t)$ as both $m, t \to \infty$ by the method in the proof of \cite[Theorem 3]{Tracy-Widom09}, and some steps can be simplified since our contour $\Gamma$ is more friendly to asymptotic analysis than the $\gamma$ in \cite{Tracy-Widom09}.
\end{remark}

\begin{proof}[Proof of Corollary \ref{cor:TW_formula}]
  We only need to prove part \ref{enu:cor_TW_formula_a} by checking \eqref{eq:Mehler_kernel}, and then part \ref{enu:cor_TW_formula_b} follows as we plug the Fredholm determinants $\det(I + \zeta \K_{\tau; -})$ and $\det(I + \zeta \hat{K} \chi_{(\tau(1 + q)/(1 - q), \infty)})$ into the countour integral formulas for the $m$-th right-most particle in $q$-TAZRP (the right-hand side of our \eqref{eq:convergence_of_contour_integral_in_det_K}) and for the $m$-th left-most particle in ASEP (the right-hand side of \cite[Formula (2)]{Tracy-Widom09}) respectively.

  To check \eqref{eq:Mehler_kernel}, we note that for all $z, w \in \Gamma_{\infty}$, since $\Re(w - qz) = q - 1 < 0$,
  \begin{equation}
    \frac{1}{w - qz} = -\int^{\infty}_0 e^{(w - qz)\xi} d\xi,
  \end{equation}
  and then
  \begin{equation}
    \K_{\tau; -}(z, w) = -\int^{\infty}_0 e^{-qz \xi} e^{\frac{w^2}{2} + w(\xi + \tau)} d\xi.
  \end{equation}
  Hence we define the integral operators $A: L^2(\Gamma_{\infty}) \to L^2(\realR_+)$ and $B: L^2(\realR_+) \to L^2(\Gamma_{\infty})$, with kernels
  \begin{equation}
    A(\xi, w) = e^{\frac{w^2}{2} + w(\xi + \tau)}, \quad B(z, \xi) = e^{-qz \xi}.
  \end{equation}
  We have that $\K_{\tau; -} = -AB$. If we define $\tilde{\K}_{\tau; -} = -BA$, an operator from $L^2(\realR_+)$ to $L^2(\realR_+)$, then $\det(I + \K_{\tau; -}) = \det(I + \tilde{\K}_{\tau; -})$. Here $\tilde{\K}_{\tau; -}$ is an integral operator with kernel
  \begin{equation}
    \tilde{\K}_{\tau; -}(\xi, \eta) = -\frac{1}{2\pi i} \int_{\Gamma_{\infty}} e^{\frac{z^2}{2} + z(\xi + \tau)} e^{-qz \eta} dz = \frac{1}{\sqrt{2\pi}} e^{-\frac{1}{2} (\xi - q\eta + \tau)^2}.
  \end{equation}
  By change of variables, we hav3 that
  \begin{equation}
    \begin{split}
      \det(I + K_{\tau; -}) = {}& \det(I + \tilde{K}_{\tau; -}) \\
      = {}& \det \left( I - \chi_{(0, \infty)}(\xi) \frac{1}{\sqrt{2\pi}} e^{-\frac{1}{2} (\xi - q\eta + \tau)^2} \chi_{(0, \infty)}(\eta) \right) \\
      = {}& \det \left( I - \chi_{(\tau/(1 - q), \infty)}(\xi) \frac{1}{\sqrt{2\pi}} e^{-\frac{1}{2} (\xi - q\eta)^2} \chi_{(\tau/(1 - q), \infty)}(\eta) \right) \\
      = {}& \det \left( I - \chi_{(\tau(1 + q)/(1 - q), \infty)}(\xi) \frac{1}{\sqrt{2\pi}(1 + q)} e^{-\frac{(\xi - q\eta)^2}{2(1 + q)^2}} \chi_{(\tau(1 + q)/(1 - q), \infty)}(\eta) \right).
    \end{split}
  \end{equation}
  Noting that
  \begin{equation}
    \frac{1}{\sqrt{2\pi}(1 + q)} e^{-\frac{(\xi - q\eta)^2}{2}} = \frac{e^{(1 - q^2)\eta^2/4}}{e^{(1 - q^2)\xi^2/4}} \hat{K}(\xi, \eta),
  \end{equation}
  where $\hat{K}$ is defined in \eqref{eq:defn_hat_K}, and that Fredholm determinant is invariant under conjugation, we prove \eqref{eq:Mehler_kernel}.
\end{proof}
\appendix

\section{Limiting distributions of $x_m(t)$ as $m, t \to \infty$ and $t/m \to c$} \label{sec:TW_limit}

We first fix some notations following \cite{Barraquand15}.
Let
\begin{equation}
  \Psi_q(z) = \frac{\partial}{\partial z} \log \Gamma_q(z), \quad \text{where} \quad \Gamma_q(z) = (1 - q)^{1 - z} \frac{(q; q)_{\infty}}{(q^z; q)_{\infty}}.
\end{equation}
Then for $\theta > 0$, let
\begin{align}
  \kappa  = \kappa(q, \theta) = {}& \frac{\Psi'_q(\theta)}{(\log q)^2 q^{\theta}}, \\
  f = f(q, \theta) = {}& \frac{\Psi'_q(\theta)}{(\log q)^2} - \frac{\Psi_q(\theta)}{\log q} - \frac{\log(1 - q)}{\log q}, \\
  \chi = \chi(q, \theta) = {}& \frac{1}{2}(\Psi'_q(\theta) \log q - \Psi''_q(\theta)).
\end{align}
Furthermore, fix $\alpha \in (0, 1]$, we define
\begin{align}
  g = g(q, \theta) = {}& \frac{\Psi'_q(\theta)}{(\log q)^2} \frac{\alpha}{q^{\theta}} - \frac{\Psi_q(\log_q \alpha)}{\log q} - \frac{\log(1 - q)}{\log q}, \\
  \sigma = \sigma(q, \theta) = {}& \Psi'_q(\theta) \frac{\alpha}{q^{\theta}} - \Psi'_q(\log_q \alpha).
\end{align}
We note that as $\theta$ runs over $(0, +\infty)$, $\chi(q, \theta) > 0$, and $\kappa(q, \theta)$ decreases monotonically from $+\infty$ to $(1 - q)^{-1}$. Recall that in \eqref{eq:relation_q_TASEP_q_TAZRP_prob_distr} we introduced the notation $\Prob^{\qTASEP}_{\step}(y_n(t) \geq x)$ to represent the distribution of the $n$-th particle in the $q$-TASEP with step initial condition.
\begin{proposition} \cite[Theorem 1]{Barraquand15} \label{prop:Barraquand}
  Suppose $b_k = 1$ for all $k > m$, $\alpha = \min(b_1, \dotsc, b_m) \in (0, 1]$, and $k$ out of $b_1, \dotsc, b_m$ are equal to $\alpha$. Let $c \in \realR$ be a constant.
  \begin{enumerate}
  \item
    For $\theta \in (\log_q \alpha, +\infty)$, we have
    \begin{multline}
      \lim_{n \to \infty} \Prob^{\qTASEP}_{\step} \left( y_n \left( (1 - q)\kappa n + (1 - q) q^{-\theta} c n^{2/3} \right) \vphantom{(f - 1)n + cn^{2/3} - c^2 \frac{(\log q)^3}{4\chi} n^{1/3} + \frac{\chi^{1/3}}{\log q} \xi n^{1/3}} \right. \\
      \geq \left. (f - 1)n + cn^{2/3} - c^2 \frac{(\log q)^3}{4\chi} n^{1/3} + \frac{\chi^{1/3}}{\log q} \xi n^{1/3} \right) = F_{\GUE}(\xi),
    \end{multline}
  \item
    For $\theta = \log_q \alpha$, we have
    \begin{multline}
      \lim_{n \to \infty} \Prob^{\qTASEP}_{\step} \left( y_n \left( (1 - q)\kappa n + (1 - q) q^{-\theta} c n^{2/3} \right) \vphantom{(f - 1)n + cn^{2/3} - c^2 \frac{(\log q)^3}{4\chi} n^{1/3} + \frac{\chi^{1/3}}{\log q} \xi n^{1/3}} \right. \\
      \geq \left. (f - 1)n + cn^{2/3} - c^2 \frac{(\log q)^3}{4\chi} n^{1/3} + \frac{\chi^{1/3}}{\log q} \xi n^{1/3} \right) = F_{\BBP, k, \mathbf{b}}(\xi),
    \end{multline}
    where $\mathbf{b} = (b, \dotsc, b)$ with $b = c(\log q)^2/(2\chi^{2/3})$.
  \item
    If $\theta \in (0, \log_q \alpha)$, we have
    \begin{multline}
      \lim_{n \to \infty} \Prob^{\qTASEP}_{\step} \left( y_n \left( (1 - q)\kappa n + (1 - q) \alpha^{-1} c n^{1/2} \right) \geq (g - 1)n + c n^{1/2} + \frac{\sigma^{1/2}}{\log q} \xi n^{1/3} \right)  \\
      = G_k(\xi).
    \end{multline}
  \end{enumerate}
  Here $F_{\GUE}$ is the GUE Tracy--Widom distribution, $F_{\BBP, k, \mathbf{b}}$ is the BBP (Baik--Ben Arous--\Peche) distribution with rank $k$, and $G_k$ is the distribution of the largest eigenvalues of a $k \times k$ GUE. Their precise formulas are given in \cite[Definition 3]{Barraquand15}. The phase transition of the three distributions in \cite{Barraquand15} was studied first in the spiked Wishart ensemble in random matrix theory \cite{Baik-Ben_Arous-Peche05}.
\end{proposition}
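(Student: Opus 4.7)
The plan is to adapt the standard steepest-descent strategy on a Fredholm determinant representation of $\Prob^{\qTASEP}_{\step}(y_n(t) \geq x)$, following the pattern used for the unperturbed $q$-TASEP in \cite{Ferrari-Veto15} but now carefully tracking the contribution of the $k$ spiked particles with slow jump rate $\alpha$. The starting point would be the Cauchy-type Fredholm determinant formula from \cite{Borodin-Corwin-Sasamoto14}, extended to spiked parameters via the Macdonald / $q$-Whittaker machinery of \cite{Borodin-Corwin13}: after a judicious choice of contours, this expresses an $e_q$-Laplace transform of $q^{y_n(t)}$ as $\det(I+K)$, where $K$ carries an exponential factor $\exp[t\,S(z;\theta)]$ and a rational factor that absorbs the spike locations once the bulk $b_k = 1$ part of the product is moved into the action.

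Next, I would invert the $e_q$-Laplace transform via the standard contour integral, paralleling the passage around \eqref{eq:step_init_contour_proof} in this paper, to obtain a contour integral of a Fredholm determinant with kernel $K_{n,\theta}$. The remaining work is a saddle-point analysis. The critical point $z_c(\theta)$ of $S(z;\theta)$, determined by $S'(z_c;\theta)=0$, depends monotonically on $\theta$, and the three regimes of the theorem correspond exactly to the three possible relative positions of $z_c$ with respect to the spike location determined by $\alpha$ (equivalently, to $\theta$ being above, equal to, or below $\log_q\alpha$). In the subcritical regime $\theta > \log_q\alpha$, the spikes lie safely away from $z_c$, and the $n^{1/3}$ local rescaling near $z_c$ produces, up to prefactors that soak into the $\chi^{1/3}/\log q$ normalization, the Airy kernel; hence $F_{\GUE}$ appears in the limit.

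In the critical regime $\theta = \log_q\alpha$, the $k$ spike poles collide with the saddle, and the $n^{1/3}$ rescaling keeps them at order-one distance from the rescaled saddle point. The resulting finite-rank rational factor perturbs the Airy-type kernel into precisely the $\BBP$ kernel of rank $k$ with parameters $\mathbf{b}$ given by the shift $c(\log q)^2/(2\chi^{2/3})$. In the supercritical regime $\theta < \log_q\alpha$ the saddle has moved past the spikes, so before any local analysis one must deform the $z$- and $w$-contours across the $k$ spike poles and collect their residues. The deformed bulk contour yields a Gaussian (scale $n^{1/2}$) contribution that is subdominant, while the residues assemble into a $k\times k$ determinant that is the CDF $G_k$ of the top eigenvalue of a $k\times k$ $\GUE$, by the standard Harish-Chandra--Itzykson--Zuber reduction that also underlies the spiked Wishart calculation of Baik--Ben Arous--P\'ech\'e \cite{Baik-Ben_Arous-Peche05}.

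The main technical obstacle will be the two-step argument needed to upgrade pointwise kernel convergence to Fredholm-determinant convergence: an exponential decay estimate away from a small neighbourhood of $z_c$ (to dispose of the $\res$ portion of the contour) and a uniform Hilbert--Schmidt bound on $K_{n,\theta}$ restricted to the local window (to control the $\loc$ portion), entirely in the spirit of our proof of Theorem \ref{thm:asymp}. An additional subtle point in the supercritical case is to verify that, after residue collection, what remains of the Fredholm determinant on the deformed contour contributes only $1+o(1)$, so that the finite-dimensional $k\times k$ determinant is genuinely the leading term; keeping track of the signs and the ordering of residues picked up when pushing the two contours past the spikes is where most of the bookkeeping lives.
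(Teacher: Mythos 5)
This proposition is not proved in the paper at all: it is quoted verbatim from \cite[Theorem 1]{Barraquand15} and used as an input (via the relation \eqref{eq:relation_q_TASEP_q_TAZRP_prob_distr}) to deduce the corollary that follows it in Appendix \ref{sec:TW_limit}. So there is no in-paper proof to compare against; what you have written is an outline of how the cited reference itself establishes the result. As such an outline it is broadly faithful: the starting point is indeed the Fredholm determinant formula for the $e_q$-Laplace transform of $q^{y_n(t)+n}$ from \cite{Borodin-Corwin13} and \cite{Borodin-Corwin-Sasamoto14}, the three regimes do correspond to the position of the saddle point relative to the pole contributed by the $k$ slow particles, and the limiting kernels (Airy, BBP, and the $k\times k$ GUE reduction) arise exactly as you describe.

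Two caveats. First, the passage from the $e_q$-Laplace transform to the probability in \cite{Barraquand15} and \cite{Ferrari-Veto15} is not an exact contour-integral inversion in the style of \eqref{eq:step_init_contour_proof}; it exploits the fact that $1/((\zeta q^{y_n+n};q)_\infty)$ converges to an indicator function under the scaling $\zeta=-q^{-fn-\cdots}$, so the $e_q$-Laplace transform itself converges to the distribution function. Your proposed route is not wrong in principle, but it is a different (and less standard) mechanism than the one actually used, and you would need to justify exchanging the inversion integral with the $n\to\infty$ limit. Second, your description of the supercritical regime $\theta<\log_q\alpha$ is imprecise: the deformed bulk contour does not give a ``subdominant Gaussian'' term --- its Fredholm determinant contributes $1+o(1)$ because the bulk fluctuations live at a strictly smaller location on scale $n^{1/3}$, while the residues at the $k$ spike poles carry the entire $n^{1/2}$-scale fluctuation and assemble into the $k$-fold integral representation of $G_k$ (this is a direct identification, not an HCIZ computation). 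In all three regimes the proposal remains a roadmap rather than a proof; the uniform tail and Hilbert--Schmidt estimates you flag at the end are precisely where the work of \cite{Barraquand15} lies, and since the present paper simply cites that work, reproducing them here would be redundant.
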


Hence by relation \eqref{eq:relation_q_TASEP_q_TAZRP_prob_distr}, we have the corresponding results for the $q$-TAZRP with step initial condition.
\begin{corollary}
  Suppose $b_k = 1$ for all $k > m$, $\alpha = \min(b_1, \dotsc, b_m) \in (0, 1]$, and $k$ out of $b_1, \dotsc, b_m$ are equal to $\alpha$. Let $c \in \realR$ be a constant.
  \begin{enumerate}
  \item
    For $\theta \in (\log_q \alpha, +\infty)$, we have
    \begin{multline}
      \lim_{n \to \infty} \Prob_{0^{\infty}} \left( x_{\left\lceil (f - 1)n + cn^{2/3} - c^2 \frac{(\log q)^3}{4\chi} n^{1/3} + \frac{\chi^{1/3}}{\log q} \xi n^{1/3} \right\rceil} \left( (1 - q)\kappa n + (1 - q) q^{-\theta} c n^{2/3} \right) > n \right) \\
      = F_{\GUE}(\xi),
    \end{multline}
  \item
    For $\theta = \log_q \alpha$, we have
  \begin{multline}
    \lim_{n \to \infty} \Prob_{0^{\infty}} \left( y_{\left\lceil fn + cn^{2/3} - c^2 \frac{(\log q)^3}{4\chi} n^{1/3} + \frac{\chi^{1/3}}{\log q} \xi n^{1/3} \right\rceil} \left( (1 - q)\kappa n + (1 - q) q^{-\theta} c n^{2/3} \right) > n \right) \\
    = F_{\BBP, k, \mathbf{b}}(\xi),
  \end{multline}
  where $\mathbf{b} = (b, \dotsc, b)$ with $b = c(\log q)^2/(2\chi^{2/3})$.
\item
  If $\theta \in (0, \log_q \alpha)$, we have
\begin{equation}
  \lim_{n \to \infty} \Prob_{0^{\infty}} \left( y_{\left\lceil gn + c n^{1/2} + \frac{\sigma^{1/2}}{\log q} \xi n^{1/3} \right\rceil} \left( (1 - q)\kappa n + (1 - q) \alpha^{-1} c n^{1/2} \right) > n \right) = G_k(\xi).
\end{equation}
  \end{enumerate}
\end{corollary}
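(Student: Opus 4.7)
The plan is to obtain all three statements as immediate consequences of two already-available ingredients: the particle-spacing duality \eqref{eq:relation_q_TASEP_q_TAZRP_prob_distr}, which asserts
\[
  \Prob_{0^{\infty}}(x_m(t) > x) = \Prob^{\qTASEP}_{\step}(y_x(t) \geq m - x),
\]
and Barraquand's limit theorem for the $q$-TASEP with finitely many slow particles (Proposition \ref{prop:Barraquand}). The idea is to specialize the duality by fixing the position threshold at $x = n$, and then translate each asymptotic statement about the $n$-th $q$-TASEP particle $y_n(t)$ in Proposition \ref{prop:Barraquand} into an asymptotic about the label $m$ of the tagged $q$-TAZRP particle whose position exceeds $n$.

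Concretely, I would proceed case by case. For each of the three regimes $\theta > \log_q \alpha$, $\theta = \log_q \alpha$, and $\theta \in (0, \log_q \alpha)$, I substitute $x = n$ into \eqref{eq:relation_q_TASEP_q_TAZRP_prob_distr} to replace the event $\{x_m(t) > n\}$ by $\{y_n(t) \geq m - n\}$. The time scalings $(1-q)\kappa n + (1-q)q^{-\theta} c n^{2/3}$ (respectively $(1-q)\kappa n + (1-q)\alpha^{-1} c n^{1/2}$) appearing in Proposition \ref{prop:Barraquand} already agree with those appearing in each item of the corollary, so nothing needs to be done on the time side. Matching $m - n$ with the explicit right-hand side in Barraquand's inequality then determines the label index $m$ as the affine function of $n$, $c$, and $\xi$ displayed in the corresponding item of the corollary, with the fluctuation parameter $\xi$ living inside the $n^{1/3}$ term of $m$. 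A ceiling is inserted to make $m$ an integer; because the limit distributions $F_{\GUE}$, $F_{\BBP, k, \mathbf{b}}$, and $G_k$ are all continuous and the rounding error is $O(1)$ while the fluctuation scale is $n^{1/3}$, the ceiling has no effect in the limit.

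The only obstacle is arithmetic bookkeeping: one has to keep careful track of the shift between $m$ and $m - n$ when reading Barraquand's statement off the right-hand side of \eqref{eq:relation_q_TASEP_q_TAZRP_prob_distr}, and verify that the fluctuation parameter $\xi$ enters on the $q$-TAZRP side through the particle label $m$ rather than through the position threshold $n$ or the time. Since no additional asymptotic analysis is needed beyond what is already supplied by Proposition \ref{prop:Barraquand}, the proof reduces to this translation and a remark on continuity of the limit laws.
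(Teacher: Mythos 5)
Your proposal is correct and is exactly the paper's argument: the corollary is stated there as an immediate consequence of the duality \eqref{eq:relation_q_TASEP_q_TAZRP_prob_distr} with $x = n$ applied to Proposition \ref{prop:Barraquand}, with the ceiling absorbed by continuity of the limit laws, just as you describe. Your careful bookkeeping of the shift $m = n + (m-n)$ in fact shows that the index in part 1 of the corollary should read $fn + \dotsb$ rather than $(f-1)n + \dotsb$ (consistent with parts 2 and 3), so your version is the corrected one.
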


% \bibliographystyle{abbrv}
% \bibliography{../../bibliography/bibliography.bib}

\end{document}